\newtheorem{dummy1}{dummy1}[section]
\newtheorem{dummy}{dummy}[subsection]
\newtheorem{lemma}[dummy]{Lemma}
\newtheorem{theorem}[dummy]{Theorem}
\newtheorem{corollary}[dummy]{Corollary}
\newtheorem{proposition}[dummy]{Proposition}
\newtheorem*{proposition*}{Proposition}
\newtheorem*{corollary*}{Corollary}
\newtheorem*{theorem*}{Theorem}
\theoremstyle{definition}
\newtheorem{definition}[dummy]{Definition}
\newtheorem{aremark}[dummy]{Remark}
\newtheorem{theorem1}[dummy1]{Theorem}
\newtheorem{lemma1}[dummy1]{Lemma}
\newtheorem{proposition1}[dummy1]{Proposition}
\newtheorem{remark1}[dummy1]{Remark}
\newtheorem{definition1}[dummy1]{Definition}
\begin{document}
\bibliographystyle{plain}
\title{Tropicalization of the Mirror Curve near the Large Radius Limit}
\author{Zhaoxing Gu}
\date{December 2022}
\maketitle
\begin{center}
\section*{Abstract}    
\end{center} 
The mirror of a toric orbifold is an affine curve called the mirror curve. In this paper, firstly, we recall the basic tools in tropical geometry and give a definition of the mirror curve. Then we calculate the tropical spine of the mirror curve for a smooth toric Calabi-Yau 3-fold near the large radius limit. Finally, we recall a special kind of real algebraic curves called the cyclic-M curve and show that under some special choices of parameters near the large radius limit, the mirror curve is a cyclic-M curve. Applying Mikhalkin's result\cite{mikhalkin2000real} of cyclic-M curves, we show that the mirror curve is glued from tubes and pairs of pants. 

\tableofcontents

\begin{center}
\item \section{Introduction}    
\end{center}
\subsection{Backgrounds of the mirror curve}
The mirror of a toric orbifold is an affine curve called the mirror curve. The mirror curve was studied in mirror symmetry by Hori and Vafa for toric Calabi-Yau 3-folds. Let $(X,\omega)$ be a symplectic toric Calabi-Yau 3-fold. There exist three types of mirrors for $(X,\omega)$: Landau-Ginzburg (Givental) mirror, Hori-Vafa mirror, and the mirror curve \cite{hori2000mirror}\cite{givental1995homological}.

For the toric Calabi-Yau 3-fold $(X,\omega)$, the mirror B-model is a 3-dimensional Landau-Ginzburg model on $(\mathbb{C^*})^3$ given by the superpotential $$W=H(x,y,q)z.$$
Here $H(x,y,q)=0$ is an affine curve with the parameter $q$ related to the Kähler parameter of $X$ given by the mirror map. Another mirror, Hori-Vafa mirror of $(X,\omega)$ is a non-compact Calabi-Yau 3-fold $\Tilde{X}$ with a holomorphic 3-form $\Omega$ on $\Tilde{X}$, where $\Tilde{X}$ is a hypersurface in $\mathbb{C}^2\times(\mathbb{C^*})^2$, given by $$\Tilde{X}=\{(u,v,x,y)\in\mathbb{C}^2\times(\mathbb{C^*})^2|uv=H(x,y)\},$$ and $$\Omega=Res_{\Tilde{X}}(\frac{1}{uv-H(x,y)}du\wedge dv\wedge \frac{dx}{x}\wedge\frac{dy}{y}).$$ These two types of mirrors of a toric Calabi-Yau 3-fold $(X,\omega)$ could be reduced to an affine curve $$C_q=\{(x,y)\in (\mathbb{C^*})^2|H(x,y,q)=0\}$$ with the Kähler parameter $q$. We define the curve by the mirror curve. 

The mirror curve could be used for the prediction of open-closed Gromov-Witten invariants. For example, in \cite{eynard2007invariants}, the Eynard-Orantin topological recursion provides an algorithm for calculating higher genus invariants for a spectral curve. We can relate the Eynard-Orantin invariants $\omega_{g,n}$ of the mirror curve to all genus open-closed Gromov-Witten invariants of the symplectic toric Calabi-Yau 3-fold $(X,\omega)$ by Bouchard-Klemm-Mari\~no-Pasquetti (BKMP) Remodeling Conjecture \cite{bouchard2009remodeling}\cite{bouchard2010topological}\cite{hori2000mirror}\cite{fang2020remodeling}\cite{eynard2007invariants}.

Besides, the statement of a version of homological mirror symmetry for a toric Calabi-Yau 3-fold is formulated via the mirror curve. Specifically, for a toric Calabi-Yau 3-fold, the B-model is the matrix factorization, while the A-model is a Fukaya-type category on its mirror curve \cite{pascaleff2019topological} \cite{abouzaid2013homological}\cite{lee2016homological}\cite{bai2022topological}.

\subsection{Main results} For a simplicial toric Calabi-Yau 3-fold $X_{\Sigma}$ defined by a 3-dim fan $\Sigma\subset N\cong \mathbb{Z}^3$, all the generators of 1-dim cones lie in a hyperplane $N'\subset N$, so a generator corresponds to a lattice point in $N'$. Thus $\Sigma$ gives a triangulation of $\Delta$, where $\Delta$ is the convex hull of these lattice points given by 1-dim cones. We define such triangulation by $T_{\Sigma}$. The first result is that the tropicalization of the mirror curve near the large radius limit gives a subdivision of $\mathbb{R}^2$. This subdivision is exactly dual to the triangulation $T_{\Sigma}$.

\begin{theorem*}[Theorem \ref{thm:subdivision-triangulation}]
The subdivision $T$ given by the tropical spine is dual to the triangulation $T_{\Sigma}$ of $\Sigma$ near the large radius limit.
\end{theorem*}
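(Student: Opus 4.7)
The plan is to exploit the standard duality between a tropical plane curve and the regular subdivision of its Newton polygon that it induces, and then to verify that near the large radius limit this induced subdivision coincides with $T_{\Sigma}$. To set up, I would first write the mirror curve polynomial explicitly. Since $(X_\Sigma,\omega)$ is a toric Calabi--Yau $3$-fold, the primitive generators $b_i$ of the $1$-dim cones of $\Sigma$ all lie on an affine hyperplane $N' \subset N$; identifying $N'$ with $\mathbb{Z}^2$, each becomes a lattice point $(m_i, n_i) \in \Delta$, and
\[
H(x,y,q) \;=\; \sum_{i} c_i \, q^{\ell_i}\, x^{m_i} y^{n_i},
\]
with $c_i \in \mathbb{C}^{*}$ and exponents $\ell_i$ determined by the mirror map (normalized so that $\ell_i = 0$ on the vertices of $\Delta$ corresponding to the non-compact toric divisors).

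The next step is a standard tropicalization. Setting $t = -\log|q|$ large (the large radius limit) and passing to logarithmic coordinates $(X,Y) = (\log|x|, \log|y|)$, the tropical polynomial is
\[
\operatorname{trop}(H)(X,Y) \;=\; \min_{i}\bigl( \ell_i t + m_i X + n_i Y \bigr),
\]
and the tropical spine $\Gamma$ is its corner locus. By the fundamental duality theorem in tropical geometry, the subdivision $T$ of $\mathbb{R}^2$ cut out by $\Gamma$ is combinatorially dual to the regular subdivision $\widetilde{T}$ of $\Delta$ obtained by projecting onto $\Delta$ the lower convex hull of the lifted points $\{(m_i,n_i,\ell_i)\}$. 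It therefore suffices to show $\widetilde{T} = T_\Sigma$.

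This identification is the main obstacle, and it is where the large radius hypothesis is really used. I would argue as follows: the exponents $\ell_i$ coming from the mirror map can be written as nonnegative integer linear combinations $\ell_i = \sum_a \langle D_i, C_a\rangle \,(-\log q_a)$, where the $q_a$ are Kähler parameters dual to a chosen basis $\{C_a\}$ of the Mori cone of $X_\Sigma$ and $D_i$ is the toric divisor associated with $b_i$. The defining property of the large radius chamber in the Kähler moduli is that the piecewise linear function on $|\Sigma|$ with values $\ell_i$ at $b_i$ is strictly convex, with breaks exactly along the $2$-dimensional cones of $\Sigma$. By the standard correspondence between strictly convex support functions on a simplicial fan and regular triangulations of its section, this forces the lower convex hull of $\{(m_i,n_i,\ell_i)\}$ to project onto precisely the triangulation $T_\Sigma$. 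Combined with the tropical duality from the previous step, this yields the desired duality between $T$ and $T_\Sigma$: $2$-cells of $T$ correspond to lattice points of $T_\Sigma$, bounded edges of $T$ to interior edges of $T_\Sigma$, unbounded rays of $T$ to boundary edges of $T_\Sigma$, and vertices of $T$ to $2$-simplices of $T_\Sigma$, as claimed.
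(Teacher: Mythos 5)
Your route is genuinely different from the paper's. The paper computes the constants $\gamma_i$ of the tropical spine directly from the Ronkin function, proves the bound $|\gamma_i-\log|a_i(q)||<1$, and then checks by explicit inequalities (using Propositions \ref{p2.5.3} and \ref{p2.5.4}) that the cell $\sigma^*_{i_1,i_2}$ is non-empty exactly when $b_{i_1}$ and $b_{i_2}$ span a $1$-cell of $T_{\Sigma}$. You instead invoke the general duality between a tropical curve and the regular subdivision induced by the lifted points $(m_i,n_i,\ell_i)$, and then identify that regular subdivision with $T_{\Sigma}$ via strict convexity of the support function attached to a K\"ahler class. This is a cleaner, more conceptual packaging, and the paper's Propositions \ref{p2.5.3} and \ref{p2.5.4} are precisely the concrete form of your strict-convexity claim (a ratio of coefficients being a non-constant monomial of $q$ is exactly the statement that the piecewise-linear function is strictly convex across the corresponding edge), so that part of your argument can be matched to the paper's setup, provided you also verify that the class $\sum_j(-\log q_j)H_j$ built from the Nef-cone basis $\{H_i\}$ really lands in the interior, which is where the non-negativity and non-degeneracy of the exponents $s^{\sigma}_{i,j}$ enter.

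There is, however, one genuine gap, at your very first step: the tropical spine of Theorem \ref{thm:subdivision-triangulation} is \emph{not} the corner locus of $\min_i(\ell_i t+m_iX+n_iY)$. By Lemma \ref{l3.1} it is the corner locus of the piecewise-linear approximation of the Ronkin function, whose affine pieces have constants $\gamma_i=\log|a_i(q)|+\mathrm{Re}\bigl(\sum_{k}\cdots\bigr)$, i.e.\ the naive value $-\ell_i t$ plus a correction coming from the higher terms of the Ronkin integral; you identify the two objects without comment. To close this you must (i) show the correction is bounded uniformly as $q\to 0$ (the paper does this via the absolute-convergence estimate yielding $|\gamma_i-\log|a_i(q)||<1$), and (ii) argue that an $O(1)$ perturbation of the constants does not change the combinatorial type of the induced subdivision --- which is true only because the strict-convexity margins in your final step scale linearly in $t$ and therefore eventually dominate the bounded correction. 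Without these two points you have established the duality for the naive tropicalization of $H$, not for the tropical spine that the theorem is actually about.
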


Besides, we recall propositions of a special kind of real algebraic curves determined by some Laurent polynomials namely cyclic-M curves in a real toric surface. This kind of curve has the maximal number of connected components in the corresponding real toric surface and intersects the axes of the toric surface in a cyclic order. Here the maximal number of connected components equals one plus the number of lattice points inside the Newton polytope of Laurent polynomial by Harnack's inequality\cite{harnack1876ueber}. We give examples of a non-cyclic-M curve and a cyclic-M curve in Figure \ref{f1} and Figure \ref{f2}.

\begin{figure}[htbp]
\centering
\begin{minipage}[t]{0.48\textwidth}
\centering
\includegraphics[width=6cm]{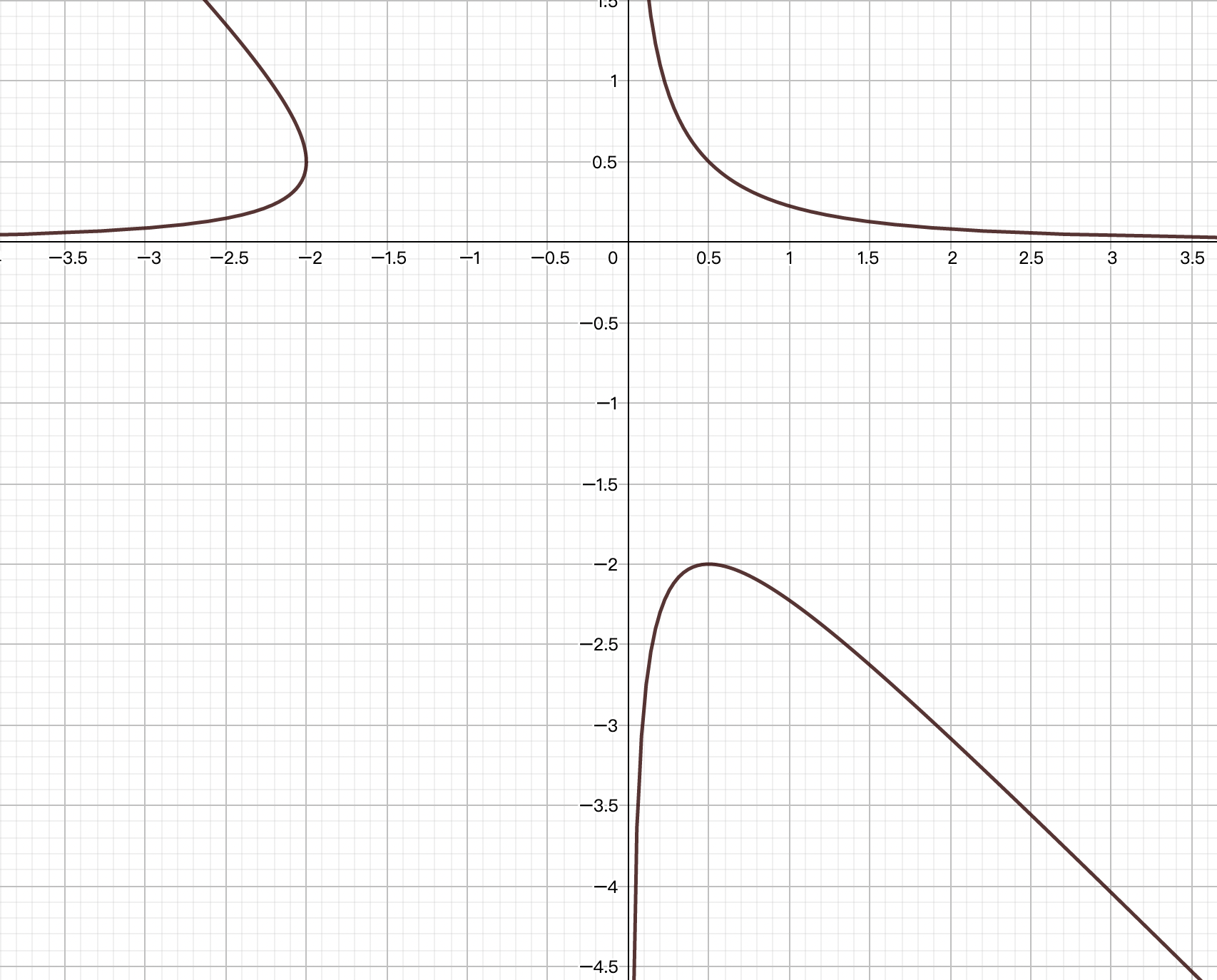}
\caption{Non-cyclic-M curve: $1+x+y-0.5x^{-1}y^{-1}=0$}
\label{f1}
\end{minipage}
\begin{minipage}[t]{0.48\textwidth}
\centering
\includegraphics[width=6cm]{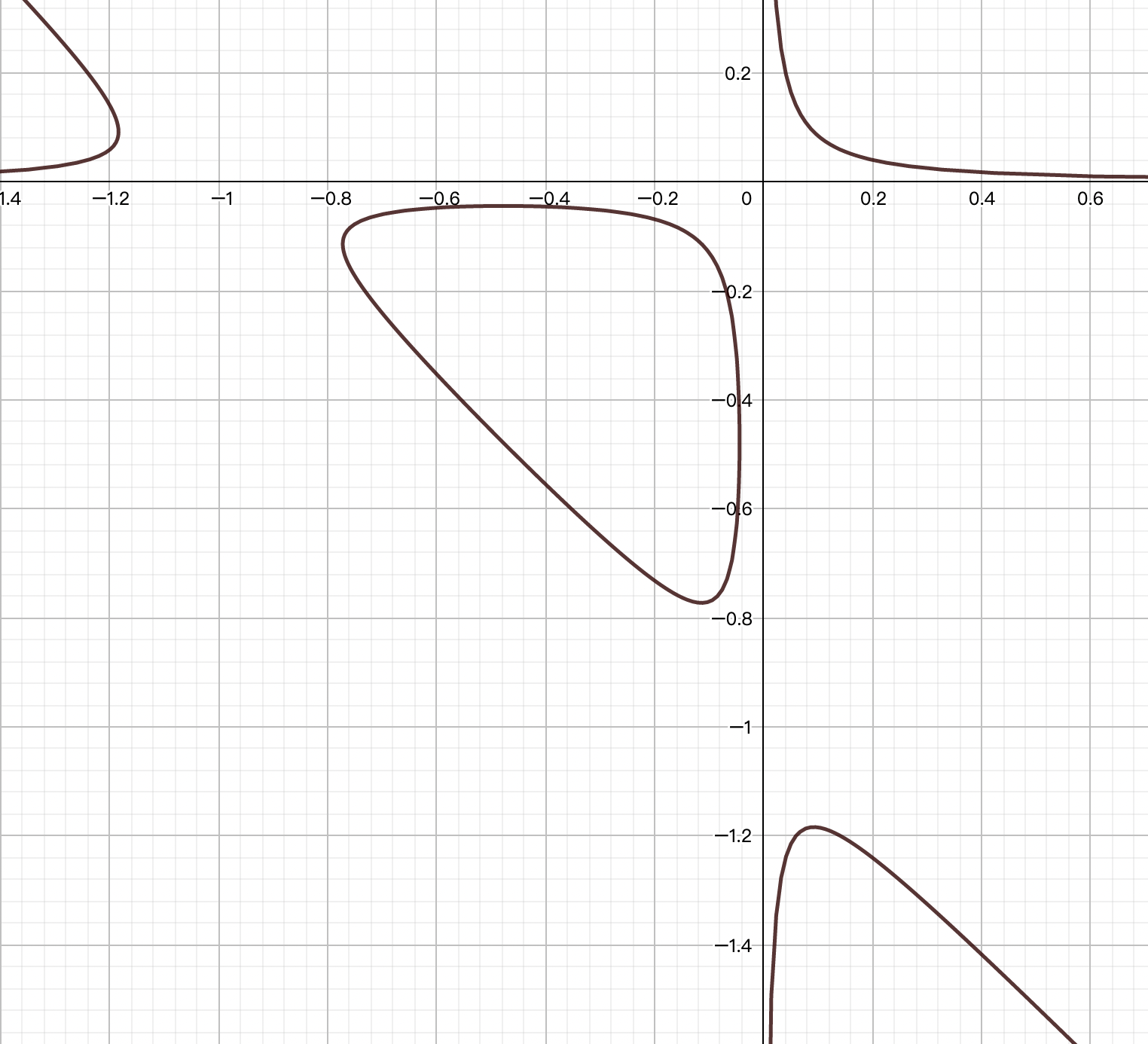}
\caption{cyclic-M curve: $1+x+y-0.01x^{-1}y^{-1}=0$}
\label{f2}
\end{minipage}
\end{figure}

Mikhalkin has studied cyclic-M curves in manners of tropical geometry. In \cite{mikhalkin2000real}, he has proved the following statements about cyclic-M curves:

\begin{proposition*}[Proposition \ref{prop:F=RA}]
Let $p$ be a Laurent polynomial with real coefficients and $A=\{(x,y)\in(\mathbb{C^*})^2|p(x,y)=0\}$. If $\mathbb{R}A=A\cap(\mathbb{R^*})^2$ is a cyclic-M curve, we define $F$ to be the locus of critical points of $\mu|_A$, where $\mu$ is the amoeba map. Then we have $F=\mathbb{R}A$.   
\end{proposition*}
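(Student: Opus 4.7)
The plan is to prove the two inclusions $\mathbb{R}A \subseteq F$ and $F \subseteq \mathbb{R}A$ separately. The first is a local linear algebra check at a real point, while the second uses the cyclic-M hypothesis in an essential topological way, most naturally via the logarithmic Gauss map.

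For $\mathbb{R}A \subseteq F$, I would work in logarithmic coordinates $u = \log x$, $v = \log y$ on $(\mathbb{C}^*)^2$, in which the amoeba map is $\mu(u,v) = (\Re u, \Re v)$ with $\ker d\mu = i\mathbb{R}^2 \subset \mathbb{C}^2$. At any point $(x_0,y_0) \in \mathbb{R}A$ the partials $\partial_u p$ and $\partial_v p$ are real, so the complex tangent line $T_{(x_0,y_0)}A \subset \mathbb{C}^2$ is spanned by a real vector $v_0 = (-y_0 \partial_y p, x_0 \partial_x p)$. The restriction $d\mu|_{TA}$ then sends $\alpha v_0 \mapsto (\Re\alpha)\, v_0$, so its real $2\times 2$ matrix has rank $1$ with kernel $i\mathbb{R} v_0$. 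Thus $(x_0,y_0)$ is critical.

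For $F \subseteq \mathbb{R}A$, the computation above shows more: a point is in $F$ iff its tangent direction lies in the real projective line, i.e.\ $F = \gamma^{-1}(\mathbb{RP}^1)$ where $\gamma\colon A \to \mathbb{CP}^1$, $\gamma(x,y) = [x\partial_x p : y\partial_y p]$ is the logarithmic Gauss map. Because $p$ has real coefficients, $\gamma$ intertwines the real structures, so $F$ is invariant under complex conjugation $\sigma$. I then pass to a smooth toric compactification $\bar A$ with Newton polygon $\Delta$: by Khovanskii's formula $\bar A$ has genus $g$ equal to the number of interior lattice points of $\Delta$, and cyclic-M saturates Harnack's bound $b_0(\mathbb{R}\bar A) = g+1$. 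Klein's theorem then forces $\bar A$ to be of type~I, so $\bar A \setminus \mathbb{R}\bar A$ consists of exactly two planar halves $\bar A^{\pm}$ (genus $0$ with $g+1$ boundary circles) interchanged by $\sigma$, and it remains to show $F \cap \bar A^+ = \varnothing$.

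The final component-count is the main obstacle. The strategy I would follow is a Riemann--Hurwitz / degree argument for the restriction of $\gamma$ to $\bar A^+$, viewing it as a holomorphic map to the closed upper half-plane with boundary $\mathbb{RP}^1$: Kouchnirenko's formula supplies $\deg\gamma$, while the cyclic arrangement of $\mathbb{R}A$ on the toric boundary shows that $\gamma|_{\mathbb{R}\bar A}$ already wraps around $\mathbb{RP}^1$ with total multiplicity equal to $\deg\gamma$. This exhausts the available degree, so $\gamma|_{\bar A^+}$ cannot take real values away from the boundary, forcing $F \cap \bar A^+ = \varnothing$ and hence $F = \mathbb{R}\bar A$; restricting back to $(\mathbb{C}^*)^2$ gives $F = \mathbb{R}A$. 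The delicate points are controlling the branch behaviour of $\gamma$ at the torus boundary of $\bar A$ and verifying that the real winding count really is maximal under the cyclic-M hypothesis; these are exactly the places where the cyclic order of intersections with the toric axes, rather than merely the component count, is used.
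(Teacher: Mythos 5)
The paper does not actually prove this proposition: it is quoted from Mikhalkin, and the text explicitly defers to \cite{mikhalkin2000real} for the proof. Your outline is, in substance, a reconstruction of Mikhalkin's own argument, so there is no genuinely different route to compare; what can be assessed is how much of that argument you have carried out. The inclusion $\mathbb{R}A\subseteq F$ is complete and correct: in logarithmic coordinates the tangent line of $A$ at a real point is spanned by the real vector $(-y_0\partial_y p,\,x_0\partial_x p)$, and $d\mu$ restricted to a complex line spanned by a real vector has real rank $1$. Equally correct are the reformulation $F=\gamma^{-1}(\mathbb{RP}^1)$ for the logarithmic Gauss map $\gamma=[x\partial_x p:y\partial_y p]$, the conjugation-invariance of $F$, and the topological setup: Harnack saturation forces type I by Klein's theorem, the two halves $\bar A^{\pm}$ are planar, and it suffices to show $F\cap\bar A^{+}=\varnothing$.

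The gap is that the one step which actually uses the cyclic-M hypothesis --- the claim that $\gamma|_{\mathbb{R}\bar A}$ already exhausts the full degree of $\gamma$ --- is announced rather than proved, and you flag this yourself. Concretely one must (i) compute $\deg\gamma$ for a curve with Newton polygon $\Delta_p$ (Bernstein/Kouchnirenko), (ii) show each of the $g$ ovals contributes $2$ to the number of real preimages of a generic point of $\mathbb{RP}^1$, and (iii) show the unbounded component contributes the remaining amount; step (iii) is exactly where the \emph{cyclic} order of the intersections with the axes $l_1,\dots,l_n$ is indispensable, since for a non-cyclic M-curve such as $1+x+y+qxy=0$ (Figure \ref{f11}) the winding along the unbounded component partially cancels and the count, hence the conclusion, fails. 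Until (i)--(iii) are established, $F\subseteq\mathbb{R}A$ does not follow; since these are precisely the lemmas proved in \cite{mikhalkin2000real}, your proposal should be read as a correct roadmap to the cited proof rather than a self-contained argument.
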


\begin{proposition*}[Proposition \ref{prop: boundary embedding}]
Let $p$ be a Laurent polynomial with real coefficients and $A=\{(x,y)\in(\mathbb{C^*})^2|p(x,y)=0\}$. If $\mathbb{R}A$ is a cyclic-M curve, then $\mu(\mathbb{R}A)=\partial\mu(A)$. Besides, the amoeba map $\mu$ is an embedding on $\mathbb{R}A$.
\end{proposition*}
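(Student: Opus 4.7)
The plan is to deduce both claims from the previous proposition $F=\mathbb{R}A$, combined with a topological component count that exploits the cyclic-M hypothesis. First I would establish the easy inclusion $\partial\mu(A)\subseteq\mu(\mathbb{R}A)$. The amoeba of an algebraic subvariety of $(\mathbb{C}^*)^2$ is closed in $\mathbb{R}^2$, so every $p\in\partial\mu(A)$ lifts to some $q\in A$. If $q$ were a regular point of $\mu|_A$, then since the source $A$ and the target $\mathbb{R}^2$ have equal real dimension two, the implicit function theorem would make $\mu$ a local diffeomorphism near $q$, forcing $\mu(A)$ to contain an open neighborhood of $p$ and contradicting $p\in\partial\mu(A)$. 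Hence $q$ is a critical point of $\mu|_A$, and by the previous proposition $q\in\mathbb{R}A$.

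For the reverse inclusion $\mu(\mathbb{R}A)\subseteq\partial\mu(A)$ I would count connected components. By Harnack's bound together with the cyclic-M hypothesis, $\mathbb{R}A$ is a disjoint union of exactly $g+1$ smooth ovals, where $g$ is the number of interior lattice points of the Newton polygon of $p$. On the amoeba side, the Forsberg--Passare--Tsikh theorem says that $\mathbb{R}^2\setminus\mu(A)$ has at most $g+1$ components (one unbounded and at most one per interior lattice point), so $\partial\mu(A)$ consists of at most $g+1$ topological circles. Since $\partial\mu(A)\subseteq\mu(\mathbb{R}A)$ by the first paragraph and $\mathbb{R}A$ already has exactly $g+1$ circle components, the prescribed cyclic order in which the ovals of $\mathbb{R}A$ meet the toric axes pairs them one-to-one with the boundary circles, so no oval can have its image trapped in the interior of $\mu(A)$.

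The embedding claim then splits into injectivity and immersion. For \emph{injectivity}, the component-matching above promotes to a homeomorphism on each oval by a degree argument: an oval is a circle mapping to a circle, and the cyclic order of axis intersections forces the degree to be $\pm 1$, excluding coincidences $\mu(r_1)=\mu(r_2)$ for distinct $r_1,r_2$ on the same oval. Two real points on different ovals having the same amoeba image would require two ovals (necessarily in different sign quadrants of $(\mathbb{R}^*)^2$) to share a boundary circle, again ruled out by the cyclic ordering. For the \emph{immersion} statement, if $d\mu$ vanished on $T_r\mathbb{R}A$ at some $r$, then the curve $\mu(\mathbb{R}A)$ would have a cusp at $\mu(r)$, which is incompatible with the smooth boundary arc of $\mu(A)$ that it parametrizes.

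The principal obstacle lies in the second paragraph: making the cyclic matching precise, i.e.\ explaining why the prescribed cyclic order of axis intersections forces the $g+1$ ovals to map \textbf{bijectively} onto the $g+1$ boundary circles rather than, say, two ovals wrapping one circle while a third oval collapses into the interior. I expect this to go either through an analysis of the ends of $\mathbb{R}A$ in the toric compactification (each unbounded real branch exits along a specific axis, pinning down which tentacle of the amoeba its oval follows) or via a continuity/deformation argument starting from a tropical limit, where the combinatorial matching is visible from the Newton subdivision discussed earlier in the paper.
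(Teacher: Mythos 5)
First, a point of order: the paper does not prove this proposition at all. It is one of three results quoted verbatim from Mikhalkin, and the text says explicitly ``For short, we only list the statements here.'' So there is no in-paper argument to compare yours against; your proposal has to stand on its own, and as written it does not close.

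Your first step is fine: $\mu(A)$ is closed, a boundary point cannot be a regular value of $\mu|_A$ restricted to a neighborhood of any preimage, so $\partial\mu(A)\subseteq\mu(F)=\mu(\mathbb{R}A)$ by Proposition \ref{prop:F=RA}. The genuine gap is exactly where you flag it, and it is worse than a missing detail. The component count in your second paragraph does not parse: Forsberg--Passare--Tsikh gives an injection from the components of $\mathbb{R}^2\setminus\mu(A)$ into \emph{all} of $\Delta_p\cap\mathbb{Z}^2$, not into a set of size $g+1$; there is one unbounded complement component for each vertex (indeed each boundary lattice point can contribute one), and the boundaries of the unbounded components are non-compact arcs, not circles. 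So ``$\partial\mu(A)$ consists of at most $g+1$ topological circles'' is false, and the clean ``$g+1$ ovals versus $g+1$ circles'' bijection you want to exploit does not exist; in particular the single unbounded component $W$ of $\mathbb{R}\bar{A}$ must account for the boundaries of \emph{all} the unbounded complement components simultaneously, via its tentacles. The injectivity and immersion arguments then inherit this gap: the ``degree $\pm1$'' statement is really a property of the logarithmic Gauss map $\gamma$ (the degree count behind Proposition \ref{prop:F=RA} shows $\gamma|_{\mathbb{R}A}$ already exhausts $\deg\gamma$, hence is a local homeomorphism onto $\mathbb{RP}^1$ on each oval, which is what rules out inflection points and gives local convexity of the image), and it does not follow from the cyclic order of axis intersections alone; and your immersion argument assumes the boundary arc of $\mu(A)$ is smooth, which is part of what is being proved. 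The correct route --- Mikhalkin's --- is to work with $\gamma$ throughout: local convexity of $\mu$ on each oval plus the fact that $\mu$ is an open map off $\mathbb{R}A$ (where $A\setminus\mathbb{R}A$ has exactly two conjugate halves, by the M-condition) is what forces each oval to bound a complement component and to be embedded. If you want to repair the proposal, replace the component count by this Gauss-map convexity argument; otherwise the second and third paragraphs are a plausibility sketch, not a proof.
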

\begin{theorem*}[Theorem \ref{tp type}]
Let $p$ be a Laurent polynomial with real coefficients and $A=\{(x,y)\in(\mathbb{C^*})^2|p(x,y)=0\}$. If $\mathbb{R}A=A\cap(\mathbb{R^*})^2$ is a cyclic-M curve, the topological type of $(\mathbb{R}X_{\Delta_p};\mathbb{R}A, l_1\cup...\cup l_n)$ is uniquely determined by $\Delta_p$.
\end{theorem*}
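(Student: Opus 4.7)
The approach is to use Propositions \ref{prop:F=RA} and \ref{prop: boundary embedding} to reduce the classification of the topological triple to a combinatorial statement about $\Delta_p$. The key picture is that $\mathbb{R}X_{\Delta_p}$ is assembled from four copies of the closed amoeba $\overline{\mu(A)}$ glued along their boundaries, one for each connected component of $(\mathbb{R}^*)^2$, with the toric divisors $l_1\cup\cdots\cup l_n$ attached at infinity according to the edges of $\Delta_p$. Proposition \ref{prop: boundary embedding} identifies $\mathbb{R}A$ with the boundary $\partial\mu(A)$ of the amoeba, so once the homeomorphism type of the pair $(\overline{\mu(A)},\partial\mu(A))$ is pinned down by $\Delta_p$, so is the topological type of the entire triple.

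The first step is to show that $(\overline{\mu(A)},\partial\mu(A))$ depends only on $\Delta_p$. By Proposition \ref{prop:F=RA} the amoeba has no interior critical locus of $\mu|_A$ beyond its real part, so it deformation retracts onto its tropical spine, which by the subdivision theorem of the previous section is combinatorially dual to a coherent triangulation of $\Delta_p$. By Harnack's equality the number of bounded complementary regions equals the number of interior lattice points of $\Delta_p$, while the number of unbounded tentacles is determined by the primitive edge vectors of $\partial\Delta_p$. Hence $\overline{\mu(A)}$ is homeomorphic to a regular neighborhood of this planar graph and $\partial\mu(A)$ consists of the prescribed number of circles, all data extracted from $\Delta_p$.

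The second step is to perform the gluing. The sign map exhibits $\mathrm{Log}|_{(\mathbb{R}^*)^2}$ as a trivial $(\mathbb{Z}/2)^2$-cover of $\mathbb{R}^2$, and by Proposition \ref{prop: boundary embedding} the branching of this cover on $A\cap(\mathbb{R}^*)^2$ occurs along the entire $\partial\mu(A)$ --- this is the content of the cyclic-M condition. Consequently, the four sign-quadrant copies of $\overline{\mu(A)}$ are glued along $\partial\mu(A)\cong\mathbb{R}A$ via the homeomorphism provided by Proposition \ref{prop: boundary embedding}, and the resulting topological surface is unique up to homeomorphism. Compactification to $\mathbb{R}X_{\Delta_p}$ is then achieved by attaching the toric divisors $l_i$ along the unbounded tentacles of the amoeba in the normal direction of the corresponding edge $e_i\subset\partial\Delta_p$.

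The main obstacle is the last part: verifying that the attaching data along the toric boundary is determined by $\Delta_p$ alone. Concretely, one must show that the closure of $\mathbb{R}A$ inside $\mathbb{R}X_{\Delta_p}$ meets each divisor $l_i$ transversally in exactly $|e_i\cap\mathbb{Z}^2|-1$ points, arranged in a fixed cyclic order on $l_i$, with the same order agreeing across all four sign quadrants. This rigidity is precisely what makes the gluing pattern between the four quadrant copies and the $l_i$'s intrinsic to $\Delta_p$. Once established, uniqueness of the topological type of the triple follows from the standard classification of surfaces with boundary together with the recorded cyclic intersection data.
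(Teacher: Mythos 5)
First, a point of comparison: the paper does not prove Theorem \ref{tp type} at all --- it is quoted from Mikhalkin \cite{mikhalkin2000real}, and the text explicitly says only the statements are listed. So your proposal has to stand on its own, and as written it has a structural error. You assert that $\mathbb{R}X_{\Delta_p}$ is ``assembled from four copies of the closed amoeba $\overline{\mu(A)}$ glued along their boundaries.'' It is not: each of the four quadrants of $(\mathbb{R}^*)^2$ maps under $\mu$ onto all of $\mathbb{R}^2$, so $\mathbb{R}X_{\Delta_p}$ is four copies of (a compactification of) the plane, of which the amoeba is a proper subset. The correct picture, which follows from Propositions \ref{prop:F=RA} and \ref{prop: boundary embedding}, is that the \emph{complex} curve $A$ is two copies of $\mu(A)$ glued along $\partial\mu(A)$ (since $\mu|_A$ is $2$-to-$1$ off the critical locus $F=\mathbb{R}A$ and injective on it), while $\mathbb{R}A$ is carried injectively onto $\partial\mu(A)$ with different boundary circles of the amoeba lifting into different quadrants. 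Conflating these pictures makes your gluing step meaningless as stated; there is no cover ``branched along $\partial\mu(A)$'' here --- the sign cover of $\mathbb{R}^2$ by $(\mathbb{R}^*)^2$ is an honest trivial $4$-fold cover.

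The more serious issue is that the actual content of the theorem is deferred to your ``main obstacle'' paragraph and never supplied. The intersection pattern with the axes --- $d_i$ points on $l_i$, in cyclic order, all on one component --- is already part of Definition \ref{def:cyclic-M curve}, so verifying it is not the obstacle. What genuinely needs proof is that the distribution of the $g+1$ components of $\mathbb{R}\bar{A}$ among the four sign quadrants, and the order in which the arcs of the unbounded component pass between quadrants and axes, are forced by $\Delta_p$ alone. This is a sign/parity analysis on the lattice points of $\Delta_p$ (exactly the kind of argument the paper carries out for the mirror curve in Lemma \ref{l5.2.2}), and it is the heart of Mikhalkin's proof; without it you have only shown that the pair $(\overline{\mu(A)},\partial\mu(A))$ is determined, not the triple. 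Finally, your first step overreaches: the tropical spine and its dual subdivision depend on the coefficients of $p$, not only on $\Delta_p$; what is an invariant of $\Delta_p$ under the cyclic-M hypothesis is only the \emph{number} of complement components (maximal, so that the order map is a bijection onto $\Delta_p\cap\mathbb{Z}^2$) and the tentacle structure at infinity, and that weaker statement is what your argument should invoke.
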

The mirror curve is defined by a Laurent polynomial written as $$H(x,y,q)=1+x+y+\sum\limits_{i=4}^{p+3}a_i(q)x^{m_i}y^{n_i}.$$The second result about a mirror curve of a smooth toric Calabi-Yau 3-fold is that under special choices of real parameters, the mirror curve is a cyclic-M curve.
\begin{corollary*}[Corollary \ref{cr5.4}]
Near the large radius limit, with $a_i(q)<0$ when $m_i,n_i$ are both odd and $a_i(q)>0$ when $m_i$ or $n_i$ is even, the mirror curve is a cyclic-M curve.
\end{corollary*}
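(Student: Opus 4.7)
The plan is to combine Viro's patchworking theorem with Theorem \ref{thm:subdivision-triangulation}. By the latter, near the large radius limit the tropical spine of the mirror curve realizes a subdivision of $\mathbb{R}^2$ dual to the triangulation $T_\Sigma$ of the Newton polytope $\Delta$; since $X_\Sigma$ is smooth and Calabi-Yau, every triangle of $T_\Sigma$ has normalized area one, i.e.\ $T_\Sigma$ is a primitive triangulation. This is exactly the combinatorial datum to which Viro's construction applies.

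First I would re-interpret the sign hypothesis on the $a_i(q)$. The rule $\mathrm{sign}(a_i)=-1$ when both $m_i, n_i$ are odd and $\mathrm{sign}(a_i)=+1$ otherwise makes the map $(m,n)\mapsto \mathrm{sign}(a_{(m,n)})$ factor through $\mathbb{Z}^2/2\mathbb{Z}^2$; this is precisely the Harnack sign pattern, which is characterized by the property that the four quadrant reflections of the polynomial in $(\mathbb{R^*})^2$ glue compatibly along the axes. Classical Harnack--Itenberg--Viro theory identifies this pattern as the one which, under patchworking with a primitive triangulation, produces a maximal (M-)curve with the cyclic arrangement.

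Second I would invoke Viro's patchworking. Writing $a_i(q)=\epsilon_i q^{\nu_i}(1+o(1))$ near the large radius limit, where the exponents $\nu_i$ are the heights inducing $T_\Sigma$ and $\epsilon_i$ are the signs prescribed in the hypothesis, Viro's theorem asserts that for $q$ sufficiently close to the limit the topological pair $(\mathbb{R}X_\Delta, \mathbb{R}A)$ is obtained by gluing the local charts of the individual triangles along shared edges. For a primitive triangle under the Harnack signs the chart contributes a single arc, so the assembled real curve has exactly $1+\#\{\text{interior lattice points of }\Delta\}$ components. Harnack's inequality then forces this to be an M-curve.

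Finally, to promote M to cyclic-M, I would check the order of intersections with the toric boundary divisors $l_j\subset \mathbb{R}X_\Delta$. Because $T_\Sigma$ refines $\partial\Delta$ into unit segments and the Harnack signs force the unique real arc across each boundary triangle to exit along a specific edge, a local analysis at each boundary primitive segment shows that the components thread the axes in the prescribed cyclic order; combining this with the previous step yields that $\mathbb{R}A$ is cyclic-M. The main obstacle will be this last step: one must carefully track the gluing across every interior edge of $T_\Sigma$, distinguishing cases by the parities of the endpoints of the edge, so as to rule out non-cyclic M-configurations compatible with the same count of components. That parity case analysis is exactly where the hypothesis on $\mathrm{sign}(a_i(q))$ does the essential work.
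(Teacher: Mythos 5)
Your strategy is genuinely different from the paper's. The paper never invokes patchworking: it proves the M-property by hand (Theorem \ref{t5.2.1}), exhibiting for each interior lattice point $b_i$ a compact region $C_{b_i}$ on whose boundary $H<0$ while $H>0$ at an interior point, so the intermediate value theorem yields one oval per interior lattice point, and Lemma \ref{l5.2.2} together with a domination estimate shows these ovals are pairwise distinct; it then proves the cyclic property by explicitly locating the $d_i$ roots of $H$ restricted to each axis in disjoint intervals (Theorem \ref{t5.3.1}) and constructing, again by the intermediate value theorem, arcs joining consecutive intersection points on one axis and joining adjacent axes (Theorems \ref{t5.3.2} and \ref{t5.3.3}). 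Your route --- recognize the sign rule $(-1)^{m_in_i}$ as the Harnack distribution, feed it together with the primitive convex triangulation $T_{\Sigma}$ (whose duality to the tropical spine is Theorem \ref{thm:subdivision-triangulation}) into Viro's patchworking, and quote the Itenberg--Viro--Mikhalkin theory that the resulting patchworked curve is a simple Harnack, hence cyclic-M, curve --- is a legitimate alternative and conceptually cleaner, at the price of importing machinery the paper deliberately avoids in favor of elementary estimates.

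That said, two genuine gaps remain. First, Viro's theorem governs a one-parameter degeneration $a_i=\epsilon_i t^{\nu_i}$ as $t\to 0$, whereas ``near the large radius limit'' here means all real $q$ in a full-dimensional region $0<|q_j|<\epsilon$ with the prescribed signs; your normalization $a_i(q)=\epsilon_i q^{\nu_i}(1+o(1))$ matches the patchworking format only along a ray $q_j=t^{c_j}$. To cover the whole region you must either invoke a multiparameter patchworking statement or argue that the admissible real parameter set is connected and the curve stays nonsingular throughout it, so that the topological type is locally constant; neither is addressed. Second, the cyclic step is asserted rather than proved: you say a ``local analysis at each boundary primitive segment shows'' the correct cyclic order and you yourself flag the interior-edge gluing as the main obstacle without resolving it. This is precisely the part that cannot be read off from the component count, since M-curves that violate the cyclic condition exist --- the paper's own example $1+z_1+z_2+qz_1z_2=0$ in Figure \ref{f11} is an M-curve that is not cyclic-M. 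You should either cite the precise statement that combinatorial patchworking of a primitive convex triangulation with Harnack signs produces a curve in maximal position in the sense of \cite{mikhalkin2000real}, or actually carry out the edge-by-edge parity analysis you sketch; as written, the proof of the cyclic property is incomplete.
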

Finally, we apply Mikhalkin's result to show the mirror curve of smooth toric Calabi-Yau 3-fold is glued from tubes and pairs of pants, which any vertex in the dual graph of $T_{\Sigma}$ corresponds to a pair of pants and any edge corresponds to a tube. This gluing determines the topology of the mirror curve.
\subsection{Outline of the proof}
We prove the results in several steps. Beginning in Section 2, we recall tropical geometry and give a definition of the mirror curve. For the first result, we mainly finish the proof by applying the method of tropical geometry. Specifically, in Section \ref{2.6}, we will see that generally for a Laurent polynomial of $F(z_1,...,z_n)$, we could construct an injective map $\psi_F$ from the connected components of $\mathbb{R}^n-A_F$ to $\Delta_F\cap \mathbb{Z}^n$. Then in Section \ref{3}, we prove that near the large radius limit, for the mirror curve $H(x,y,q)=0$ of a smooth toric Calabi-Yau 3-fold, the previous injective map $\psi_H$ is also surjective by constructing the preimage of any lattice point in $\Delta_H$. Also in Section \ref{3}, by introducing the notation of Ronkin function, we show the amoeba of a Laurent polynomial has a canonical deformation retract named tropical spine. Then near the large radius limit, we could calculate the tropical spine of a mirror curve concretely. Further, such a tropical spine gives a dual subdivision of $T_{\Sigma}$. These steps lead to our first Theorem \ref{thm:subdivision-triangulation}. 

For the second result, we mainly finish the proof with some symmetry propositions of the distribution rules of coefficients for the mirror curve as given in Section \ref{2.5}. In Section \ref{4}, we recall Mikhalkin's result about cyclic-M curves that if $\mathbb{R}A$ is a cyclic-M curve and $A$ is the zeroes of the Laurent polynomial $p$ in $(\mathbb{C^*})^2$, then the amoeba map $\mu_A$ is an embedding on the boundary of amoeba and 2-1 inside the amoeba. This fact explains why we hope the mirror curve is a cyclic-M curve. Because we have already known the deformation retract of a mirror curve's amoeba, if the mirror curve is a cyclic-M curve, we could locally figure out the topology of the mirror curve. However, generally for the real parameters near the large radius limit, the mirror curve is not a cyclic-M curve. In Section \ref{5}, we first give the choice of signs for the parameters in Section \ref{5.1} and explain the motivation for such a choice. Then in Section \ref{5.2}, we prove under such a choice of coefficients, any interior lattice point in Newton polytope $\Delta_H$ corresponds to a unique bounded connected component of mirror curve in $(\mathbb{R}^*)^2$. Thus the mirror curve is an M-curve. Finally, in Section \ref{5.3}, we construct arcs connecting any two adjacent points of intersection of the unbounded component and axes of toric surface $\mathbb{R}X_{\Delta_p}$ by the intermediate value theorem. Then we get Corollary \ref{cr5.4}.
\subsection{Acknowledgement}
I would sincerely appreciate my advisor for the undergraduate research Prof. Bohan Fang for his patient guidance and useful suggestions both on the project itself and further math research. I would also thank Prof. Shuai Guo for his frequent discussions with me about the project, his helpful advice, and for inviting me to give the report on such a research project. Besides, I would wish to thank Ce Ji, Kaitai He, Zhiyuan Zhang, Jialiang Lin, and Zhengnan Chen for their valuable discussions about the project. This project is partially supported by an undergraduate research grant at School of Mathematical Sciences, Peking University.
\begin{center}
\section{Preliminaries on toric varieties, mirror curves, and tropical geometry}
\end{center}
\label{2}
In this section, we recall some results in toric variety and tropical geometry, then give the definition of mirror curve. We mainly follow the notation in \cite{fang2020remodeling}, and we refer to \cite{cox2011toric}\cite{fulton1993introduction} for general toric language.
\subsection{Subdivision, dual subdivision} In this chapter, we mainly introduce some combinatorial languages which are used for the description of the mirror curve and its tropicalization. We follow the symbol of \cite{passare2000amoebas}.

\begin{definition}
For a convex set $K$ in $\mathbb{R}^n$. A collection $T$ of nonempty closed convex subsets of $K$
is called a convex subdivision if it satisfies the following conditions:
\begin{enumerate}
    \item The union of all sets in $T$ is equal to $K$.
    \item If $\sigma$, $\tau\in T$ and $\sigma\cap\tau$ is nonempty, then $\sigma\cap\tau\in T$
    \item If $\sigma\in T$ and $\tau$ is any subset of $\sigma$, then $\tau\in T$ if and only if $\tau$ is a face of $\sigma$.
\end{enumerate} Here a face of a convex set $\sigma$ means a set of the form $\{x\in\sigma|\langle\xi,x\rangle=sup_{y\in\sigma}\langle\xi,y\rangle\}$ for some $\xi\in\mathbb{R}^n$. Since then, we write a subdivision short for a convex subdivision.
\end{definition}

In a subdivision $T$ of $K$, we call $k$-dimensional subsets $k$-cells. If $K$ is a convex polytope, and all cells of $T$ are convex polytopes, we call $T$ a polytopal subdivision. Besides, if $T$ is a subdivision of a 2-dimensional polytope $\Delta$, and all 2-dimensional cells of $T$ are triangles, we call $T$ a triangulation.

If $\tau\subset\sigma$, and $\tau,\sigma$ are both convex sets, we define the cone generated from $\tau$ and $\sigma$ by
$$cone(\tau,\sigma)=\{t(x-y)|x\in\sigma,y\in\tau,t\ge0\}.$$
Clearly, this set is a convex cone. If C is a convex cone, its dual is defined to be the
cone $C^{v}=\{\xi\in\mathbb{R}^n|\langle\xi,x\rangle\le 0,\forall x\in C\}$. Then we will define dual subdivision.

\begin{definition} Let $K$, $K_0$ be convex sets in $\mathbb{R}^n$, and let $T$, $T_0$ be convex subdivisions of $K$, $K_0$. We say that $T$ and $T_0$ are dual (to each other) if there exists a bijective map $T\to T_0$, denoted $\sigma\to\sigma^*$, satisfying the following conditions:
\begin{enumerate}
\item For $\sigma,\tau\in T$, $\tau\subset\sigma$ if and only if $\sigma^*\subset\tau^*$
\item If $\tau\subset\sigma$, then $cone(\tau,\sigma)$ is dual to $cone(\sigma^*,\tau^*)$
\end{enumerate}
\end{definition}
Because $cone(\sigma,\sigma)$ is the affine space spanned by $\sigma$, we get $\sigma$ and $\sigma^*$ are orthogonal. Moreover, dim $\sigma$+dim $\sigma^*=n$. Figure \ref{f3} and Figure \ref{f4} show a subdivision of $\mathbb{R}^2$ and its dual subdivision. The dual subdivision is an example of triangulation. 
\begin{figure}[htbp]
\centering
\begin{minipage}[t]{0.48\textwidth}
\centering
\includegraphics[width=6cm]{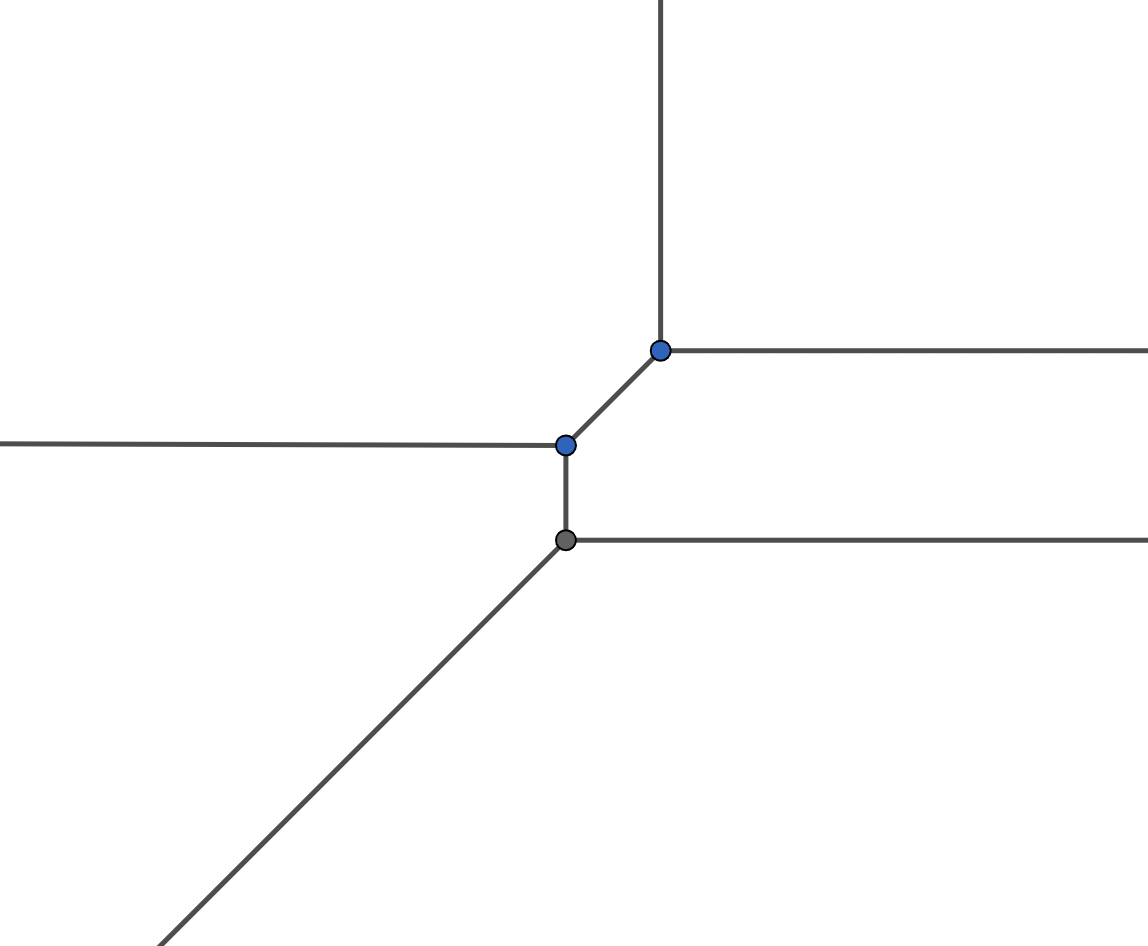}
\caption{Division of $\mathbb{R}^2$}
\label{f3}
\end{minipage}
\begin{minipage}[t]{0.48\textwidth}
\centering
\includegraphics[width=6cm]{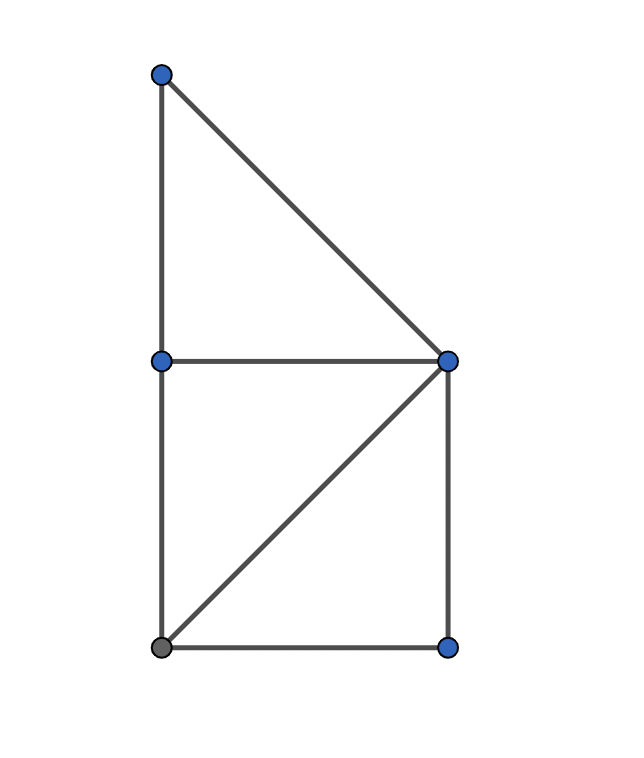}
\caption{Dual Triangulation}
\label{f4}
\end{minipage}
\end{figure}

\subsection{Simplicial toric Calabi-Yau 3-fold and its fan}
In this paper, we only consider the simplicial toric Calabi-Yau 3-fold $X_{\Sigma}$ given by a simplicial fan $\Sigma$ in a lattice $N\cong \mathbb{Z}^3$. For the convention, we denote $Hom(N,\mathbb{Z})$ by $M$, then $M\cong \mathbb{Z}^3$ is also a lattice. We call $M$ the dual lattice of $N$. Besides, by the definition of the toric Calabi-Yau 3-fold, the canonical divisor of $X_{\Sigma}$ is trivial.

It is known that there are two ways to define a general 3-dim toric variety. The first is to use homogeneous coordinates\cite{cox2011toric}. For any toric 3-fold $X$, we could write $$X=\frac{\mathbb{C}^{k+3}-S}{(\mathbb{C^*})^k}.$$ Here $S$ is a fixed subset under the action of open dense complex torus $(\mathbb{C}^*)^3\subset X$, and k $\mathbb{C^*}$-actions are given by $$\mathbb{C^*}_i: \lambda* (z_1,...,z_{k+3})=(\lambda^{T_{i,1}}z_1,...,\lambda^{T_{i,k+3}}z_{k+3}).$$ The coefficient matrix $$T=\left(
\begin{array}{ccc}
T_{1,1} & ... & T_{1,k+3}\\
... & ...& ...\\
T_{k,1}& ...& T_{k,k+3}\\
\end{array}
\right)$$ is called the toric charge for $X$. Under such definition, the equivalent description of Calabi-Yau condition is that the sum of elements in any fixed line equals zero, i.e. $$X\quad \text{Calabi-Yau}\Longleftrightarrow \sum\limits_{a=1}^{k+3}T_{i,a}=0 \quad \forall i=1,2,...,k.$$

The second way to define a toric 3-fold is just the original definition given by a 3-dim simplicial fan $\Sigma\subset N\cong \mathbb{Z}^3$ \cite{fulton1993introduction}. For a toric 3-fold $X_{\Sigma}$ which corresponds to the fan $\Sigma$, we define the d-dimensional cones in $\Sigma$ by $\Sigma(d)$. For the convention, we always write $\Sigma(1)=\{\rho_1,...,\rho_{k+3}\}$, where $k+3$ is the number of 1-dimensional cones in $\Sigma$. Besides, we denote the generator of $\rho_i$ by $b_i$.  

By definition, $X_{\Sigma}$ contains $\mathbb{T}=N\bigotimes\mathbb{C^*}\cong (\mathbb{C^*})^3$ as an open dense subset. The natural action of $\mathbb{T}$ on itself could be extended to $X_{\Sigma}$. The lattice $N$ could also be canonically identified with $Hom(\mathbb{C}^*,\mathbb{T})$. $N$ is so called cocharacter lattice of $\mathbb{T}$. $M=Hom(\mathbb{T},\mathbb{C}^*)$ is called the character lattice. 

If we define $\Tilde{N}=\bigoplus\limits_{i=1}^{k+3}\mathbb{Z}\Tilde{b_i}$, then we have a natural group homomorphism $\psi: \Tilde{N}\to N$, which sends $\Tilde{b_i}$ to $b_i$. We denote the kernel of $\psi$ by $L$, then we get the exact sequence $$0\to L\stackrel{\psi}\to \Tilde{N}\stackrel{\phi}\to N\to 0.$$ Tensoring $\mathbb{C}^*$ in the exact sequence, we have the exact sequence $$1\to G_{\Sigma}\to \Tilde{\mathbb{T}}\to \mathbb{T}\to 1,$$ where $\Tilde{\mathbb{T}}\cong (\mathbb{C}^*)^{k+3}$ has an action on itself. We extend the action to $\mathbb{C}^{k+3}=$Spec $\mathbb{C}[Z_1,$ $...,Z_{3+k}]$. For any $\sigma\in \Sigma$, we consider $Z_{\sigma}=\prod\limits_{\rho_i\notin\sigma}Z_i$, define $Z(\Sigma)$ to be the closed subvariety defined by the ideal generated by $\{Z_{\sigma}|\sigma\in\Sigma\}$. Then $X_{\Sigma}$ could be identified with the geometric quotient $\mathbb{C}^{3+k}-Z(\Sigma)/G_{\Sigma}$. This is the relation between the two definitions.

Calabi-Yau condition also has a great description under the second definition. The description is that $X$ Calabi-Yau iff all $b_i$ lie in a hyperplane of $N$, or equivalently, there exists a vector $e_3^*\in M$ which makes $\langle e_3^*,b_i\rangle=1$ for any $i=1,...,k+3$. Now we choose $e_1^*,e_2^*$ which make $\{e_1^*,e_2^*,e_3^*\}$ a basis of $M$. Then under the dual basis $\{e_1,e_2,e_3\}$, $b_i$ has the coordinate $(m_i,n_i,1)$ in the lattice $N$. Since then, we always identify the generator of any 1-dimensional cone $b_i$ with a lattice point $(m_i,n_i)$ in $N'$.

For the convention, we define the Calabi-Yau subtorus $\mathbb{T'}$ of $\mathbb{T}$ that $$\mathbb{T'}=ker(e_3^*:\mathbb{T}\to \mathbb{C})\cong (\mathbb{C^*})^2.$$ Then $N'=ker(e_3^*:N\to Z)$ could be identified with $Hom(\mathbb{C^*},\mathbb{T'})$. We define $P_{\Sigma}\subset N'$ the convex hull of $(m_i,n_i)$. Then $P_{\Sigma}$ is a 2-dim convex polytope. The simplicial fan $\Sigma$ also gives a triangulation of $P_{\Sigma}$. For any $\sigma\in \Sigma(3)$, $\sigma\cap N'$ is a triangle. Then a subdivision $T_{\Sigma}$ is given by all these triangles and their faces. Since then, we always use a triangulation $T_{\Sigma}$ to represent a simplicial toric Calabi-Yau 3-fold.      

For a general triangulation $T_{\Sigma}$, $X_{\Sigma}$ is a toric orbifold. In \cite{jiang2008orbifold}, the author introduced the extended stacky fan to deal with the such orbifold case. Mirror curves are also generally defined for simplicial toric Calabi-Yau 3-folds. However, in this paper, we only consider the smooth toric manifold case. Because $X_{\Sigma}$ is smooth iff any cone of $\Sigma$ forms a $\mathbb{Z}$-basis of $N$, since then we assume any triangle in $T_{\Sigma}$ has area $\frac{1}{2}$. In other words, triangulation $T_{\Sigma}$ is the finest. Figure \ref{f5} shows a finest triangulation. 
\begin{figure}
    \centering
    \includegraphics[scale=0.3]{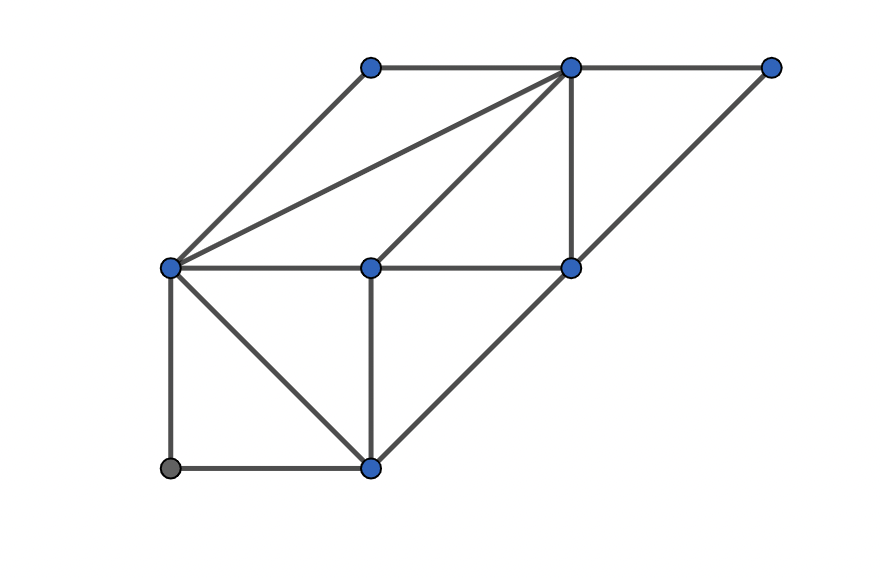}
    \caption{Finest Division}
    \label{f5}
\end{figure}

\subsection{Toric surface associated to a 2-dimension convex polytope}
The mirror curve which we will give a detailed definition later is a complex curve in $(\mathbb{C^*})^2$ as well as a punctured Riemann surface, which is given by a complex Laurent polynomial in two variables with the parameter $q$. Sometimes, we need to compactify it on the toric surface defined by Newton polytope of the polynomial to get a smooth Riemann surface and consider genus or real connected components on the toric surface instead of $(\mathbb{C^*})^2$ or $(\mathbb{R^*})^2$. So in this chapter, we review the definition and some basic facts about the toric surface.

Firstly, we follow the definition in Fulton's book\cite{fulton1993introduction}. Let $\Delta$ be a convex polytope in $M_{\mathbb{R}}$ with integer vertices $A_1,A_2,...,A_k$ in counterclockwise order and $\{\Delta_i=A_iA_{i+1},i=1,2,...,k\}$ be k sides of $\Delta$, where $A_1=A_{k+1}$. For any face of $\Delta$, specifically, all segments, vertices, and the polytope itself (the first two are called proper faces), we consider their dual cones. If $\Delta_P$ is a face of $\Delta$, the dual cone $\sigma_P=\{v\in N_\mathbb{R}|\langle v,u_1\rangle\ge\langle v,u_2\rangle$, for any $u_1\in \Delta_P,u_2\in \Delta\}$. It is easy to see that all dual cones form a fan in $N_\mathbb{R}$. Then the fan defines a $2$-dimensional toric variety $X_{\Delta}$, we called this variety the toric surface associated with convex polytope $\Delta$. $X_{\Delta}$ contains $(\mathbb{C^*})^2$ as an open dense subset.    

Next we move on to the real toric surface $\mathbb{R}{X_{\Delta}}$, which is the closure of $(\mathbb{R^*})^2$ in $X_{\Delta}$. For a non-singular algebraic curve on a toric surface $X_{\Delta}$ whose Newton polytope coincides with the polytope $\Delta$, we could show the genus g of such a curve equals the number of interior lattice points of the Newton polytope by calculating the dimension of global differential forms on the curve\cite{fang2020remodeling}. By Harnack's inequality \cite{harnack1876ueber}, there are at most g+1 connected components of the curve on the real toric surface.

A more concrete and useful description of the toric surface is given in \cite{fang2020remodeling}. Let $\Delta_1$, ..., $\Delta_n$ be the sides of the polytope $\Delta$. We take the dual 1-dimensional cone $\rho_i$ of each $\Delta_i$. For $\rho_i=\mathbb{Z}_{\ge0}b_i$, considering $\Tilde{N}=\bigoplus\limits_{i=1}^n \mathbb{Z}\Tilde{b_i}$ and the natural map $\psi:\Tilde{N}\to N$, which sends $\Tilde{b_i}$ to $b_i$, we have the exact sequence:
\begin{align*}
    0\to L\to \Tilde{N}\stackrel{\psi}\to N\to 0.
\end{align*}

Tensoring $\mathbb{C}^*$ on the exact sequence, we have
\begin{align*}
    1\to G_{\Delta}\to (\mathbb{C}^*)^n\to (\mathbb{C}^*)^2\to 1.
\end{align*}

$G_{\Delta}$ has a natural action on $(\mathbb{C}^*)^n$. We could extend the action to $\mathbb{C}^n$. Then we define $X_{\Delta}=\mathbb{C}^n/G_{\Delta}$ by the toric surface,  with the open dense subset $(\mathbb{C}^*)^n/G_{\Delta}\cong (\mathbb{C}^*)^2$. Moreover, we define $l_i=\{(z_1,...,z_n)\in \mathbb{R}X_{\Delta}|z_i=0\}$ under the homogeneous coordinate $(z_1,...,z_n)$ to be the axis of the real toric surface $\mathbb{R}X_{\Delta}$ related to $\Delta_i$. For any $i=1,...,n$, the axis $l_i$ is a boundary divisor of $\mathbb{R}X_{\Delta}$.
\subsection{Nef cone}
To give the exact definition of the mirror curve, we must firstly introduce the Nef cone with respect to a simplicial fan. One could find the general definition of Nef cone in \cite{cox2011toric}. But in this paper, still for the convention, we only consider the case of smooth toric Calabi-Yau 3-fold. Recall the exact sequence $$0\to L\stackrel{\psi}\to \Tilde{N}\stackrel{\phi}\to N\to 0,$$ and take $Hom(-,Z)$ on the exact sequence of free $Z$-module. We get $$0\to M\stackrel{\phi^v}\to \Tilde{M}\stackrel{\psi^v}\to L^{v}\to 0.$$

We assume $\#\Sigma(1)=p+3$ and then consider the $\mathbb{T}$-equivalent Poincare dual $D_i^{\mathbb{T}}$ of the divisor $\{Z_i=0\}$ in $\mathbb{C}^{p+3}$. Here $D_i^{\mathbb{T}}$ is also the dual basis of $\Tilde{b_i}$ for $i=1,2,..,p+3$. Next we define $D_i=\psi^v(D_i^{\mathbb{T}})$ in $L^v$. For any $\sigma\in \Sigma(3)$, let $I_{\sigma}$ be $\{i|b_i\notin\sigma\}$ and $I_{\sigma}'$ be $\{i|b_i\in\sigma\}$. Then $\{D_i|i\in I_{\sigma}\}$ form a basis of $L^{v}$. 
\begin{definition}
For a $\sigma\in \Sigma(3)$, we define the Nef cone of $\sigma$ to be $$\Tilde{Nef}_{\sigma}=\{\mathbb{R}_{\ge 0}D_i|i\in I_{\sigma}\}.$$ Besides, we define the Nef cone of $\Sigma$ to be $$\Tilde{Nef}(\Sigma)=\bigcap\limits_{\sigma\in \Sigma(3)}\Tilde{Nef}_{\sigma}.$$
\end{definition} 
\subsection{Mirror curve}
\label{2.5}
In this chapter, we define the mirror curve of a smooth toric Calabi-Yau 3-manifold. For $\Sigma$, we give any lattice point of $P_{\Sigma}$ a monomial with parameter $q=(q_1,...,q_p)$. For the convention, we always assume $b_1=(1,0)$, $b_2=(0,1)$, $b_3=(0,0)$. We choose a basis $\{H_i|i=1,2,...,p\}$ of $L_{\mathbb{Q}}^v$ in the Nef cone of $\Sigma$. Then for any $\sigma\in \Sigma(3)$, because $\{D_i|i\in I_{\sigma}\}$ form a basis of $L_{\mathbb{Q}}^v$, we could write $H_i=\sum\limits_{j\in I_{\sigma}}s_{i,j}^{\sigma}D_j$ for $i=1,2,..,p$. Here $s_{i,j}^{\sigma}$ is a non-negative rational number for any $i,j,\sigma$. We now take $H_1,...,H_p$ which make all $s_{i,j}^{\sigma}$ non-negative integers and $S_{\sigma}=[s_{i,j}^{\sigma}]_{p*p}$ non-degenerate. Then for the parameter $q$, we define 
    
$$ a_i^{\sigma}(q)=\left\{
\begin{array}{lcl}
1       &      & {i\in I_{\sigma}'}\\
\prod\limits_{j=1}^{p}(q_j)^{s_{i,j}^{\sigma}}     &      & {i\in I_{\sigma}}
\end{array} \right. $$

A flag $(\tau,\sigma)$ of $\Sigma$ consists of two cones $\sigma\in\Sigma(3)$ and $\tau\in\Sigma(2)$ with $\tau\subset\sigma$. Then for any flag $(\tau,\sigma)$, there exist the unique $i_1,i_2,i_3$ which make $I_{\sigma}'=\{i_1,i_2,i_3\}$, $I_{\tau}'=\{i_2,i_3\}$, $I_{\sigma}=\{1,2,...,p+3\}-I_{\sigma}'$, and $I_{\tau}=\{1,2,...,p+3\}-I_{\tau}'$, where $b_{i_1},b_{i_2},b_{i_3}$ satisfy the counterclockwise order in $\sigma$. For the convention, we call $b_{i_3}$ the origin of flag $(\tau,\sigma)$.

Given a flag $(\tau,\sigma)$, if $\boldsymbol{b_{i_1}}-\boldsymbol{b_{i_3}}=(m_{i_1},n_{i_1})$, $\boldsymbol{b_{i_2}}-\boldsymbol{b_{i_3}}=(m_{i_2},n_{i_2})$, we define \begin{align*}
    e_{1,(\tau,\sigma)}&=m_{i_1}e_1+n_{i_1}e_2\\
    e_{2,(\tau,\sigma)}&=m_{i_2}e_1+n_{i_2}e_2.
\end{align*}

Then $m_{i_1}n_{i_2}-m_{i_2}n_{i_1}=1$. Any lattice point $b_{j}$ of $P_{\Sigma}$ could be written $m_j'e_{1,(\tau,\sigma)}+n_j'e_{2,(\tau,\sigma)}+b_{i_3}$. We call $(m_j',n_j')$ the coordinate of $b_j$ under the flag $(\tau,\sigma)$.  

\begin{definition}The mirror curve with respect to a given flag $(\tau,\sigma)$ is the zeroes in $(\mathbb{C^*})^2$ of following Laurent polynomial:  $$H_{(\tau,\sigma)}(X_{(\tau,\sigma)},Y_{(\tau,\sigma)},q)=\sum\limits_{i=1}^{p+3}a_i^{\sigma}(q)X_{(\tau,\sigma)}^{m_i'}Y_{(\tau,\sigma)}^{n_i'}.$$
\end{definition}
Specially, when $(\tau_1,\sigma_1)$ satisfies the condition $I_{\sigma_1}'=\{1,2,3\},I_{\tau_1}'=\{2,3\}$, for the convention, we write $H_{(\tau_1,\sigma_1)}(X_{(\tau_1,\sigma_1)},Y_{(\tau_1,\sigma_1)},q)$ as $H(X,Y,q)$. From now on, if we talk about a mirror curve without a given flag, then we mean the mirror curve of flag $(\tau_1,\sigma_1)$. 

There are many good propositions of the mirror curve. The most elementary and important proposition of the mirror curve is the affine equivalence. 

\begin{proposition}
\label{p2.5.2}
The mirror curves of two different flags $(\tau_1,\sigma_1),(\tau_2,\sigma_2)$ of $\Sigma$ are affine equivalent. Besides, if $I_{\sigma_1}'=\{1,2,3\}$, $I_{\tau_1}'=\{2,3\}$, $I_{\sigma_2}'=\{i_1,i_2,i_3\}$, and $I_{\tau_2}'=\{i_2,i_3\}$, we could write down the coordinate change concretely.
\begin{align*}
    X_{(\tau_2,\sigma_2)}&=X^aY^b\frac{a_{i_1}(q)}{a_{i_3}(q)}\\
    Y_{(\tau_2,\sigma_2)}&=X^cY^d\frac{a_{i_2}(q)}{a_{i_3}(q)}.
\end{align*} Under such coordinate transformation, we have the following equivalence $$H(X,Y,q)=a_{i_3}(q)X^{m_{i_3}}Y^{n_{i_3}}H_{(\tau_2,\sigma_2)}(X_{(\tau_2,\sigma_2)},Y_{(\tau_2,\sigma_2)},q).$$

Here $a,b,c,d$ is given by $$\boldsymbol{b_{i_1}}-\boldsymbol{b_{i_3}}=(a,b)$$ $$\boldsymbol{b_{i_2}}-\boldsymbol{b_{i_3}}=(c,d).$$
\end{proposition}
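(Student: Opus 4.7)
The plan is to verify the second, explicit assertion, since this implies affine equivalence immediately. I will expand the right-hand side of the claimed equation term by term and match each monomial against $H(X,Y,q) = \sum_{j} a_j^{\sigma_1}(q) X^{m_j} Y^{n_j}$. The verification splits cleanly into an exponent check (pure lattice arithmetic on $N$) and a coefficient check (a change-of-basis computation in $L^v$).

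For the exponents, by the very definition of the coordinate $(m_j', n_j')$ of $b_j$ under the flag $(\tau_2, \sigma_2)$, one has $b_j = b_{i_3} + m_j'(b_{i_1} - b_{i_3}) + n_j'(b_{i_2} - b_{i_3})$. Projecting to the first two components gives
$$(m_j, n_j) = (m_{i_3}, n_{i_3}) + m_j'(a,b) + n_j'(c,d).$$
Substituting the proposed coordinate change into $X^{m_{i_3}} Y^{n_{i_3}} X_{(\tau_2,\sigma_2)}^{m_j'} Y_{(\tau_2,\sigma_2)}^{n_j'}$ then yields $X^{m_j} Y^{n_j}$ times a pure monomial in the $a_{i_k}(q)$. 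Smoothness of $\sigma_2$ forces $ad - bc = 1$, so $(X,Y) \mapsto (X_{(\tau_2,\sigma_2)}, Y_{(\tau_2,\sigma_2)})$ is genuinely an automorphism of $(\mathbb{C}^*)^2$.

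The remaining coefficient identity is
$$a_j^{\sigma_1}(q) = a_j^{\sigma_2}(q)\, a_{i_1}^{\sigma_1}(q)^{m_j'}\, a_{i_2}^{\sigma_1}(q)^{n_j'}\, a_{i_3}^{\sigma_1}(q)^{1 - m_j' - n_j'},$$
and is the heart of the argument. The Calabi-Yau condition (all $b_k$ have last coordinate $1$) upgrades the affine decomposition of $b_j$ to the linear relation $b_j = m_j' b_{i_1} + n_j' b_{i_2} + (1 - m_j' - n_j') b_{i_3}$ in $N$, so
$$\ell_j := \tilde{b}_j - m_j' \tilde{b}_{i_1} - n_j' \tilde{b}_{i_2} - (1 - m_j' - n_j') \tilde{b}_{i_3}$$
lies in $L$. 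I would then compute the pairing $\langle H_l, \ell_j \rangle \in \mathbb{Z}$ in two different bases of $L^v$ and equate the results. Expanding $H_l$ in the basis $\{D_k : k \in I_{\sigma_2}\}$ and using $\langle D_k, \ell_j \rangle = [\tilde{b}_k]\,\ell_j$ yields just $s_{l,j}^{\sigma_2}$, since $i_1, i_2, i_3 \notin I_{\sigma_2}$. Expanding instead in $\{D_k : k \in I_{\sigma_1}\}$ yields $s_{l,j}^{\sigma_1} - m_j' s_{l,i_1}^{\sigma_1} - n_j' s_{l,i_2}^{\sigma_1} - (1 - m_j' - n_j') s_{l,i_3}^{\sigma_1}$, with the convention $s_{l,k}^{\sigma_1} := 0$ for $k \in I_{\sigma_1}' = \{1,2,3\}$ (consistent with $a_k^{\sigma_1}(q) = 1$ there). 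Raising $q_l$ to these equal exponents and taking $\prod_l$ produces exactly the desired multiplicative identity.

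The main obstacle is bookkeeping: I must verify that the pairing computation handles uniformly the cases where $j$ or some of the $i_k$ fall into $I_{\sigma}'$ rather than $I_{\sigma}$ (for either $\sigma$), and in particular that the zero-convention above is the right one. Once the two identities are in place, substituting the coordinate change into $a_{i_3}(q) X^{m_{i_3}} Y^{n_{i_3}} H_{(\tau_2, \sigma_2)}$ and collecting terms reproduces $H(X,Y,q)$ term by term, yielding both the explicit coordinate transformation and the affine equivalence.
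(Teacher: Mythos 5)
The paper states Proposition \ref{p2.5.2} without proof (it is used as a black box in Propositions \ref{p2.5.3} and \ref{p2.5.4}), so there is no argument of the paper's to compare yours against; I can only assess your proposal on its own terms, and it is correct and essentially complete. The exponent check is immediate from the paper's definition of the coordinate $(m_j',n_j')$ of $b_j$ under a flag, and $ad-bc=1$ is exactly the paper's normalization $m_{i_1}n_{i_2}-m_{i_2}n_{i_1}=1$ for a flag of the finest triangulation, so the substitution is an automorphism of $(\mathbb{C}^*)^2$. The heart of the matter is, as you say, the multiplicative identity
\begin{equation*}
a_j^{\sigma_1}(q)=a_j^{\sigma_2}(q)\,a_{i_1}^{\sigma_1}(q)^{m_j'}\,a_{i_2}^{\sigma_1}(q)^{n_j'}\,a_{i_3}^{\sigma_1}(q)^{1-m_j'-n_j'},
\end{equation*}
and your route to it is sound: the Calabi--Yau normalization $\langle e_3^*,b_k\rangle=1$ makes the barycentric weights sum to $1$, so the affine relation in $N'$ lifts to the linear relation $b_j=m_j'b_{i_1}+n_j'b_{i_2}+(1-m_j'-n_j')b_{i_3}$ in $N$ and hence $\ell_j\in L$; the pairing $\langle D_k,\ell\rangle=\langle D_k^{\mathbb{T}},\ell\rangle$ is well defined on $L^v=\widetilde{M}/\phi^v(M)$ because $\phi^v(M)$ annihilates $L$; and evaluating $\langle H_l,\ell_j\rangle$ in the two bases $\{D_k\}_{k\in I_{\sigma_1}}$ and $\{D_k\}_{k\in I_{\sigma_2}}$ gives the additive form of the identity. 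Your zero-convention $s_{l,k}^{\sigma}:=0$ for $k\in I_{\sigma}'$ is indeed the right one, since it is exactly what makes $a_k^{\sigma}(q)=\prod_l q_l^{s_{l,k}^{\sigma}}$ valid for \emph{all} $k$ and hence makes the final exponentiation uniform in $j$ (including $j\in I_{\sigma_1}'$ or $j\in I_{\sigma_2}'$, where one checks the formula degenerates correctly to $(m_j',n_j')\in\{(1,0),(0,1),(0,0)\}$ and $a_j^{\sigma_2}=1$). The only genuinely loose end you flag, the case bookkeeping, resolves exactly as you anticipate, so the proposal stands.
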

By the affine equivalence, when we want to study propositions of one fixed flag or we need a fixed flag, we may do the coordinate change and send the three vertices of the flag to $(0,0),(0,1),(1,0)$.

Two deeper and more useful propositions about mirror curves show the distribution rules of $a_i(q)$ at each lattice point.

\begin{proposition}
\label{p2.5.3}
In $\Sigma$, if $b_{i_1}$ directly connects $b_{i_2}$ by a 1-cell in $T_{\Sigma}$ and $2b_{i_2}=b_{i_1}+b_{i_3}$, then $\frac{a_{i_1}(q)a_{i_3}(q)}{a_{i_2}(q)^2}$ is a non-constant monomial of $q$.
\end{proposition}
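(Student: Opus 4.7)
The plan is to translate the ratio into the pairing of a concrete lattice element $\ell\in L$ with the basis $\{H_1,\dots,H_p\}$ of $L_{\mathbb{Q}}^v$, and then observe that a nonzero element of $L$ cannot be killed by every element of a basis of $L^v$.

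First I would fix the reference cone $\sigma\in\Sigma(3)$ and unwind the definitions. The exponent of $q_j$ in $a_i^{\sigma}(q)$ is the coefficient of $D_i$ in the expansion $H_j=\sum_{k\in I_\sigma}s_{j,k}^\sigma D_k$, extended by zero when $i\in I_\sigma'$ to match $a_i^\sigma(q)=1$. Consequently, the exponent of $q_j$ in $\dfrac{a_{i_1}^\sigma(q)\,a_{i_3}^\sigma(q)}{a_{i_2}^\sigma(q)^{2}}$ equals $s_{j,i_1}^\sigma+s_{j,i_3}^\sigma-2s_{j,i_2}^\sigma$.

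Next I would introduce the curve class
\[
\ell \;:=\; \tilde b_{i_1}+\tilde b_{i_3}-2\tilde b_{i_2}\in \tilde N.
\]
The hypothesis $2b_{i_2}=b_{i_1}+b_{i_3}$ reads exactly as $\phi(\ell)=0$, so $\ell\in L=\ker\phi$. Moreover $\ell\neq 0$, because the coefficient of $\tilde b_{i_1}$ in $\ell$ equals $1$ (and $\tilde b_{i_1}$ is a basis vector of $\tilde N$ distinct from $\tilde b_{i_2}$ and $\tilde b_{i_3}$). Under the induced pairing $L^v\times L\to \mathbb{Z}$ coming from the inclusion $L\hookrightarrow\tilde N$ and $\tilde M\twoheadrightarrow L^v$, one has $\langle D_k,\tilde b_i\rangle=\delta_{k,i}$, so expanding $H_j$ in the $D_k$ basis gives
\[
\langle H_j,\ell\rangle \;=\; s_{j,i_1}^\sigma+s_{j,i_3}^\sigma-2s_{j,i_2}^\sigma,
\]
which matches the exponent computed above.

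Finally, $L$ is a rank-$p$ lattice and $\{H_1,\dots,H_p\}$ is a rational basis of $L^v$, so the pairing map $L_{\mathbb Q}\to \mathbb{Q}^p$, $\ell\mapsto(\langle H_j,\ell\rangle)_{j=1}^p$, is an isomorphism. Therefore $\ell\neq 0$ forces the exponent vector $(\langle H_j,\ell\rangle)_j$ to be nonzero, and $\dfrac{a_{i_1}(q)a_{i_3}(q)}{a_{i_2}(q)^{2}}$ is a non-constant monomial in $q_1,\dots,q_p$. The only real subtlety I anticipate is keeping the index conventions straight when reading off exponents from the definition of $a_i^\sigma(q)$ (the symbol $s_{\cdot,\cdot}^\sigma$ plays one role in the change-of-basis formula and another in the exponent); once that bookkeeping is in place, the argument is pure linear algebra, and notably does not require any positivity from the Nef condition on $\{H_j\}$.
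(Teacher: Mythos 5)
Your proof is correct, but it takes a genuinely different route from the paper's. The paper argues by reduction to a base case: when $i_2=3$ and $i_1=2$ the ratio collapses to $a_{i_3}(q)$, which is non-constant because the matrix $S_\sigma=[s^{\sigma}_{i,j}]$ is non-degenerate; the general case is then handled by choosing a flag $(\tau,\sigma)$ with $I'_{\tau}=\{i_1,i_2\}$ and using the affine equivalence of Proposition \ref{p2.5.2} to identify $\frac{a_{i_1}(q)a_{i_3}(q)}{a_{i_2}(q)^2}$ with the coefficient $a'_{i_3}(q)$ of the transformed mirror curve. You instead read off the exponent vector of the ratio as $(\langle H_j,\ell\rangle)_{j=1}^{p}$ for the explicit relation $\ell=\tilde{b}_{i_1}+\tilde{b}_{i_3}-2\tilde{b}_{i_2}\in L$ (note the third coordinates give $1+1-2=0$, which is where the Calabi--Yau normalization enters to put $\ell$ in $\ker\phi$), and then use that a nonzero element of $L_{\mathbb{Q}}$ cannot pair to zero against a basis of $L^{v}_{\mathbb{Q}}$. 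Both arguments ultimately rest on the same non-degeneracy of the chosen basis $\{H_j\}$, but yours is more direct and strictly more general: it never uses the hypothesis that $b_{i_1}$ and $b_{i_2}$ are joined by a $1$-cell of $T_{\Sigma}$ (the paper needs that only to guarantee the existence of the auxiliary flag), and the same identity $\prod_i a_i(q)^{c_i}=\prod_j q_j^{\langle H_j,\sum_i c_i\tilde{b}_i\rangle}$ immediately yields Proposition \ref{p2.5.4} and the more general non-constancy claim about $\frac{a_{j_1}(q)\cdots a_{j_k}(q)}{a_i(q)^k}$ used in the proof of Theorem \ref{thm:subdivision-triangulation}. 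Two minor points of bookkeeping: your transposed reading of the exponents (the exponent of $q_j$ in $a_i^{\sigma}$ is the coefficient of $D_i$ in $H_j$) is indeed the only reading under which the paper's definition typechecks and under which Propositions \ref{p2.5.3} and \ref{p2.5.4} hold, so flagging it was appropriate; and the statement $\langle D_k,\tilde{b}_i\rangle=\delta_{k,i}$ should strictly be phrased as pairing the lift $D_k^{\mathbb{T}}\in\tilde{M}$ against elements of $\tilde{N}$, the point being that the induced pairing on $L^{v}\times L$ is independent of the lift since $M$ annihilates $\ker\phi$.
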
 
Figure \ref{f6} shows this case.

\begin{figure}
    \centering
    \includegraphics[scale=0.4]{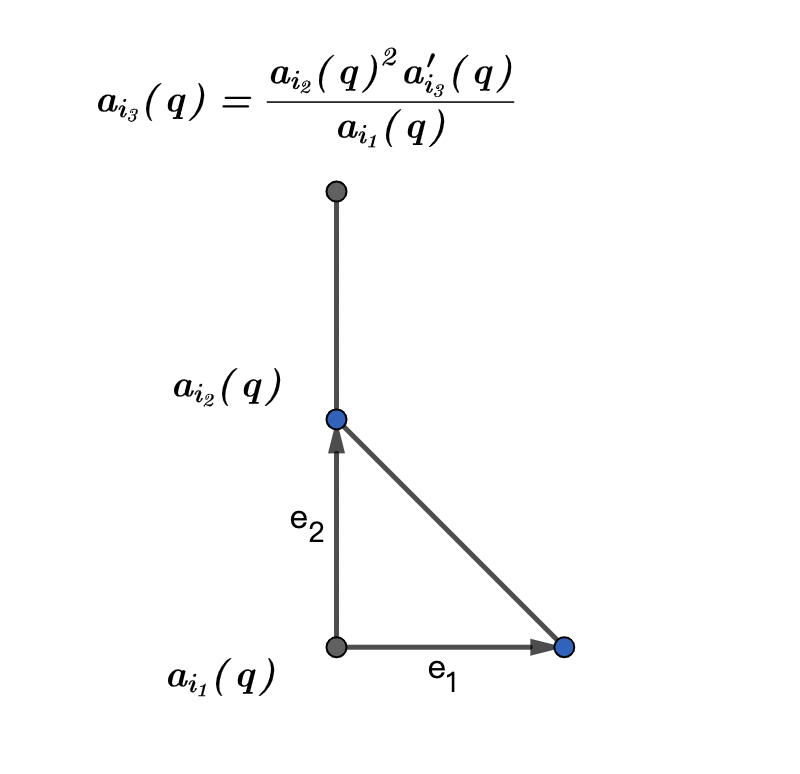}
    \caption{}
    \label{f6}
\end{figure}

\begin{proof}
When $i_2=3,i_1=2$, the proof is done because with $\boldsymbol{b_{i_3}}=(-1,0)$, $a_{i_3}(q)$ is a non-constant monomial of $q$. For the general case, we hope to simplify the case to $i_2=3,i_1=2$.

Choose a flag $(\tau,\sigma)$ which makes $I_{\tau}=\{i_1,i_2\}$, $I_{\sigma}=\{i_1,i_2,j\}$. Then the coordinate of $b_{i_3}$ under the flag $(\tau,\sigma)$ is $(0,-1)$, and $a_{i_3}'(q)$ is a non-constant monomial of $q$. Under the affine coordinate change $$X_{(\tau,\sigma)}=X^aY^b\frac{a_{i_1}(q)}{a_{i_2}(q)}$$
$$Y_{(\tau,\sigma)}=X^cY^d\frac{a_{j}(q)}{a_{i_2}(q)},$$ we have $$a_{i_2}(q)X^{m_{i_2}}Y^{n_{i_2}}a_{i_3}'(q)X_{(\tau,\sigma)}^{-1}=a_{i_3}(q)X^{m_{i_3}}Y^{n_{i_3}}.$$ Therefore $\frac{a_{i_1}(q)a_{i_3}(q)}{a_{i_2}(q)^2}=a_{i_3}'(q)$
\end{proof}
\begin{proposition}
\label{p2.5.4}
For a flag $(\tau,\sigma)$ with $I_{\sigma}'=\{i_1,i_2,i_3\}$ and $I_{\tau}'=\{i_2,i_3\}$, and another vertex $b_{i_4}$ which makes $i_4\in I_{\sigma}$,  $\frac{a_{i_4}(q)(a_{i_3}(q))^{m_i'+n_i'-1}}{a_{i_1}(q)^{m_i'}a_{i_2}(q)^{n_i'}}$ is a non-constant monomial of $q$.
\end{proposition}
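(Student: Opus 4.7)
The plan is to deduce this directly from the affine equivalence of Proposition~\ref{p2.5.2} by matching the coefficient of the monomial corresponding to $b_{i_4}$ on both sides of the identity.

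First, I would apply Proposition~\ref{p2.5.2} with $(\tau_2,\sigma_2) = (\tau,\sigma)$, obtaining
\[
H(X,Y,q) = a_{i_3}(q)\,X^{m_{i_3}}Y^{n_{i_3}}\,H_{(\tau,\sigma)}\bigl(X_{(\tau,\sigma)},Y_{(\tau,\sigma)},q\bigr),
\]
with $X_{(\tau,\sigma)} = X^{a}Y^{b}\,a_{i_1}(q)/a_{i_3}(q)$ and $Y_{(\tau,\sigma)} = X^{c}Y^{d}\,a_{i_2}(q)/a_{i_3}(q)$, where $(a,b) = b_{i_1}-b_{i_3}$ and $(c,d) = b_{i_2}-b_{i_3}$. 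Substituting into the single term $a_{i_4}^\sigma(q)\,X_{(\tau,\sigma)}^{m_{i_4}'}Y_{(\tau,\sigma)}^{n_{i_4}'}$ of $H_{(\tau,\sigma)}$ indexed by $b_{i_4}$, and using the defining identity $b_{i_4} = m_{i_4}'(b_{i_1}-b_{i_3}) + n_{i_4}'(b_{i_2}-b_{i_3}) + b_{i_3}$ of the flag coordinates, the $X,Y$ exponents on the right assemble to exactly $(m_{i_4},n_{i_4})$, which is what is needed to match the term $a_{i_4}(q)X^{m_{i_4}}Y^{n_{i_4}}$ on the left. Collecting the $q$-dependent factors yields
\[
a_{i_4}(q) = a_{i_3}(q)\,a_{i_4}^\sigma(q)\,\left(\frac{a_{i_1}(q)}{a_{i_3}(q)}\right)^{m_{i_4}'}\left(\frac{a_{i_2}(q)}{a_{i_3}(q)}\right)^{n_{i_4}'},
\]
which rearranges to
\[
\frac{a_{i_4}(q)\,a_{i_3}(q)^{m_{i_4}'+n_{i_4}'-1}}{a_{i_1}(q)^{m_{i_4}'}\,a_{i_2}(q)^{n_{i_4}'}} = a_{i_4}^\sigma(q).
\]

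Second, I would invoke the construction of the coefficients in Section~\ref{2.5}: since $i_4 \in I_\sigma$, by definition $a_{i_4}^\sigma(q) = \prod_{j=1}^{p} q_j^{s_{i_4,j}^\sigma}$ is automatically a monomial in $q$ with nonnegative integer exponents. To upgrade this to \emph{non-constant}, I would use the fact that the basis $\{H_1,\dots,H_p\}$ of $L^{v}_{\mathbb{Q}}$ was chosen so that $S_\sigma = [s_{i,j}^\sigma]_{p\times p}$ is non-degenerate: the $i_4$-th row of $S_\sigma$ cannot be identically zero, for otherwise $S_\sigma$ would be singular. Hence at least one exponent $s_{i_4,j}^\sigma$ is strictly positive, and $a_{i_4}^\sigma(q)$ genuinely depends on some $q_j$.

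The only real technicality is the bookkeeping of the coordinate substitution so that the $X,Y$ exponents cancel cleanly; once this is done, the coefficient identity and the monomial conclusion fall out. I do not anticipate a substantive obstacle, and the argument runs in parallel to (and indeed generalizes) the reduction used in the proof of Proposition~\ref{p2.5.3}.
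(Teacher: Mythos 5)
Your argument is correct and follows essentially the same route as the paper's own proof: apply the affine equivalence of Proposition~\ref{p2.5.2} to the flag $(\tau,\sigma)$, match the coefficient of the monomial at $b_{i_4}$, and observe that the resulting quantity is $a_{i_4}^{\sigma}(q)$, a non-constant monomial of $q$ since $i_4\in I_{\sigma}$. Your bookkeeping even gives the exponent $m_{i_4}'+n_{i_4}'-1$ consistent with the statement (the paper's displayed formula has an off-by-one slip, writing $m_i'+n_i'$), and your explicit appeal to the non-degeneracy of $S_{\sigma}$ spells out what the paper dismisses as the ``easy'' base case.
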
 
Figure \ref{f7} shows this case.
\begin{figure}
    \centering
    \includegraphics[scale=0.4]{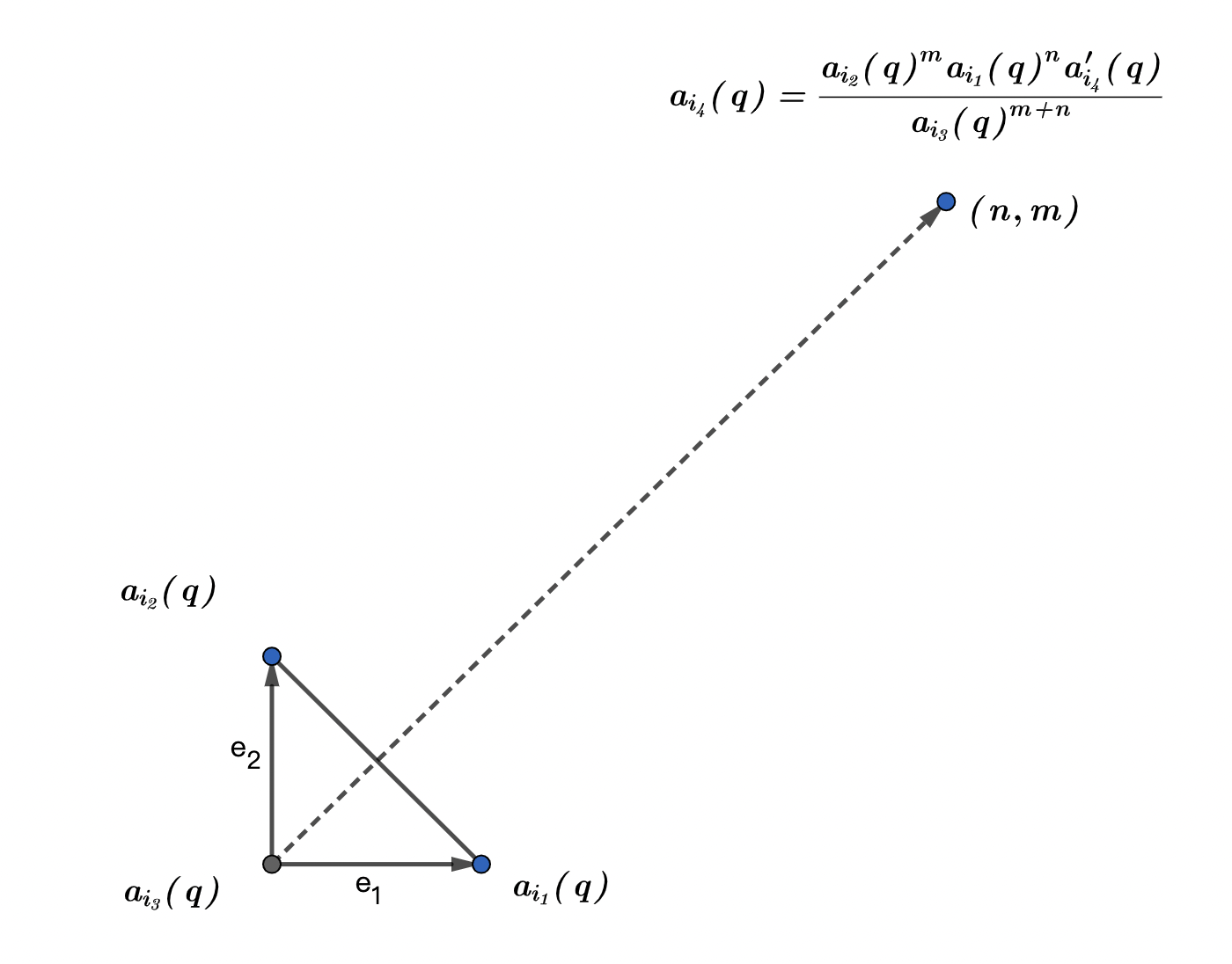}
    \caption{}
    \label{f7}
\end{figure}
\begin{proof}
As the proof of Proposition \ref{p2.5.3}, firstly, the case with $I_{\sigma}=\{1,2,3\}$, $I_{\tau}=\{2,3\}$ is easy. Then we consider the affine coordinate change under flag $(\tau,\sigma)$. Because $b_{i_4}$ has coordinate $(m_{i_4}',n_{i_4}')$, we have $$a_{i_4}'(q)=\frac{a_{i_4}(q)(a_{i_3}(q))^{m_i'+n_i'}}{a_{i_1}(q)^{m_i'}a_{i_2}(q)^{n_i'}}.$$
\end{proof}

\begin{aremark}
Proposition \ref{p2.5.3} could be seen as a special case of Proposition \ref{p2.5.4}. However, Proposition \ref{p2.5.3} does not require a fixed flag, so sometimes it is more convenient to apply Proposition \ref{p2.5.3} instead of Proposition \ref{p2.5.4}.
\end{aremark}

\begin{aremark}
One could also give the definition of the mirror curve with parameter $q=(q_1,...,q_p)\in (\mathbb{C^*})^p$ if we define the smooth toric Calabi-Yau 3-fold $X_{\Sigma}$ with the homogeneous coordinate. Another formal definition of the mirror curve of
$X_{\Sigma}$ is given by $$C_q=\{1+x_2+...+x_{3+p}=0|q_i=\prod\limits_{m=2}^{3+k}x_m^{T_{i_m}},\quad i=1,...,p\}.$$ We call $\frac{1}{|q|}$ the radius of parameter $q$. Then people always study propositions of mirror curves with a large radius parameter. For the convention, if we say a statement for the mirror curve holds near the large radius limit, we mean $\exists \epsilon>0$, for the mirror curve with any $q$ has a radius larger than $\frac{1}{\epsilon}$, or equivalently, $|q|<\epsilon$, the statement holds.
\end{aremark}

\begin{aremark}The importance of such propositions is due to when $(m_{i_4},n_{i_4})\ne(0,0)$, $(0,1)$, $(1,0)$, especially with an additional condition that $m_{i_4}$ and $n_{i_4}$ are both non-negative, near the large radius limit, we could easily compare the norm of $a_{i_4}(q)$ to the norms of $a_{i_1}(q)$, $a_{i_2}(q)$, $a_{i_3}(q)$.
\end{aremark}
\subsection{Tropical geometry and amoeba}
\label{2.6}
In this chapter, we recall some basic notations and tools of tropical geometry. First, let us recall the notation of amoeba. Consider the map given below.
$$ 
\begin{array}{lcl}
     \mu:(\mathbb{C}^*)^n\to \mathbb{R}^n\\
\mu(z_1,...,z_n)=(log|z_1|,...,log|z_n|)
\end{array}  $$

We define such a map by the amoeba map.

\begin{definition}Let $K_F$ be the zeroes in $(\mathbb{C^*})^n$ of a holomorphic function $F(z_1,...,z_n)$ with n variables. We define the amoeba of $F$ to be the image $\mu(K_F)$. We write the amoeba of $F$ by $A_F$.
\end{definition}
Amoeba is the core object of tropical geometry. There are many great propositions about the amoeba. Next, we list one and give a brief idea of the proof. If anyone is interested in the details, he could refer to \cite{yger2012tropical}.

\begin{proposition}If $F$ is a Laurent polynomial with the Newton polytope $\Delta_F$. We could define an injective map $\psi$ from the connected components of $\mathbb{R}^n-A_F$ to $\Delta_F\bigcap \mathbb{Z}^n$. 
\end{proposition}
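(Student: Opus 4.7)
The plan is to construct $\psi$ by the classical \emph{order map}, equivalently the gradient of the Ronkin function. For any $x\in\mathbb{R}^n\setminus A_F$ the torus $\mu^{-1}(x)=\{z:|z_j|=e^{x_j}\}$ is disjoint from $K_F$, so for $j=1,\dots,n$ the integrals
$$\nu_j(x)\;=\;\frac{1}{(2\pi i)^n}\int_{\mu^{-1}(x)}\frac{z_j\,\partial_{z_j}F(z)}{F(z)}\,\frac{dz_1}{z_1}\wedge\cdots\wedge\frac{dz_n}{z_n}$$
make sense. I would define $\psi(E)=\nu(x)$ for any $x$ in a connected component $E$ of $\mathbb{R}^n\setminus A_F$; since the integrand is holomorphic in a neighbourhood of $\mu^{-1}(x)$ and depends only on the homology class of the cycle, $\nu$ is locally constant on $\mathbb{R}^n\setminus A_F$ and thus well defined on components.

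Next I would verify the two basic properties of $\nu$. Integrality $\nu(x)\in\mathbb{Z}^n$ follows by applying the argument principle one variable at a time: for fixed $|z_k|=e^{x_k}$, $k\neq j$, the inner integral counts (with multiplicity) zeroes minus poles of a Laurent polynomial in $z_j$ restricted to a circle, which is an integer, and continuity in the other variables forces the total to be an integer. To show $\nu(x)\in\Delta_F$, I would pass to the Ronkin function
$$N_F(x)\;=\;\frac{1}{(2\pi i)^n}\int_{\mu^{-1}(x)}\log|F(z)|\,\frac{dz_1}{z_1}\wedge\cdots\wedge\frac{dz_n}{z_n},$$
which is convex on $\mathbb{R}^n$, smooth on $\mathbb{R}^n\setminus A_F$, with $\partial N_F/\partial x_j=\nu_j$ there. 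Writing $F(z)=\sum_{\alpha\in\Delta_F\cap\mathbb{Z}^n}c_\alpha z^\alpha$, the elementary estimate $\log|F(z)|\le\max_\alpha\langle\alpha,x\rangle+O(1)$ forces $N_F(x)\le\max_\alpha\langle\alpha,x\rangle+O(1)$, so by convexity every gradient of $N_F$ lies in the convex hull of $\{\alpha\}$, i.e.\ in $\Delta_F$.

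The main obstacle is injectivity of $\psi$. I would deduce it from the strict convexity of $N_F$. On any component $E$ the function $N_F$ coincides with the affine map $x\mapsto\langle\nu(E),x\rangle+c_E$; if two distinct components $E\neq E'$ satisfied $\nu(E)=\nu(E')$, convexity of $N_F$ together with the fact that $N_F$ strictly dominates the affine function $\langle\nu(E),x\rangle+c_E$ at points of $A_F$ separating $E$ from $E'$ would yield a contradiction along any segment from $E$ to $E'$. This forces the values of $\nu$ on different components to differ, giving the desired injection $\psi:\pi_0(\mathbb{R}^n\setminus A_F)\hookrightarrow\Delta_F\cap\mathbb{Z}^n$. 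The delicate point to be careful about is the strict inequality on $A_F$, which is really where non-vanishing of the Monge-Amp\`ere measure of $N_F$ along the amoeba enters; everything else is bookkeeping with residues.
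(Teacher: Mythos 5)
Your construction of $\psi$ via the order map, the integrality argument, and the containment $\nu(x)\in\Delta_F$ are all sound; for the containment you take a genuinely different route from the paper (convexity of the Ronkin function plus the growth bound $N_F(x)\le\max_\alpha\langle\alpha,x\rangle+O(1)$, which forces every subgradient into the convex hull of the exponents), whereas the paper bounds $\langle a,\nu\rangle$ directly by interpreting it as the winding number of $\theta\mapsto F(z_1e^{ia_1\theta},\dots,z_ne^{ia_n\theta})$ and comparing with the degree of that loop as a Laurent polynomial in $e^{i\theta}$. Both work, and your version is arguably cleaner since it needs no case analysis over directions $a$.

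The gap is in injectivity. Your reduction is fine up to a point: if $\nu(E)=\nu(E')=\nu$, then $h=N_F-\langle\nu,\cdot\rangle$ is convex, equal to $c_E$ on $E$ and $c_{E'}$ on $E'$, and since $\nu$ is a subgradient on each component one gets $c_E=c_{E'}=\min h$, so the (convex) minimum set of $h$ contains a segment joining $E$ to $E'$ and $N_F$ is affine along that whole segment. But the statement you then need --- that $N_F$ strictly dominates $\langle\nu,x\rangle+c_E$ somewhere on $A_F$ along that segment, i.e.\ that $N_F$ cannot be affine on a segment crossing the amoeba --- is precisely the content of injectivity, and you have not supplied it. Deferring it to ``non-vanishing of the Monge--Amp\`ere measure of $N_F$ along the amoeba'' does not close the gap: that positivity is itself a nontrivial theorem (Passare--Rullg{\aa}rd, and in the strong form only established for $n=2$), and even granted, positivity of the Monge--Amp\`ere measure on a set does not imply strict convexity of the restriction of a convex function to every line segment meeting that set (a minimum set of a convex function can be a nondegenerate segment while the function is nowhere affine on open sets). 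The paper's proof fills exactly this hole by an elementary topological argument: along a line in a rational direction $a$ from $x_0\in E$ to $x_1\in E'$, the integer $\langle a,\nu(x)\rangle$ is a linking number of the torus $\mu^{-1}(x)$ with $K_F$, and by the argument principle applied to a one-variable slice of $F$ it strictly increases when the line crosses $A_F$, giving $\langle a,\nu(E)\rangle<\langle a,\nu(E')\rangle$ directly. You should either adopt that argument or prove the one-dimensional strict convexity of $N_F$ across the amoeba (e.g.\ by slicing the Ronkin integral and using Jensen's formula in one variable); as it stands the ``delicate point'' you flag is the whole theorem.
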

\begin{proof}: We skip some details and write down the proof briefly. For any point $x_0=(x_1,...,x_n)$ in $\mathbb{R}^n-A_F$, we consider $\mu^{-1}(x_0)\subset (\mathbb{C}^*)^n$ which does not intersect $K_F$. Then the degree of loop $\gamma_i:\theta\to F(e^{x_1},...,e^{x_i+i\theta},..,e^{x_n})$ gives an integer $v_{F,x_0,i}$. Thus we have constructed a map $\psi$ from $\mathbb{R}^n-A_F$ to a point of $\mathbb{Z}^n$ given by $$\psi(x_0)=(v_{F,x_0,1},...,v_{F,x_0,n}).$$ We write $v_{F,x_0}$ for short of $(v_{F,x_0,1},...,v_{F,x_0,n})$. Also, we could write the degree as Cauchy integral \begin{align*}
    v_{F,x_0,i}=\frac{1}{(2i\pi)^n}\int_{\mu^{-1}(x_0)}\frac{\partial F}{\partial z_i}(z)\frac{1}{F(z)}\bigwedge\limits_{j=1}^n\frac{dz_i}{z_i}.
\end{align*}

It is easy to see that the integer $v_{F,x_0,i}$ we just gave is constant on any connected component of $\mathbb{R}^n-A_F$ because the Cauchy integral changes continuously. Besides, by the integral, we take any preimage point $z=(z_1,...,z_n)\in (\mathbb{C}^*)^n$ of $x_0$ instead of  $(e^{x_1},...,e^{x_i+i\theta},..,e^{x_n})$. Denote such the degree of loop $F(z_1,...,z_ie^{i\theta},..,z_n)$ by $v_{F,x_0,z,i}$. Calculating $v_{F,x_0,z,i}$ by the Cauchy integral, we get the same result with $v_{F,x_0,i}$ because the integral change continuously in torus $\mu^{-1}(x_0)$. Then we define $\psi(C_{x_0})=v_{F,x_0}=v_{F,C_{x_0}}$, where $C_{x_0}$ is the connected component which contains $x_0$.

For any lattice point $a=(a_1,..,a_n)$ $\in \mathbb{Z}^n$, we know the degree of the loop \begin{align*}
    \gamma_F:\theta\in[0,2\pi]\to F(z_1e^{ia_1\theta},...,z_ne^{ia_n\theta})
    \end{align*} is $\langle a,v_{F,C_{x_0}}\rangle$. However, the degree of the loop must be no greater than the degree of $\gamma_F$ under $e^{i\theta}$. Thus $\langle a,v_{F,C_{x_0}}\rangle\le max\langle a,s\rangle_{s\in \Delta_F}$ for any $a,x_0$. Then we have $\psi(C_{x_0})=v_{F,C_{x_0}}\in \Delta_F$.

Next, we will prove the injectivity of $\psi$. If there exist two components $C_1\ne C_2$, but $\psi(C_1)=\psi(C_2)$, because there exists a line $l_i$ with a rational slope passing $x_0\in C_1$ and $x_1\in C_2$. We parametrize such line with the parameter $t$, and integer vector $a=(a_1,...,a_n)$. Then $l_i$ could be written $(x_{0,1}+a_1t,...,x_{0,n}+a_nt)$. We assume $x_1$ corresponds to a positive $t$ in the line. Because the degree of the loop which corresponding $(a_1,...,a_n)$ at $x_0$ could also be seen as the link number of the loop of $\mu^{-1}(x_0)$ with degree $(a_1,...,a_n)$ at each coordinate and $K_F$, and $l_i$ has passed the amoeba, the degree increases from $x_0$ to $x_1$. We have shown $\langle a,v_{F,C_1}\rangle<\langle a,v_{F,C_2}\rangle$. This is a contradiction. 
\end{proof}
Since then, for short, if $\psi(C)=\sigma$, we say that the component $C$ has the degree $\sigma$. At the end of this chapter, I will introduce the notation of the tropical Laurent polynomial. 

\begin{definition}The tropical Laurent polynomial is the Laurent polynomial under two operations: $\boxtimes$ and $\boxplus$.
For a finite index set $J$ and $w_i\in \mathbb{R}\cup \{-\infty\}$, $\boxplus$ and $\boxtimes$ are defined as follows: \begin{align*}\boxplus_{i\in J} w_i=max\{w_i\}_{i\in J},\end{align*}  \begin{align*}\boxtimes_{i\in J}w_i=\sum\limits_{i\in J}w_i.\end{align*}\end{definition} 

Specifically, the tropical Laurent polynomial with $n$ variables could be written as \begin{align*}\boxplus_{i\in J}a_i\boxtimes w_1^{c_{i_1}}\boxtimes...\boxtimes w_n^{c_{i_n}},\end{align*} where $c_{i_r}$ is an integer, and $w_r^{c_{i_r}}$ is short for $w_i\boxplus...\boxplus w_i$ which has total $c_{i_r}$ $w_i$.

Because the tropical Laurent polynomial could be written as $max\{ f_i\}_{i\in J}$, where $f_j=a_j+c_{j_1}w_1+...+c_{j_n}w_n$, we define the tropical hypersurface of a tropical Laurent polynomial to be the union of all the points in $(\mathbb{R}\cup \{-\infty\})^n$ which make at least two $i\in J$, denoted by $i_1,i_2$, satisfy $f_{i_1}=f_{i_2}=max\{ f_i\}_{i\in J}$. It is obvious that the tropical hypersurface consists of some segments, rays, and lines. Actually, the tropical hypersurface is exactly the set of critical points of $max\{ f_i\}_{i\in J}$. Since then, we write the tropical polynomial short for the tropical Laurent polynomial.

\section{Tropical deformation}
\label{3}
In this section, we mainly prove that near the large radius limit, the amoeba of the mirror curve $H=0$ for smooth toric Calabi-Yau 3-fold $X_{\Sigma}$ has a canonical deformation retract which gives the dual subdivision of $T_{\Sigma}$. We finish the proof by figuring out all connected components of $\mathbb{R}^2-A_H$ and directly calculating the tropical spine which we will later give a definition of. 

\begin{lemma1}
\label{l3.1}
For any non-zero Laurent polynomial $F:(\mathbb{C^*})^n\to\mathbb{C}$ with its amoeba $A_F$, $A_F$ has a tropical hypersurface of $\boxplus\tau_{F,C}\boxtimes w_1^{\sigma_{F,C,1}}\boxtimes w_2^{\sigma_{F,C,2}}...\boxtimes w_n^{\sigma_{F,C,n}}$ as a deformation retract. Here the boxplus is done over all connected components of $\mathbb{R}^n-A_F$ and $\sigma_{F,C}$ is the degree of $C$, $\tau_{F,C}$ is some constant related to $F$ and $C$. We define this tropical hypersurface by the tropical spine.
\end{lemma1}

\begin{proof} Firstly, we define the Ronkin function $R_F:\mathbb{R}^n\to \mathbb{R}$ as in \cite{yger2012tropical}: \begin{align*}R_F(x)&=\int_{(S^1)^n}log|F(e^{x+i\theta}))|d\sigma_{(S^1)^n}(\theta)\\&=\frac{1}{(2\pi)^n}\int_{[0,2\pi]^n}log|F(e^{x+i\theta})|d\theta_1...d\theta_n.\end{align*} The symbol $d\sigma_{(S^1)^n}(\theta)$ here is the normalized Haar measure of group $(S^1)^n$. Then $R_F$ is $C^{\infty}$ in any  connected component of $\mathbb{R}^n-A_F$ because the integral has no flaw points outside $A_F$. The integral is well-defined on $\mathbb{R}^n$ because $log|F|$ is plurisubharmonic\cite{yger2012tropical} on $\mathbb{R}^n$, i.e. subharmonic on every complex line. Then $R_F$ is a convex function. Thus the improper integral at any point gives a finite real value on $\mathbb{R}^n$.

We calculate the gradient of $R_F(x)$ \begin{align*}\frac{dR_F(x)}{dx_i}&=Re(\frac{1}{(2\pi)^n}\int_{[0,2\pi]^n}\frac{\frac{\partial F}{\partial x_j}}{F}(e^{x+i\theta})d\theta_1...d\theta_n)\\&=Re(\frac{1}{(2i\pi)^n}\int_{\mu^{-1}(x)}\frac{\partial F}{\partial z_i}(z)\frac{1}{F(z)}\bigwedge\limits_{j=1}^n\frac{dz_i}{z_i})\\&=Re(\sigma_{F,C,j})\\&=\sigma_{F,C,j}.\end{align*} Then $R_F(x)=\tau_{F,C}+\langle\sigma_{F,C},x\rangle$ on $C$, where $\tau_{F,C}$ is some constant related to $F$ and $C$.

Because $R_F$ is convex, if we extend $\tau_{F,C}+\langle\sigma_{F,C},x\rangle$ to another component $C'$ of $\mathbb{R}^n-A_F$ linearly, we have $$\tau_{F,C}+\langle\sigma_{F,C},x\rangle<\tau_{F,C'}+\langle\sigma_{F,C'},x\rangle \quad \forall x\in C'.$$ We have shown that the tropical spine lies in the amoeba $A_F$. By intuition, the amoeba retracts into the spine. The concrete retracting process is given in \cite{yger2012tropical}, one who has interest could find it.
\end{proof}
Generally speaking, even we have known that the amoeba of a Laurent polynomial $F$ has its tropical spine  $\boxplus_C\tau_{F,C}\boxtimes w_1^{\sigma_{F,C,1}}\boxtimes...\boxtimes w_n^{\sigma_{F,C,n}}$ as the deformation retract, it is still hard to calculate the tropical spine because it is hard to know exactly which lattice point in $\Delta_F$ corresponds to a connected component of $\mathbb{R}^n-A_F$. Besides, the calculation of the coefficient $\tau_{F,C}$ is difficult as well. However, the case is easy when one monomial in $F$ has the largest norm and is larger than the sum of norms of other monomials.

\begin{lemma1}
\label{l3.2}
In a Laurent polynomial \begin{align*}F(z)=\sum\limits_{\sigma\in\Delta_F}c_{\sigma}z^{\sigma},\end{align*} if there exist one monomial $c_{\sigma_0}z^{\sigma_0}$ and a point $z_0$ in $(\mathbb{C}^*)^n$ making \begin{align*}|c_{\sigma_0}z_0^{\sigma_0}|>\sum\limits_{\sigma\in\Delta_F,\sigma\ne \sigma_0}|c_{\sigma}z_0^{\sigma}|,\end{align*} obviously $x_0=log|z_0|\in \mathbb{R}^n$ lies in $\mathbb{R}^n-A_F$. Let $C_{x_0}$ be the connected component of $\mathbb{R}^n-A_F$ which contains $x_0$, Then the degree of $C_{x_0}$ equals $\sigma_0$.
\end{lemma1}

\begin{proof}: Define $$g(z)=\frac{\sum\limits_{\sigma\in\Delta_F,\sigma\ne \sigma_0}(c_{\sigma}z^{\sigma})}{c_{\sigma_0}z^{\sigma_0}}=\sum\limits_{\sigma\in\Delta_F,\sigma\ne \sigma_0}\frac{c_{\sigma}}{c_{\sigma_0}}z^{\sigma-\sigma_0}.$$ We do the direct calculation. 
\begin{align*}
    R_F(x_0)&=\frac{1}{(2\pi)^n}\int_{[0,2\pi]^n}log|F(z_0*e^{i\theta})|d\theta_1...d\theta_n
    \\&=\frac{1}{(2\pi)^n}\int_{[0,2\pi]^n}log|c_{\sigma_0}z_0^{\sigma_0}(1+\frac{\sum\limits_{\sigma\in\Delta_F,\sigma\ne \sigma_0}(c_{\sigma}z_0^{\sigma}e^{i<\sigma,\theta>})}{c_{\sigma_0}z_0^{\sigma_0}e^{i<\sigma_0,\theta>}})|d\theta_1...d\theta_n
    \\&=log|c_{\sigma_0}|+\langle x_0,\sigma_0\rangle+\int_{Log^{-1}(x_0)}log|1+\frac{\sum\limits_{\sigma\in\Delta_F,\sigma\ne \sigma_0}(c_{\sigma}z^{\sigma})}{c_{\sigma_0}z^{\sigma_0}}|d\sigma_{(S^1)^n}
    \\&=log|c_{\sigma_0}|+\langle x_0,\sigma_0\rangle+\frac{1}{(2\pi i)^n}\int_{Log^{-1}(x_0)}log|1+g(z)|\frac{dz_1...dz_n}{z_1...z_n}.
\end{align*}

Because $|g(z)|<1,\forall z\in Log^{-1}(x_0)$, we have \begin{align*}
    &\frac{1}{(2\pi i)^n}\int_{Log^{-1}(x_0)}log|1+g(z)|\frac{dz_1...dz_n}{z_1...z_n}\\&=Re(\frac{1}{(2\pi i)^n}\int_{Log^{-1}(x_0)}log(1+g(z))\frac{dz_1...dz_n}{z_1...z_n})\\&=Re(\frac{1}{(2\pi i)^n}\int_{Log^{-1}(x_0)}\sum\limits_{k=1}^{+\infty}(-1)^{k-1}\frac{g(z)^k}{k}\frac{dz_1...dz_n}{z_1...z_n})\\&=Re(\frac{1}{(2\pi i)^n}\int_{Log^{-1}(x_0)}\sum\limits_{k=1}^{+\infty}(-1)^{k-1}\frac{(\sum\limits_{\sigma\in\Delta_F,\sigma\ne \sigma_0}\frac{c_{\sigma}}{c_{\sigma_0}}z^{\sigma-\sigma_0})^k}{k}\frac{dz_1...dz_n}{z_1...z_n})\\&=Re(\frac{1}{(2\pi i)^n}\int_{Log^{-1}(x_0)}\sum\limits_{k=1}^{+\infty}(-1)^{k-1}(\sum\limits_{\sigma_j\in\Delta_F-\sigma_0,j=1,2,..,k}\frac{c_{\sigma_1}...c_{\sigma_k}}{kc_{\sigma_0}^k}*\\&z^{\sigma_1+...+\sigma_k-k\sigma_0})\frac{dz_1...dz_n}{z_1...z_n}).\end{align*}
It is known that when $\sigma\ne (-1,...,-1)$,\begin{align*}
    \int_{Log^{-1}(x_0)}z^{\sigma}dz_1...dz_n=0,
\end{align*} and\begin{align*}
    \int_{Log^{-1}(x_0)}\frac{dz_1...dz_n}{z_1...z_n}=(2\pi i)^n.
\end{align*} Thus \begin{align*}
    &\frac{1}{(2\pi i)^n}\int_{Log^{-1}(x_0)}log|1+g(z)|\frac{dz_1...dz_n}{z_1...z_n}\\&=Re(\frac{1}{(2\pi i)^n}\int_{Log^{-1}(x_0)}\sum\limits_{k=1}^{+\infty}(-1)^{k-1}(\sum\limits_{\sigma_j\in\Delta_F-\sigma_0,j=1,2,..,k}\frac{c_{\sigma_1}...c_{\sigma_k}}{kc_{\sigma_0}^k}*\\&z^{\sigma_1+...+\sigma_k-k\sigma_0})\frac{dz_1...dz_n}{z_1...z_n})\\&=Re(\frac{1}{(2\pi i)^n}\int_{Log^{-1}(x_0)}(\sum\limits_{k,\sigma_1+...+\sigma_k=k\sigma_0}\frac{c_{\sigma_1}...c_{\sigma_k}}{kc_{\sigma_0}^k})\frac{dz_1...dz_n}{z_1...z_n})\\&=Re(\sum\limits_{k,\sigma_1+...+\sigma_k=k\sigma_0}\frac{c_{\sigma_1}...c_{\sigma_k}}{kc_{\sigma_0}^k}).
\end{align*}
Because $$\sum\limits_{\sigma\in\Delta_F,\sigma\ne \sigma_0}|\frac{c_{\sigma}}{c_{\sigma_0}}z^{\sigma-\sigma_0}|<1,\forall z\in Log^{-1}(x_0),$$ then \begin{align*}
&\sum\limits_{k,\sigma_1+...+\sigma_k=k\sigma_0}|\frac{c_{\sigma_1}...c_{\sigma_k}}{kc_{\sigma_0}^k}|\\       
&<\sum\limits_{k=1}^{+\infty}\sum\limits_{\sigma_j\in\Delta_F-\sigma_0,j=1,2,..,k}|\frac{c_{\sigma_1}...c_{\sigma_k}}{kc_{\sigma_0}^k}z^{\sigma_1+...+\sigma_k-k\sigma_0}|\\&=\sum\limits_{k=1}^{+\infty}(-1)^{k-1}\frac{(\sum\limits_{\sigma\in\Delta_F,\sigma\ne \sigma_0}|\frac{c_{\sigma}}{c_{\sigma_0}}z^{\sigma-\sigma_0}|)^k}{k}<+\infty.\end{align*} Therefore, the series $\sum\limits_{k,\sigma_1+...+\sigma_k=k\sigma_0}\frac{c_{\sigma_1}...c_{\sigma_k}}{kc_{\sigma_0}^k}$ is absolutely convergent. 
Then we get $$R_F(x_0)=log|c_{\sigma_0}|+\langle x_0,\sigma_0\rangle+Re(\sum\limits_{k,\sigma_1+...+\sigma_k=k\sigma_0}\frac{c_{\sigma_1}...c_{\sigma_k}}{kc_{\sigma_0}^k}).$$
When $x_0$ makes one point $z_0$ in $\mu^{-1}(x_0)$ and $\sigma_0$ satisfy the condition of Lemma \ref{l3.2}, which also guarantees all point $z$ in  $\mu^{-1}(x_0)$ satisfy the same condition, there exist a neighborhood of $x_0$ in $\mathbb{R}^n$, denoted by $U_{x_0}$ here, making all $z\in \mu^{-1}(U_{x_0})$ and $\sigma_0$ satisfying the condition of Lemma \ref{l3.2}. Equivalently, the condition of Lemma \ref{l3.2} is an open condition on $\mathbb{R}^n$ for $x_0$. Then for any $x\in U_{x_0}$, we have $$R_F(x)=log|c_{\sigma_0}|+\langle x,\sigma_0\rangle+Re(\sum\limits_{k,\sigma_1+...+\sigma_k=k\sigma_0}\frac{c_{\sigma_1}...c_{\sigma_k}}{kc_{\sigma_0}^k}).$$ Finally, we know that \begin{align*}\sigma_{F,C_{x_0}}&=\sigma_0 \\ \tau_{F,C_{x_0}}&=log|c_{\sigma_0}|+Re(\sum\limits_{k,\sigma_1+...+\sigma_k=k\sigma_0}\frac{c_{\sigma_1}...c_{\sigma_k}}{kc_{\sigma_0}^k}). \end{align*} 
\end{proof}
Applying Lemma \ref{l3.2} to the case of the mirror curve near the large radius limit, we have the following theorem:

\begin{theorem1}
\label{t3.3}
Near the large radius limit, for the mirror curve $H(X,Y,q)=0$ of a smooth toric Calabi-Yau 3-fold $X_{\Sigma}$, the injective map $\psi$ from the connected components of $\mathbb{R}^2-A_H$ to $\Delta_H\cap Z^2$ is also surjective. 
\end{theorem1}

\begin{proof} When $\sigma=(m_i,n_i)\in \Delta_H\cap Z^2$, by applying Lemma \ref{l3.2}, we only need to find a point $z_{\sigma}=(x_{\sigma},y_{\sigma})$ which makes the norm of $a_i(q)x_{\sigma}^{m_i}y_{\sigma}^{n_i}$ larger than the sum of norms of all other monomials. When $\sigma=(0,0)=(m_3,n_3)$, $z_{\sigma}=(\frac{1}{3},\frac{1}{3})$ satisfies the condition near the large radius limit. That is because when $z_{\sigma}=(\frac{1}{3},\frac{1}{3})$, the sum of all norms of monomials $a_i(q)x_{\sigma}^{m_i}y_{\sigma}^{n_i}$, $i\ne3$ is a polynomial of $|q|$ with a constant term $\frac{2}{3}$. Near the large radius limit, we have $$1>\frac{1}{3}+\frac{1}{3}+\sum\limits_{i=4}^{p+3}\frac{|a_i(q)|}{3^{m_i+n_i}}.$$ Applying Lemma \ref{l3.2}, we know that $(0,0)$ is in the image of $\psi$.

Next, for any lattice point $b_i$ of $\Delta_H\cap Z^2$, we see it as the origin of a flag $(\sigma,\rho)$. Because $H_{(\sigma,\rho)}$ is affine equivalent to $H=H_{(\sigma_0,\rho_0)}$, the corresponding coordinate change preserves the norm ratio, and such coordinate change gives an isomorphism from $(\mathbb{C}^*)^2$ to $(\mathbb{C}^*)^2$, the lattice point $b_i$ has the non-empty preimage of $\psi$.
\end{proof}
Now we have found all the connected components of $\mathbb{R}^2-A_H$. Thus the tropical spine of the mirror curve could be written as \begin{align*}\boxplus_{(m_i,n_i)\in \Delta_H}log|a_i(q)|\boxtimes Re(\sum\limits_{k,b_{j_1}+...+b_{j_k}=kb_{j_i}}\frac{a_{j_1}(q)...a_{j_k}(q)}{ka_{i}(q)^k})\boxtimes w_1^{m_i}\boxtimes w_2^{n_i}.\end{align*}

The tropical hypersurface gives a convex subdivision\cite{passare2000amoebas} of $\mathbb{R}^2$ denoted by $T$. Here the 2-cells of the subdivision are all linear parts of the tropical polynomials, the 1-cells are all the critical segments, rays, and lines. For this subdivision, we have the following theorem.

\begin{theorem1}
\label{thm:subdivision-triangulation}   
The subdivision $T$ given by the tropical hypersurface is dual to the triangulation $T_{\Sigma}$ near the large radius limit.  
\end{theorem1}

\begin{proof}
We could write the tropical polynomial as $$S(w)=max_{i=1,2,...,p+3}\{\gamma_i+m_iw_1+n_iw_2\},$$
 where $$\gamma_i=log|a_i(q)|+Re(\sum\limits_{k,b_{j_1}+...+b_{j_k}=kb_{j_i}}\frac{a_{j_1}(q)...a_{j_k}(q)}{ka_{i}(q)^k}).$$

First, we construct the duality of all 1-cells of $T_{\Sigma}$ in the subdivision $T$. For any two fixed vertices $b_{i_1}$ and $b_{i_2}$, we define \begin{align*}
    \sigma_{i_1,i_2}^*=\{(w_1,w_2)|S(w)=\gamma_{i_1}+m_{i_1}w_1+n_{i_1}w_2=\gamma_{i_2}+m_{i_2}w_1+n_{i_2}w_2\}.
\end{align*} Obviously, $\sigma_{i_1,i_2}^*$ is either a 1-cell including segment and ray, or an empty set. Thus, we only need to show near the large radius limit, $\sigma_{i_1,i_2}^*$ is non-empty iff there's a 1-cell in $T_{\Sigma}$ which connects $b_{i_1}$ and $b_{i_2}$. 

In order to calculate the spine, we will show $$\frac{a_{j_1}(q)...a_{j_k}(q)}{ka_{i}(q)^k}$$ is a non-constant monomial of $q$ for any $\boldsymbol{b_{j_1}}+...+\boldsymbol{b_{j_k}}=k\boldsymbol{b_i},j_r\ne i.$ When $i=3$, because $\boldsymbol{b_{j_1}}+...+\boldsymbol{b_{j_k}}=k\boldsymbol{b_i},$ there must be some $j_r\ne 1,2$. Then the case is easy because for any $i\ne 1,2,3$, $a_i(q)$ is a non-constant monomial of $q$. For a general $i$, we consider a flag $(\sigma,\tau)$ which makes $I_{\sigma}'=\{t_1,t_2,i\}$ and contains $b_i$ as the origin $b_3'$. We assume $b_{j_s}$ corresponds to $b_{r_s}'$ under the flag $(\sigma,\tau)$. Then because $\boldsymbol{b_{j_1}}+...+\boldsymbol{b_{j_k}}=k\boldsymbol{b_i},$ we have $\boldsymbol{b_{r_1}'}+...+\boldsymbol{b_{r_k}'}=0.$ Because  \begin{align*}
    H_{(\sigma,\tau)}(X_{(\sigma,\tau)},Y_{(\sigma,\tau)},q)=1+X_{(\sigma,\tau)}+Y_{(\sigma,\tau)}+\sum\limits_{w=4}^{p+3}a_w'(q)X_{(\sigma,\tau)}^{m_w'}Y_{(\sigma,\tau)}^{n_w'},
\end{align*} and the monomial at any lattice point coincides with that in $H(X,Y,q)$ under the coordinate change, we have $$a_{r_s}'(q)=\frac{a_{j_s}(q)a_{t_1}(q)^{m_{r_s}'}a_{t_2}(q)^{n_{r_s}'}}{a_{i}(q)^{m_i'+n_i'+1}}.$$ Then because $\boldsymbol{b_{r_1}'}+...+\boldsymbol{b_{r_k}'}=0,$ we get $$\frac{a_{j_1}(q)...a_{j_k}(q)}{a_{i}(q)^k}=a_{r_1}'(q)...a_{r_k}'(q).$$ Now we have reduced the case to $i=3$. Thus $$\frac{a_{j_1}(q)...a_{j_k}(q)}{ka_{i}(q)^k}$$ is a non-constant polynomial of $q$. Near the large radius limit, for any $l$, we know that $$\sum\limits_{k<l,b_{j_1}+...+b_{j_k}=kb_{i}}\frac{a_{j_1}(q)...a_{j_k}(q)}{ka_{i}(q)^k}$$ is a polynomial of $q$ with the constant 0. Besides, near the large radius limit, there exists $z_0=(\frac{1}{3},\frac{1}{3})=(x_{0,(\sigma,\tau)},y_{0,(\sigma,\tau)})$ making $$\sum\limits_{i\ne 3}|a_i'(q)||x_{0,(\sigma,\tau)}|^{m_i'}|y_{0,(\sigma,\tau)}|^{n_i'}<\frac{3}{4}.$$

Then \begin{align*}
    &\sum\limits_{k\ge l,b_{j_1}+...+b_{j_k}=kb_{i}}|\frac{a_{j_1}(q)...a_{j_k}(q)}{ka_{i}(q)^k}|\\&<\sum\limits_{k\ge l,}|\frac{a_{j_1}(q)...a_{j_k}(q)}{ka_{i}(q)^k}||x_0^{m_{j_1}+...+m_{j_k}-km_i}||y_0^{n_{j_1}+...+n_{j_k}-kn_i}|\\&=(\sum\limits_{i\ne 3}|a_i'(q)||x_{0,(\sigma,\tau)}|^{m_i'}|y_{0,(\sigma,\tau)}|^{n_i})^l\sum\limits_{i=0}^{+\infty}(-1)^{i+l-1}\frac{(\sum\limits_{i\ne 3}|a_i'(q)||x_{0,(\sigma,\tau)}|^{m_i'}|y_{0,(\sigma,\tau)}|^{n_i})^{i}}{i+l}\\&<(\frac{3}{4})^l(\sum\limits_{i=0}^{+\infty}(\frac{3}{4})^i)=4(\frac{3}{4})^l.
\end{align*} We have proved that near the large radius limit, $|\gamma_i-log|a_i(q)||<1,\forall i$.

Note that if $i_1=3,i_2=2$, then $-1<\gamma_{i_1}<1$, $-1<\gamma_{i_2}<1$. We consider the point $w_0=(w_{1,0},w_{2,0})=(-3,\gamma_{i_1}-\gamma_{i_2})$. Near the large radius limit, any tropical monomial of the tropical polynomial $S(w)=max_{i=1,..,p+3}\{\gamma_i+m_iw_1+n_iw_2\}$ is either one of $\gamma_{i_1},\gamma_{i_2}+w_2,\gamma_{1}+w_1$ or one of $\gamma_i+m_iw_1+n_iw_2,i>3$. But we have shown that $\gamma_i<log|a_i(q)|+1$ for all $i>3$. Thus near the large radius limit, we have $\gamma_i+m_iw_{1,0}+n_iw_{2,0}<-2$. When $i=1$, $\gamma_1+w_{1,0}<-3+1=-2$. Besides, $\gamma_{i_1}=\gamma_{i_2}+w_{2,0}>-1$. Now we have proved that $w_0\in \sigma_{i_1,i_2}^*$ for $i_1=3$ and $i_2=2$. Similarly, with the symmetry Proposition \ref{p2.5.2}, we know that for any $(i_1,i_2)$ satisfying the condition that $b_{i_1}$ connects $b_{i_2}$ directly by a 1-cell in $T_{\Sigma}$, $\sigma_{i_1,i_2}^*$ is non-empty near the large radius limit.

On the other hand, if $i_1=3$ but $b_{i_1}$ doesn't connect $b_{i_2}$ directly in $T_{\Sigma}$, we choose a flag $(\tau,\sigma)$ contains $b_{i_1}$ as the origin, and $b_{j_1}$, $b_{j_2}$ as another two vertices making $b_{i_2}$ lies in the cone spanned by the flag, i.e. $\boldsymbol{b_{i_2}}-\boldsymbol{b_{i_1}}=\lambda_1 e_{1,(\tau,\sigma)}+\lambda_2e_{2,(\tau,\sigma)}$ with non-negative $\lambda_1,\lambda_2$. Because $b_{i_1}$ does not connect $b_{i_2}$ directly, we know that $\lambda_1+\lambda_2\ge 2$. Figure \ref{f8} shows the case.     

\begin{figure}
    \centering
    \includegraphics[scale=0.5]{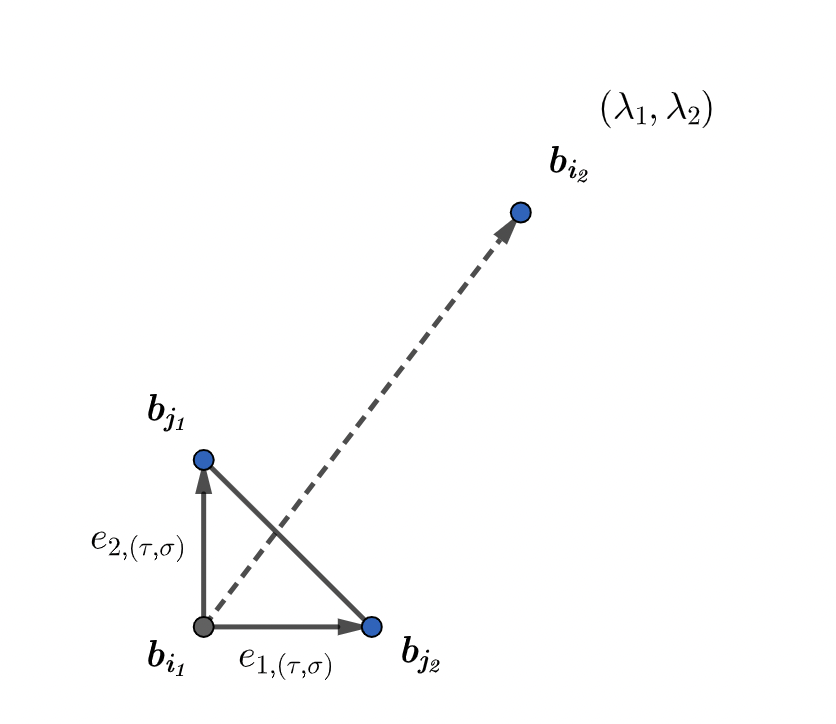}
    \caption{}
    \label{f8}
\end{figure}
By Proposition \ref{p2.5.4}, we know that $\frac{a_{i_2}(q)a_{i_1}(q)^{\lambda_1+\lambda_2-1}}{a_{j_1}(q)^{\lambda_1}a_{j_2}(q)^{\lambda_2}}$ is a non-constant monomial of $q$. Then near the large radius limit, we have $$|\frac{a_{i_2}(q)a_{i_1}(q)^{\lambda_1+\lambda_2-1}}{a_{j_1}(q)^{\lambda_1}a_{j_2}(q)^{\lambda_2}}|<\frac{1}{e^{2\lambda_1+2\lambda_2}}.$$ Taking the logarithm, we get $$log|a_{i_2}(q)|+(\lambda_1+\lambda_2-1)log|a_{i_1}(q)|-\lambda_1log|a_{j_1}(q)|-\lambda_2log|a_{j_2}(q)|<-2\lambda_1-2\lambda_2.$$ However, for any $(w_1,w_2)\in \mathbb{R}^2$, \begin{align*}
    &\gamma_{i_1}+m_{i_1}w_1+n_{i_1}w_2+(\lambda_1+\lambda_2-1)(\gamma_{i_2}+m_{i_2}w_1+n_{i_2}w_2)\\&-\lambda_1(\gamma_{j_1}+m_{j_1}w_1+n_{j_1}w_2)-\lambda_2(\gamma_{j_2}+m_{j_2}w_1+n_{j_2}w_2)\\&=\gamma_{i_1}+(\lambda_1+\lambda_2-1)\gamma_{i_2}-\lambda_1\gamma_{j_1}-\lambda_2\gamma_{j_2}\\&=log|a_{i_2}(q)|+(\lambda_1+\lambda_2-1)log|a_{i_1}(q)|-\lambda_1log|a_{j_1}(q)|-\lambda_2log|a_{j_2}(q)|\\&+(log|a_{i_1}(q)|-\gamma_{i_1})+(\lambda_1+\lambda_2-1)(log|a_{i_2}(q)|-\gamma_{i_2})-\lambda_1(log|a_{j_1}(q)|-\gamma_{j_1})\\&-\lambda_2(log|a_{j_2}(q)|-\gamma_{j_2})\\&<-2(\lambda_1+\lambda_2)+2(\lambda_1+\lambda_2)=0
\end{align*} 
Because $\lambda_1+\lambda_2-1\ge 1$, $\lambda_1,\lambda_2\ge 0$, the previous inequality contradicts the fact that  $\gamma_{i_1}+m_{i_1}w_1+n_{i_1}w_2=\gamma_{i_2}+m_{i_2}w_1+n_{i_2}w_2=max\{\gamma_i+m_iw_1+n_iw_2\}_{i=1,...,p+3}$. Therefore, we know that $\sigma_{i_1,i_2}^*$ is empty near the large radius limit. Now we have constructed all dual 1-cells in $T$, then these 1-cells uniquely determine the dual 0-cells and 2-cells. Dual 0-cells are the faces of these 1-cells. Dual 2-cells are those areas bounded by the 1-cells. Figure \ref{f9} and Figure \ref{f10} show an example. Here Figure \ref{f9} gives a toric Calabi-Yau 3-fold with the mirror curve $1+x+y+q_1y^2+q_1q_2xy=0$. Figure \ref{f10} shows the amoeba (red lines) and the tropical spine (black lines) for such a mirror curve.
\begin{figure}[htbp]
\centering
\begin{minipage}[t]{0.48\textwidth}
\centering
\includegraphics[width=6cm]{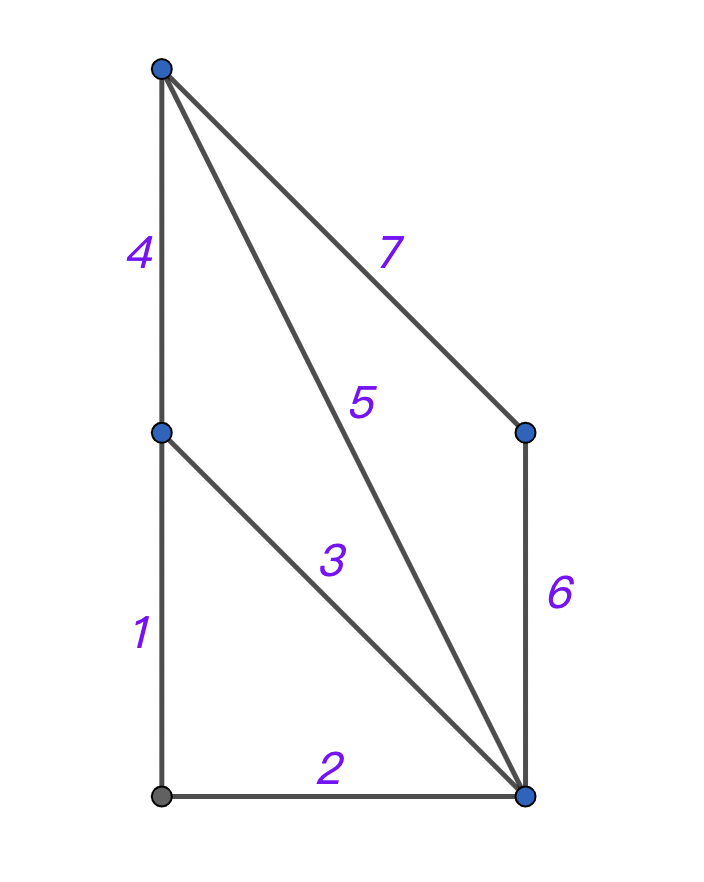}
\caption{}
\label{f9}
\end{minipage}
\begin{minipage}[t]{0.48\textwidth}
\centering
\includegraphics[width=6cm]{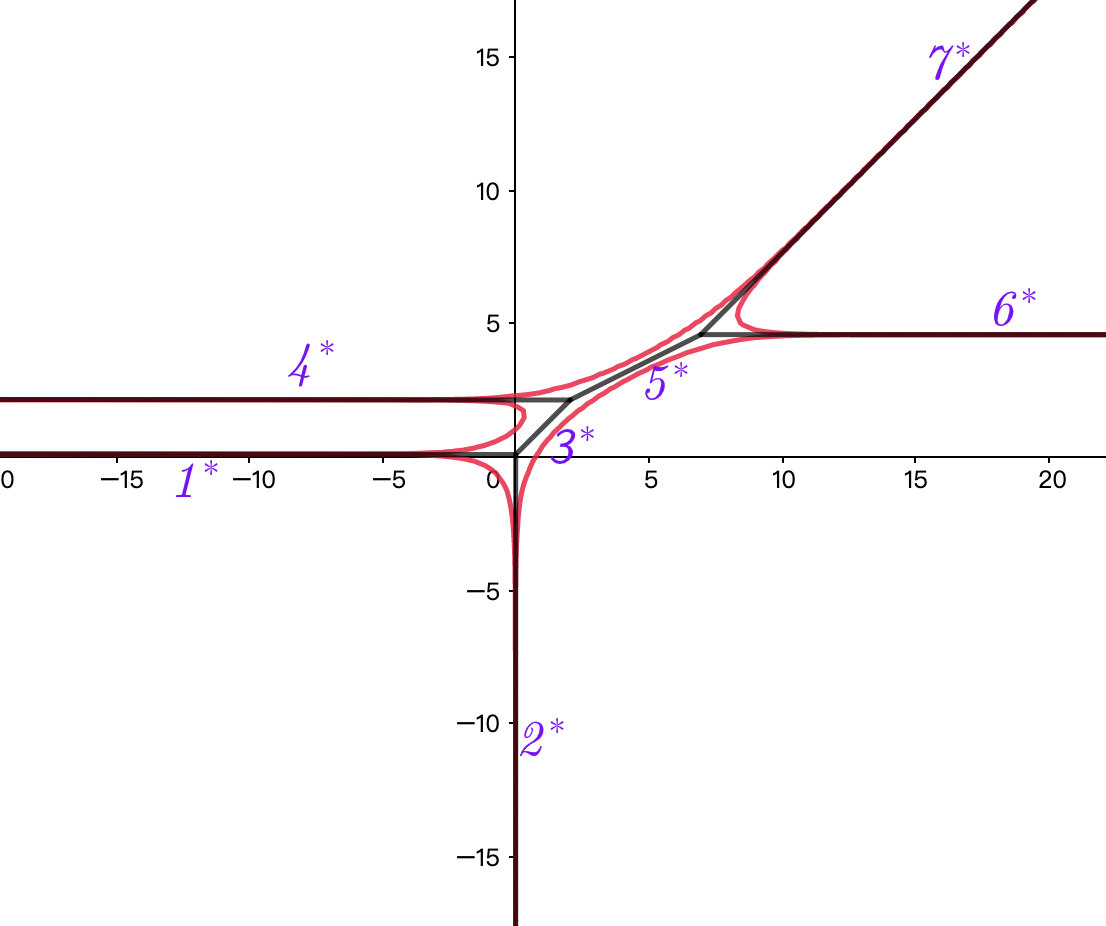}
\caption{}
\label{f10}
\end{minipage}
\end{figure}
\end{proof}

\begin{remark1}One could also give a dual subdivision of $T_{\Sigma}$ by the methods in symplectic geometry\cite{fang2020remodeling}. For a symplectic toric Calabi-Yau 3-fold $X$, we consider the maximal torus action of $\mathbb{T}^3\subset (\mathbb{C}^*)^3$ and see the action as a Hamiltonian action. Here one could also see the symplectic metric as the induced symplectic metric given by the K$\ddot{a}$hler quotient of the canonical symplectic metric on $\mathbb{C}^{p+3}$. We denote the moment map by $\mu_{\mathbb{T}_{\mathbb{R}}}$, where $\mu_{\mathbb{T}_{\mathbb{R}}}: X\to M_{\mathbb{R}}$ after we identify the Lie algebra of $\mathbb{T}^3$ with $M_{\mathbb{R}}$. Then we take the projection $\pi$ from $M_{\mathbb{R}}$ to $M'_{\mathbb{R}}$ and consider the composition $\mu_{\mathbb{T}_{\mathbb{R}}}'=\pi*\mu_{\mathbb{T}_{\mathbb{R}}}$. Finally, we define the union of all 1-dimensional and 0-dimensional $\mathbb{T}^3$-orbits of $X$ by $X_1$. Consider the image $\mu_{\mathbb{T}_{\mathbb{R}}}'(X_1)$ of $X_1$ under $\mu_{\mathbb{T}_{\mathbb{R}}}'$. The image is called the toric graph. Actually, the two subdivisions given by the tropical spine and the toric graph of $\mathbb{R}^2$ are both dual subdivisions of the triangulation $T_{\Sigma}$.
\end{remark1}
\section{Cyclic-M curve and its amoeba}
\label{4}
In Section \ref{2}, we have introduced the amoeba map $\mu$. Then in Section \ref{3}, we have calculated the tropical spine of the amoeba of the mirror curve for a smooth toric Calabi-Yau 3-fold $X_{\Sigma}$ directly. Such the spine gives a dual subdivision of $T_{\Sigma}$. If we want to figure out deeper propositions of the mirror curve itself, not only the amoeba of the mirror curve, we also wish the amoeba map $\mu$ to have some good propositions. For example, we hope $\mu$ is locally a two-fold covering map in the interior of the amoeba and an embedding on the boundary. However, such good propositions do not always hold, even for the mirror curve with real coefficients near the large radius limit. Therefore, we must suggest some new requirements for the mirror curve. 

In this section, we mainly recall the notation of the cyclic-M curve and the propositions about the cyclic-M curve introduced by Mikhalkin\cite{mikhalkin2000real}. These propositions also explain why we hope the mirror curve is a cyclic-M curve. 

For a non-singular algebraic curve $A=\{(x,y)\in (\mathbb{C}^*)^2|p(x,y)=0\}$ with the Newton polytope $\Delta_p$, we could do the compactification on the toric surface $X_{\Delta_p}$ and get $\bar{A}$. For a non-singular real algebraic curve $\mathbb{R}A=\{(x,y)\in (\mathbb{R}^*)^2|p(x,y)=0\}$, we could also do the compactification on the real toric surface $\mathbb{R}X_{\Delta_p}$. \cite{fang2020remodeling} shows that if $g$ is the number of interior lattice points of $\Delta_p$, there exist $g$ linear-independent differential 1-forms on $\bar{A}$, and $g$ is the maximal number. Then the genus of $\bar{A}$ on $X_{\Delta_p}$ equals $g$. By Harnack's inequality\cite{harnack1876ueber}, $\mathbb{R}\bar{A}=\bar{A}\cap \mathbb{R}X_{\Delta_p}$ has at most $g+1$ connected components on the real toric surface $\mathbb{R}X_{\Delta_p}$ which is the closure of $(\mathbb{R}^*)^2$ in $X_{\Delta_p}$. Now we could give an exact definition of the M-curve. 
\begin{definition1}
\label{def:M-curve}
Let $p$ be a polynomial with real coefficients and $g$ be the number of interior lattice points of $\Delta_p$. A non-singular real algebraic curve $\mathbb{R}A=\{(z_1,z_2)\in (\mathbb{R^*})^2|p(z_1,z_2)=0\}$ is an M-curve iff $\mathbb{R}\bar{A}$ has g+1 connected components on the real toric surface $\mathbb{R}X_{\Delta_p}$.
\end{definition1}
 
Then we explain what is a cyclic-M curve. Here the `cyclic' means the order of an M-curve intersecting the axes of the corresponding real toric surface in a cyclic order. If $\mathbb{R}A$ is an M-curve and the real toric surface $\mathbb{R}X_{\Delta_p}$ has axes $l_1,...,l_n$, any connected component of $\mathbb{R}\bar{A}$ is homeomorphic to a circle. Thus, if we want to talk about the cyclic order, we must first assume there is only one component $W$ of $\mathbb{R}\bar{A}$ intersecting the axes of $\mathbb{R}X_{\Delta_p}$. Besides, we also require that there exist arcs $c_1,...,c_n$ of $W$, which they do not intersect with each other, and $c_j$ only intersects $l_j$ with $d_j$ points, where $d_j$ is the number of lattice points on the side of $\Delta_p$ corresponding to $l_j$ minus 1. We define $d_j$ as the integer length of $\Delta_p$. Then $d_j$ is also the degree of $p$ restricted on $l_j$. Finally, we must require the order of these arcs to coincide with the order of sides of $\Delta_p$. We write the definition formally right now.
\begin{definition1}
\label{def:cyclic-M curve}
Let $\mathbb{R}A$ be an M-curve defined by a polynomial $p(x,y)=0$ with real coefficients and $\Delta_p$ be the Newton polytope of $p$. We call $\mathbb{R}A$ an cyclic-M curve iff 
\begin{enumerate}
    \item There is only one connected component $W_p$ of $\mathbb{R}\bar{A}$ intersecting axes of $X_{\Delta_p}$.
    \item There exist non-intersecting arcs $c_i$ of $W_p$ intersecting axis $l_i$ at $d_i$ points, and not intersecting the other axes, where $l_i$ is the axis defined by the side $\Delta_i$ of $\Delta_p$, and $d_i$ is the integer length of $\Delta_i$.
    \item The order of $c_i$ on $W_p$ coincides with the order of $\Delta_i$ on $\Delta_p$. 
\end{enumerate}
\end{definition1} 

\begin{remark1}
An M-curve is not necessarily a cyclic-M curve. For example, Figure \ref{f11} gives a non-singular M-curve given by $p(z_1,z_2)=1+z_1+z_2+qz_1z_2$, $0<q<1$. However, this curve is not a cyclic-M curve because the order of $\Delta_i$ does not coincide with the order of arc $c_i$ on $W_p$. 
\end{remark1}

\begin{figure}
    \centering
    \includegraphics[scale=0.3]{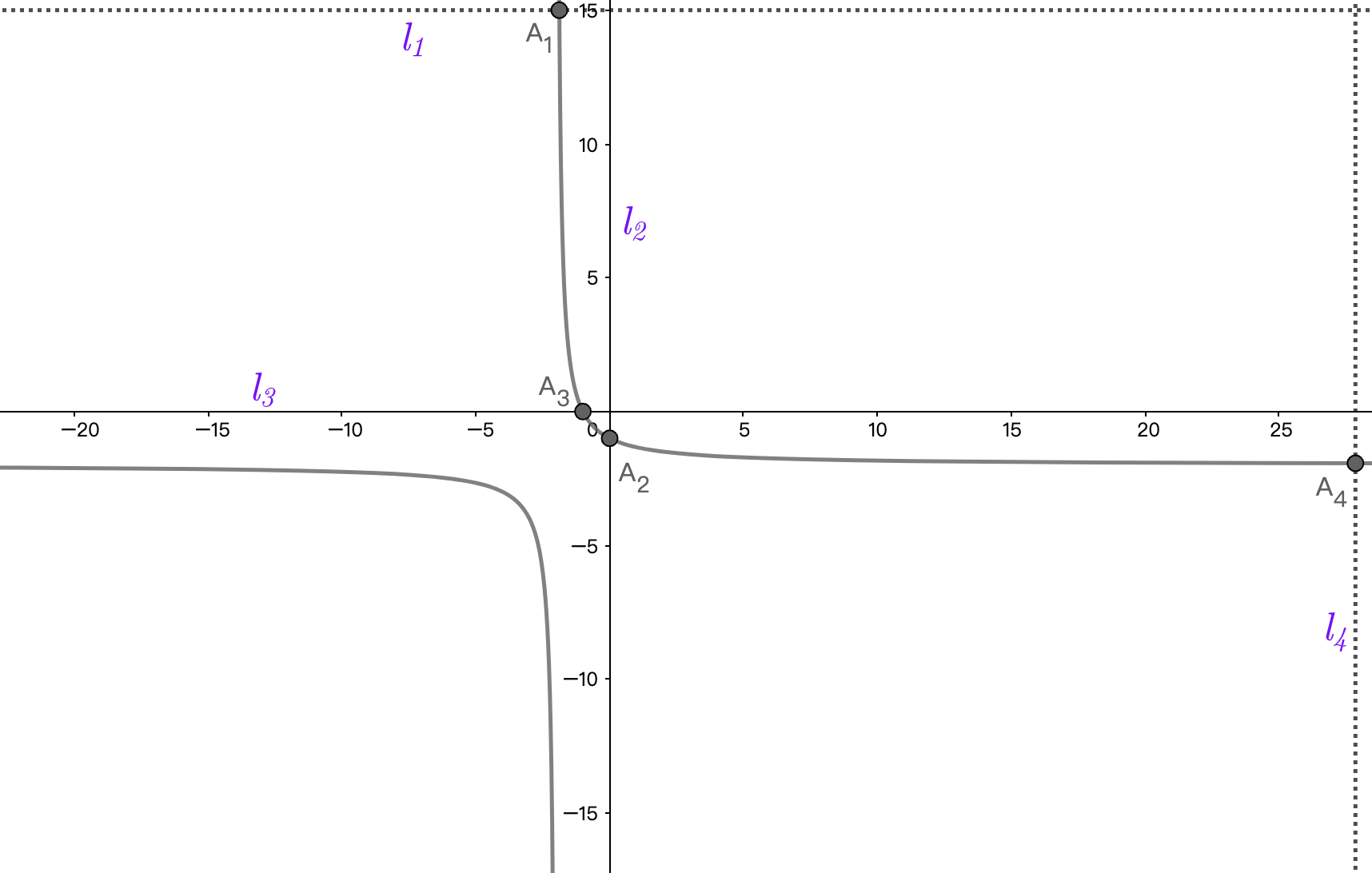}
    \caption{$1+z_1+z_2+0.5z_1z_2=0$}
    \label{f11}
\end{figure}
The cyclic condition is important here because, without this condition, there may exist some inflection points on the boundary of the amoeba of $p$, which may destroy the good propositions of the amoeba map. 

In 2000, Mikhalkin proved the following propositions and a theorem about cyclic-M curves\cite{mikhalkin2000real}. For short, we only list the statements here. In the paper\cite{mikhalkin2000real}, one can read the complete proof.

\begin{proposition1}
\label{prop:F=RA}
Let $p$ be a Laurent polynomial with real coefficients and $A=\{(x,y)\in(\mathbb{C^*})^2|p(x,y)=0\}$. If $\mathbb{R}A=A\cap(\mathbb{R^*})^2$ is a cyclic-M curve, we define $F$ to be the locus of critical points of $\mu|_A$, where $\mu$ is the amoeba map. Then we have $F=\mathbb{R}A$.  
\end{proposition1}

\begin{proposition1}
\label{prop: boundary embedding}
Let $p$ be a Laurent polynomial with real coefficients and $A=\{(x,y)\in(\mathbb{C^*})^2|p(x,y)=0\}$. If $\mathbb{R}A$ is a cyclic-M curve, then $\mu(\mathbb{R}A)=\partial\mu(A)$. Besides, the amoeba map $\mu$ is an embedding on $\mathbb{R}A$.
\end{proposition1}

\begin{theorem1}
\label{tp type}
Let $p$ be a Laurent polynomial with real coefficients and $A=\{(x,y)\in(\mathbb{C^*})^2|p(x,y)=0\}$. If $\mathbb{R}A=A\cap(\mathbb{R^*})^2$ is a cyclic-M curve, the topological type of $(\mathbb{R}X_{\Delta_p};\mathbb{R}A, l_1\cup...\cup l_n)$ is uniquely determined by $\Delta_p$.
\end{theorem1}
With such propositions, we know that if the mirror curve $H(x,y,q)=0$ is a cyclic M-curve, there don't exist inflection points on the boundary of the amoeba of the mirror curve. Thus every point inside the amoeba has exactly two preimage points, and on the boundary, the amoeba map is an embedding. Then we could figure out the local topology of the mirror curve such that the mirror curve is glued from some pairs of pants and tubes. However, for general choices of $q$, the mirror curve $H(x,y,q)=0$ isn't always a cyclic-M curve, in the next section, we will see that under a special family of $q$, the mirror curve satisfies the cyclic-M condition.
\section{Cyclic M-condition of the mirror curve and its topology}
\label{5}
In Section \ref{3}, we have calculated the tropical spine of the amoeba of mirror curve for a smooth toric Calabi-Yau 3-fold $X_{\Sigma}$ directly, and such a spine gives a dual subdivision of $T_{\Sigma}$. Then in Section \ref{4}, we recall the propositions introduced by Mikhalkin. The most important one in this paper is that if $F=0$ is a cyclic-M curve on the real toric surface, then the boundary of the amoeba is exactly the image of zeroes in $(\mathbb{R}^*)^2$ under the amoeba map Besides, the amoeba map is 2-1 in the interior of amoeba and an embedding on the boundary.

However, the mirror curve with general real coefficients does not always satisfy either M-condition or cyclic condition. For example, the mirror curve of $\Sigma_1$, $$H_1(x,y,q)=1+x+y+qx^{-1}y^{-1}=0,$$ when $q>0$, is not an M-curve as we see in Figure \ref{f12} and Figure \ref{f13}. The mirror curve of $\Sigma_2$, $$H_2(x,y,q)=1+x+y+qxy=0,$$ when $q>0$ doesn't satisfy the cyclic condition as we see in Figure \ref{f14} and Figure \ref{f15} as well.

\begin{figure}[htbp]
\centering
\begin{minipage}[t]{0.48\textwidth}
\centering
\includegraphics[width=6cm]{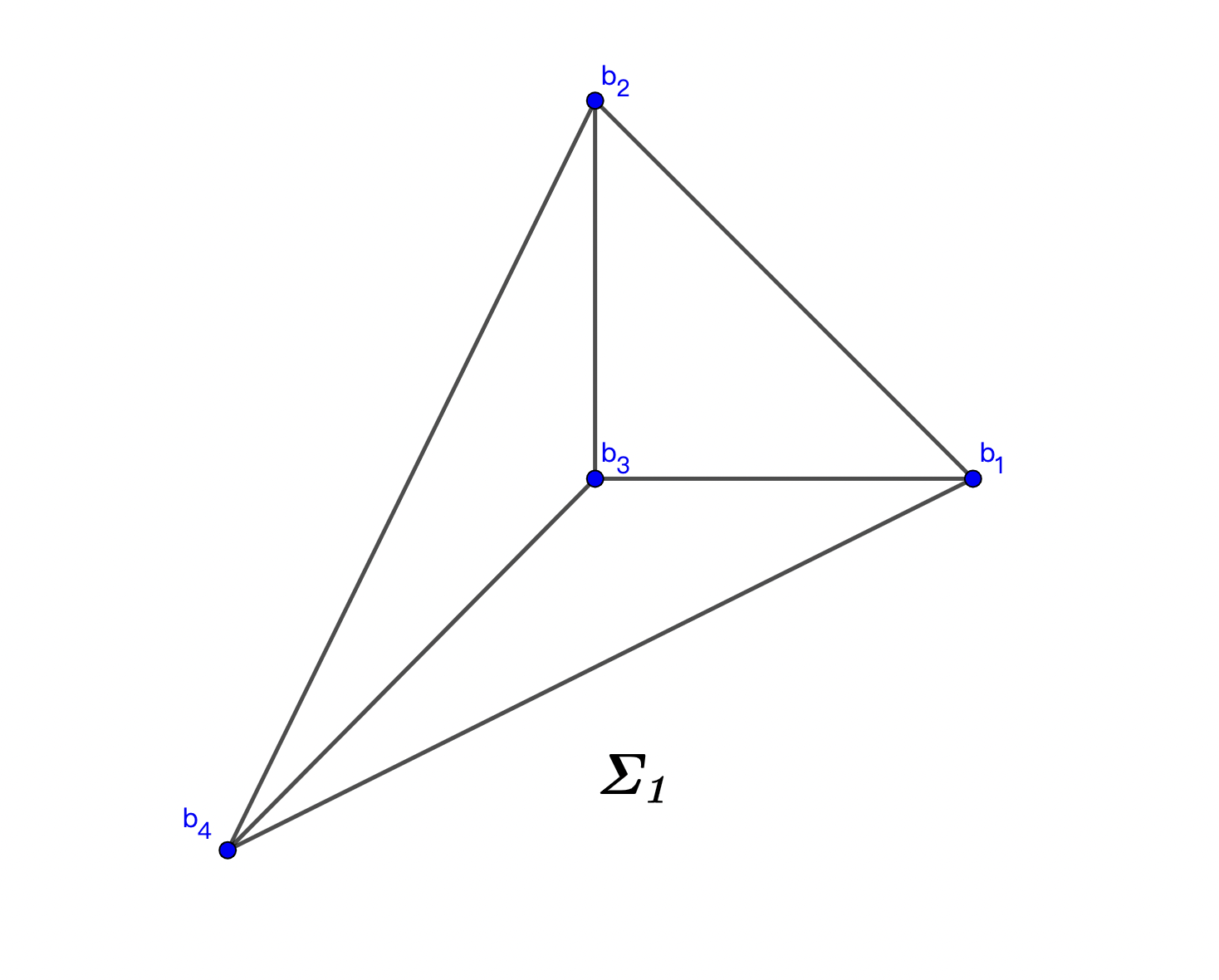}
\caption{$\Sigma_1$}
\label{f12}
\end{minipage}
\begin{minipage}[t]{0.48\textwidth}
\centering
\includegraphics[width=6cm]{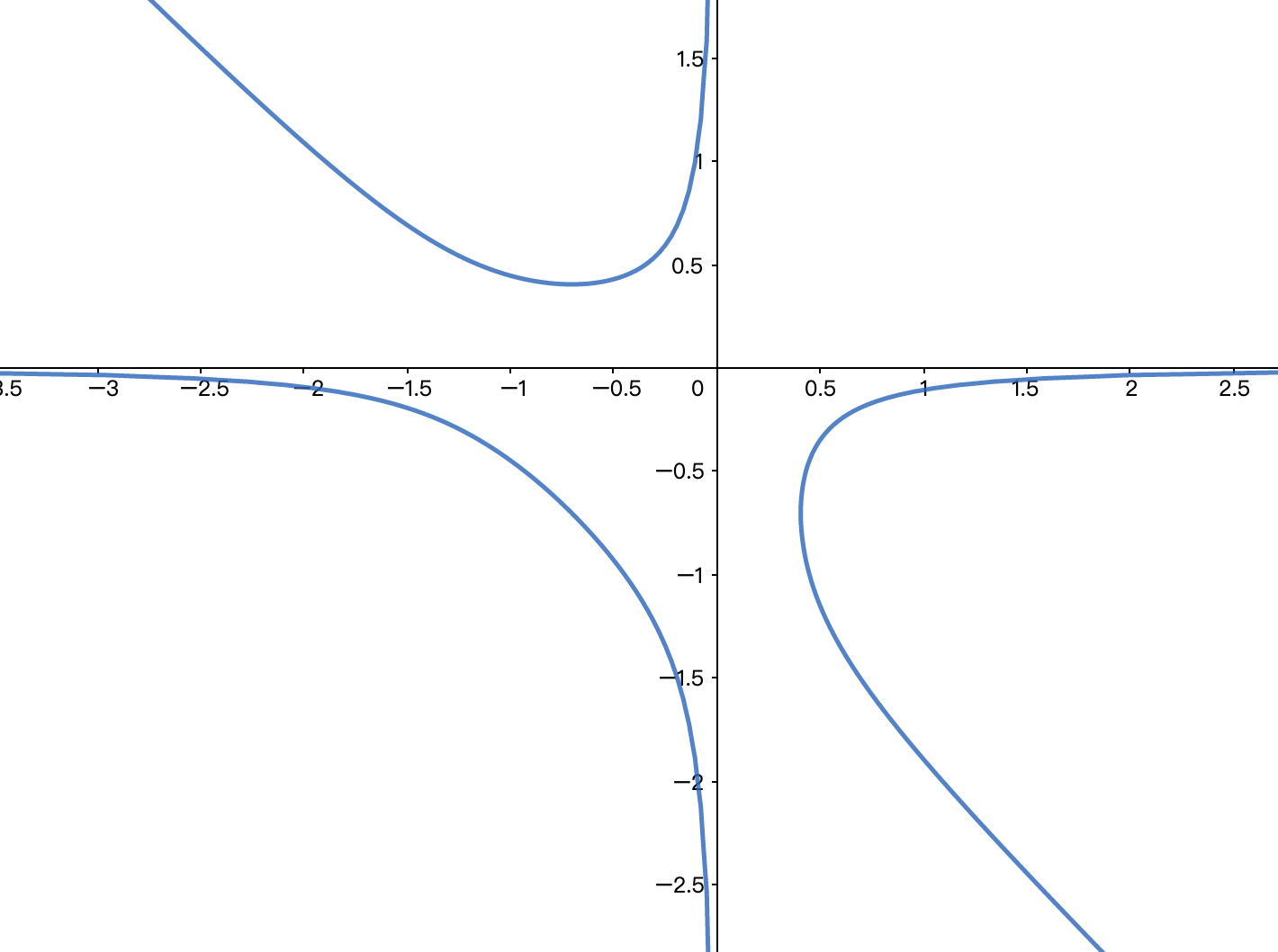}
\caption{$1+x+y+qx^{-1}y^{-1}=0$}
\label{f13}
\end{minipage}
\end{figure}

\begin{figure}[htbp]
\centering
\begin{minipage}[t]{0.48\textwidth}
\centering
\includegraphics[width=6cm]{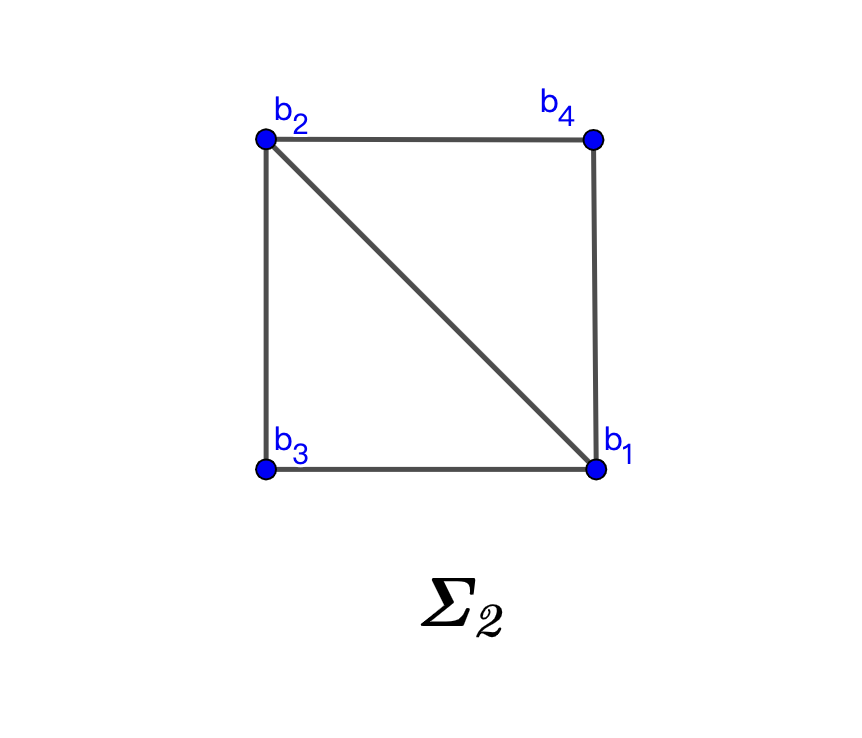}
\caption{$\Sigma_2$}
\label{f14}
\end{minipage}
\begin{minipage}[t]{0.48\textwidth}
\centering
\includegraphics[width=6cm]{Figure9.png}
\caption{$1+x+y+qxy=0$}
\label{f15}
\end{minipage}
\end{figure}
The main goals of this section are to find a special family of coefficients and to prove that these coefficients make the mirror curve a cyclic-M curve. We finish the construction by choosing the positive or negative sign at every lattice point in the fan.
\subsection{Expected choices of coefficients in the mirror curve}
\label{5.1}
In this chapter, we mainly show how we choose the positive or negative sign for each $a_i(q)$ when we expect the mirror curve satisfies the cyclic-M condition. Besides, we also explain the motivations for such a choice. 

Firstly, we consider the case $T_{\Sigma}$ only contains four 0-cells: $b_3(0,0)$, $b_2(0,1)$, $b_1(1,0)$, $b_4(m_4,n_4)$. Because $T_{\Sigma}$ only contains triangles with area $\frac{1}{2}$ as 2-cells, $b_4$ could only possibly lie in the union of three lines in $N'$: $x=-1$, $y=-1$, $x+y=2$. As shown in the Figure \ref{f16}, there are only 12 cases.  
\begin{figure}
    \centering
    \includegraphics[scale=0.35]{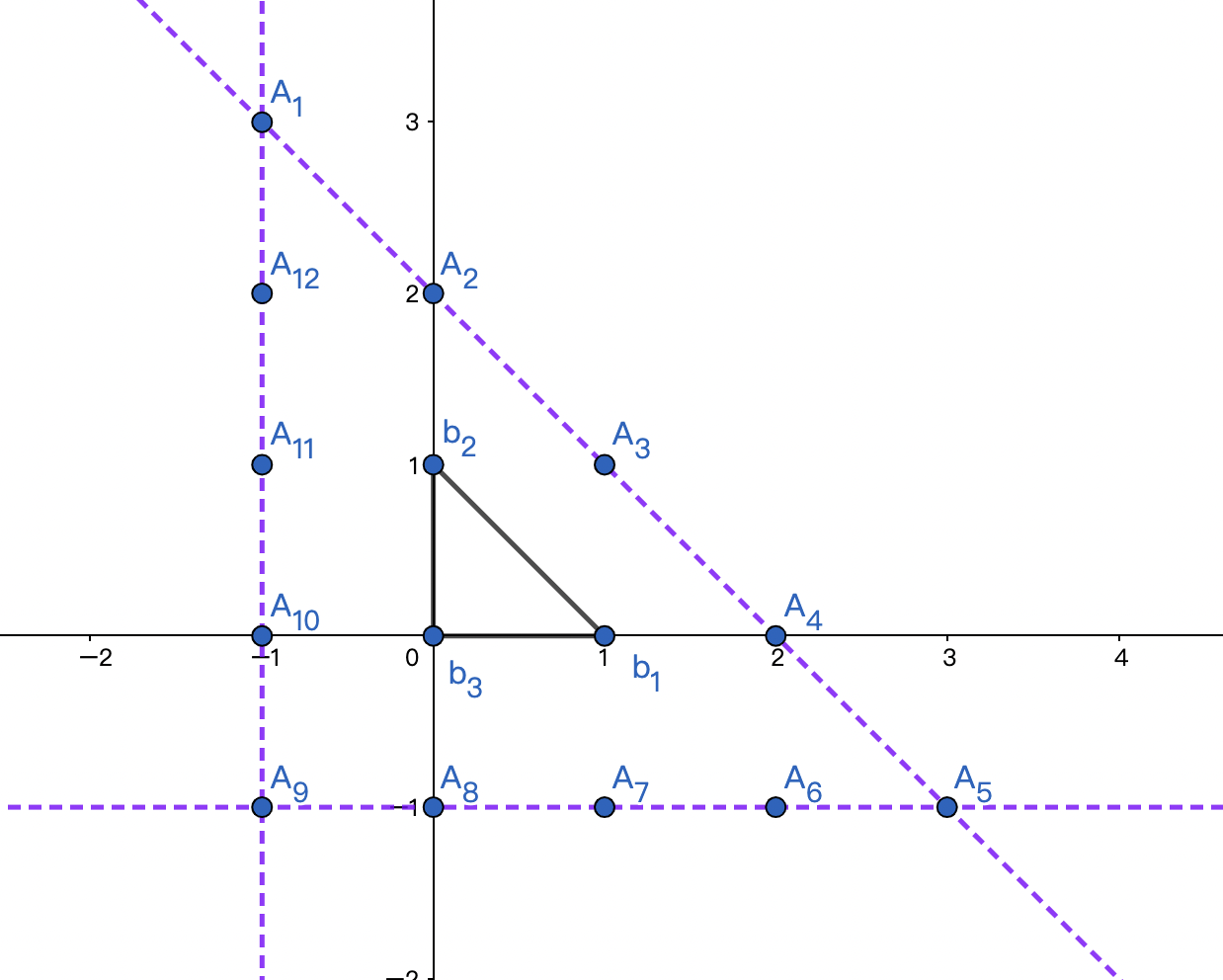}
    \caption{}
    \label{f16}
\end{figure}
Then $S_{124}=S_{\Delta_{b_1b_2b_4}}=\frac{1}{2}$ or $S_{134}=\frac{1}{2}$ or $S_{234}=\frac{1}{2}$. By drawing the mirror curve (we draw two families of cyclic-M curves here in Figure \ref{f17} and Figure \ref{f18}), our expectation must be that $a_i(q)>0$ when $(m_4,n_4)$ satisfies the condition $m_4$ or $n_4$ is even, and $a_i(q)<0$ when $(m_4,n_4)$ satisfies the condition $m_4$ and $n_4$ are both odd.
\begin{figure}[htbp]
\centering
\begin{minipage}[t]{0.48\textwidth}
\centering
\includegraphics[width=6cm]{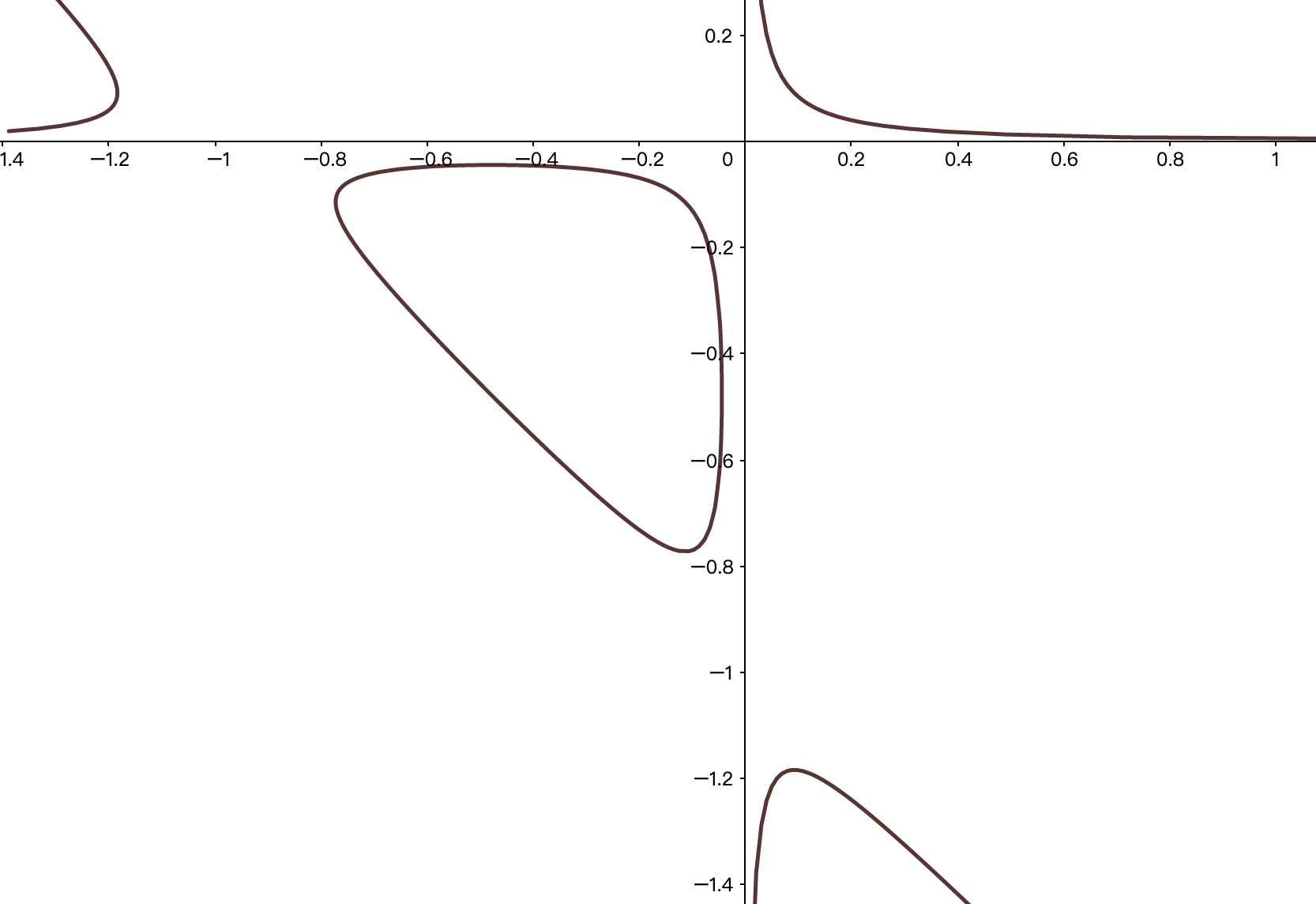}
\caption{$1+x+y-0.01x^{-1}y^{-1}=0$}
\label{f17}
\end{minipage}
\begin{minipage}[t]{0.48\textwidth}
\centering
\includegraphics[width=6cm]{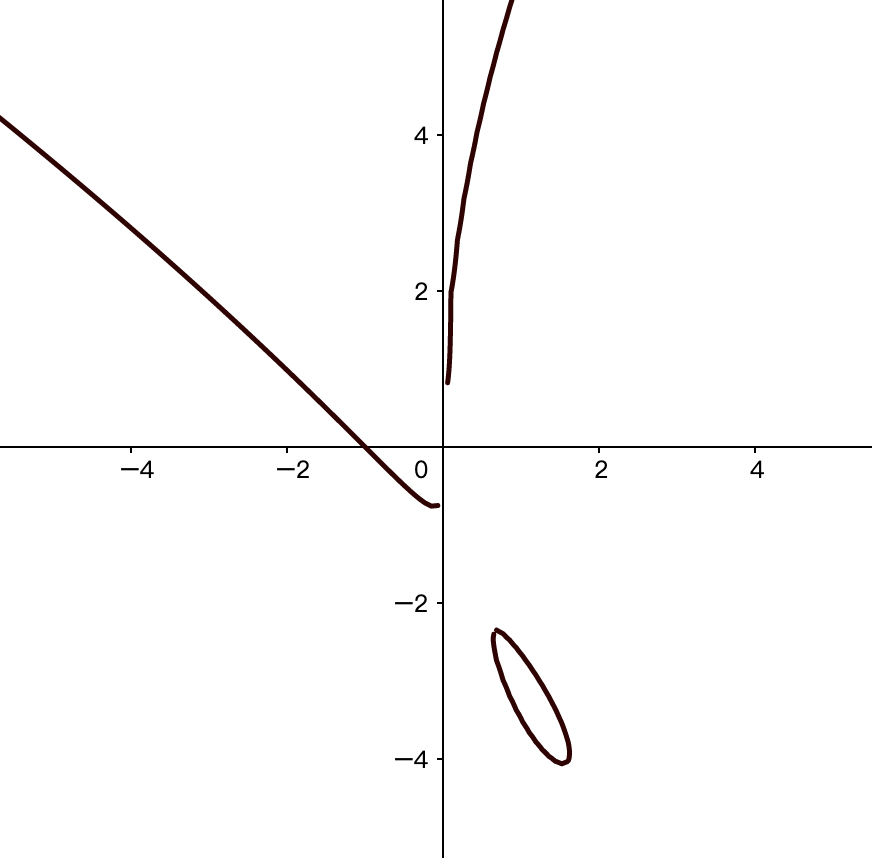}
\caption{$1+x+y-0.02x^{-1}y^3=0$}
\label{f18}
\end{minipage}
\end{figure}

Now we consider a general finest triangulation $T_{\Sigma}$. For any lattice point $b_{i}=(m_i,n_i)$, there exists a chain of triangles $\Delta_1\to\Delta_2\to...\to\Delta_{k_i}$, where $\Delta_1$=$\Delta_{b_1b_2b_3}$, $\Delta_j$ have exact two vertices the same as $\Delta_{j+1}$, and $\Delta_{k_i}$ is the first triangle which contains $b_{i}$ as a vertex in the chain. Assume the unique vertex which belongs to $\Delta_{j+1}$ but does not belong to $\Delta_{j}$ is $b_{r_{j+1}}$. Then we have defined a chain of flag $...\to(\tau_j,\sigma_j)\to...$ in $\Sigma$, where $I_{\tau_j}'=\{s_1,s_2\}$, $I_{\sigma_j}'=\{s_1,s_2,r_j\}$, and $b_{s_1},b_{s_2}$ are other two vertices in $\Delta_i$. We could give the expected sign of $a_{r_{s+1}}(q)$ by taking the affine coordinate change which sends $\Delta_s$ to $\Delta_{(0,0),(1,0),(0,1)}$. Under the affine coordinates change, $b_{r_{s+1}}$ is sent to a lattice point in the three given lines of the previous Figure \ref{f16}. Thus we could give the expected sign of $a_{r_{s+1}}'(q)$. Because we already know the expected sign of $a_e(q)$, where $b_e$ is any vertex of $\Delta_{s}$, we could calculate the expected sign of $a_{r_{s+1}}(q)$ by Proposition \ref{p2.5.4}.  

The signs we give by different chains agree on the same point. Besides, the sign of $a_i(q)$ even only depends on the coordinate of $b_i$ in $N'$. Actually our expectation is that $a_i(q)<0$ when $m_i,n_i$ are both odd, and $a_i(q)>0$ when $m_i$ or $n_i$ is even for all $i=1,2,...,p+3$. We show the fact by direct computation. Given any flag $(\tau,\sigma)$, we consider the coordinate change.
\begin{align*}
X_{(\tau,\sigma)}=X^{a(\tau,\sigma)}Y^{b(\tau,\sigma)}\frac{a_{i_1}(q)}{a_{i_3}(q)} 
\\
Y_{(\tau,\sigma)}=X^{c(\tau,\sigma)}Y^{d(\tau,\sigma)}\frac{a_{i_2}(q)}{a_{i_3}(q)}
\end{align*}

If $\Delta_{s}$ contains $b_{i_1}$, $b_{i_2}$, $b_{i_3}$ as vertices, $\Delta_{s+1}$ contains $b_{i_1}$, $b_{i_2}$, $b_{i_4}$ as vertices, and $b_{i_4}$ has the coordinate $(\lambda_1,\lambda_2)$ under the flag $(\tau_s,\sigma_s)$, because $\Delta_{s+1}$ contains $b_{i_1},b_{i_2}$, we get $\lambda_1+\lambda_2=2$.

Next, we calculate the expected sign of $a_{i_4}(q)$ by $a_{i_1}(q)$, $a_{i_2}(q)$, $a_{i_3}(q)$. In the mirror curve $H_{(\tau,\sigma)}(X_{(\tau,\sigma)},Y_{(\tau,\sigma)},q)=0$, the lattice point $b_{i_4}$ corresponds to the monomial $a'_{4}(q)X_{(\tau,\sigma)}^{\lambda_1}Y_{(\tau,\sigma)}^{\lambda_2}$. By our construction, $a'_{4}(q)<0$ if $\lambda_1$ is odd, and $a'_{4}(q)>0$ if $\lambda_1$ is even. But we know that $$a_{i_4}(q)=a_{i_3}(q)(\frac{a_{i_1}(q)}{a_{i_3}(q)})^{\lambda_1}(\frac{a_{i_2}(q)}{a_{i_3}(q)})^{\lambda_2}a'_4(q).$$ When $\lambda_1$ is even, the sign of $a_{i_4}(q)$ is the same as $a_{i_3}(q)$, and two coordinate components of $b_{i_4}-b_{i_3}$ are both even. When $\lambda_1$ is odd, we have $a_{i_4}(q)a_{i_3}(q)$ have the opposite sign of  $a_{i_1}(q)a_{i_2}(q)$. Then we consider $(a,b)=e_{1,(\tau_s,\sigma_s)}=\boldsymbol{b_{i_1}}-\boldsymbol{b_{i_3}}$, and $(c,d)=e_{2,(\tau_s,\sigma_s)}=\boldsymbol{b_{i_2}}-\boldsymbol{b_{i_3}}$. Because $ad-bc=1$, without loss of generality, we assume $ad$ is odd and $b$ is even. No matter $c$ is even or odd, we could prove that $b_{i_1},b_{i_2},b_{i_3},b_{i_4}$ have different parities by enumeration. Here $(s_1,s_2)$ and $(s_3,s_4)$ have different parities means that $s_1-s_3$ and $s_2-s_4$ have at least one odd integer. Thus by induction, we know $a_{i_4}(q)<0$ when $m_{i_4},n_{i_4}$ are both odd, and $a_{i_4}(q)>0$ when $m_{i_4}$ or $n_{i_4}$ is even.

\begin{aremark} These choices of signs are preserved under any coordinate change. Specifically, if we fix any flag $(\tau,\sigma)$ and consider the mirror curve of such a flag given by $H_{(\tau,\sigma)}(X_{(\tau,\sigma)},Y_{(\tau,\sigma)},q)=0$, where $$H_{(\tau,\sigma)}(X_{(\tau,\sigma)},Y_{(\tau,\sigma)},q)=\sum\limits_{i=1}^{p+3}a'_i(q)X_{(\tau,\sigma)}^{m'_i}Y_{(\tau,\sigma)}^{n'_i}.$$ Then the sign of $a'_i(q)$ given by $a_i(q)$ under the coordinate change agrees with the original sign we give, that $a'_i(q)>0$ when $m'_i$ or $n'_i$ is even, and $a'_i(q)<0$ when $m'_i$ and $n'_i$ are both odd. This could be another explanation for such choices of coefficients.
\end{aremark}
\begin{aremark}In this chapter, we only explain why we expect that when the sign of $a_i(q)$ is given above, near the large radius limit, the mirror curve is a cyclic-M curve by some basic examples and bold calculations. However, we have not yet given any strict proof. 
\end{aremark}
\subsection{M-curve condition}
\label{5.2}
In this chapter, we prove that under the right coefficient signs we give near the large radius limit, the mirror curve is an M-curve. As we defined before, the mirror curve given by $H=0$ is an M-curve iff $H=0$ have $g+1$ connected components on the corresponding real toric surface. Here $g$ is the number of interior lattice points in the Newton polytope of $H$.  

We find these $g+1$ components now. Firstly, there must exist at least one connected component (and exactly one because all the intersection points at infinity would be patched together) intersecting with the axes of the real toric surface, that is because when you do the coordinate change which sends a certain axis to the y-axis, $x=0$, the intersections of the mirror curve and the axis $x=0$ is the zeroes of a Laurent polynomial \begin{align*}
    f(y)=1+y+\sum\limits_{i}a_i(q)y^{m_i}=0.
    \end{align*} Near the large radius limit, we have $f(-\frac{2}{3})>0$ and $f(-\frac{4}{3})<0$. By the intermediate value theorem, $f$ has at least one zero on the axes. Thus the mirror curve intersects with the axes of the real toric surface.

Then we directly find the other $g$ connected components in $(\mathbb{R}^*)^2$, which each has a `domain term' corresponding to an interior lattice point of $P_{\Sigma}$. Later we will explain what is a `domain term'. Because $\Sigma$ only contains finite flags and lattice points, the norm of the coordinate of any lattice point under each flag has a maximal value. For the convention, we use $D$ to stand for an upper bound of such the maximal value. That is to say for any flag $(\tau,\sigma)$ and a lattice point $b_j$ in $P_{\Sigma}$, if the coordinate of $b_j$ under the flag $(\tau,\sigma)$ is $(m_i',n_i')$, then $D\ge \sqrt{m_i'^2+n_i'^2}$.

\begin{theorem}
\label{t5.2.1}
For the mirror curve $H(X,Y,q)=0$ under the choice of signs given by $a_i(q)<0$ when $m_i$ and $n_i$ are both odd, and $a_i(q)>0$ when $m_i$ or $n_i$ is even, near the large radius limit, fixing any interior lattice point $b_i(m_i,n_i)$ of $\Sigma$, there exists a unique connected component $W_{b_i}$ of the mirror curve on the real toric surface $X_{P_{\Sigma}}$ making all points $z_0=(x_0,y_0)\in W_{b_i}$ satisfying that \begin{align*}(1+c)|a_i(q)x_0^{m_i}y_0^{n_i}|>|a_j(q)x_0^{m_j}y_0^{n_j}|\quad\forall j\ne i,\end{align*} where $c$ is a positive constant (irrelevant with $q$) making \begin{align*}(1+c)^{2D}<\frac{4}{3}.\end{align*}
\end{theorem}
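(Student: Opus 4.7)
The plan is to use the affine equivalence of Proposition~\ref{p2.5.2} to translate the interior lattice point $b_i$ to the flag origin $b_3=(0,0)$, which is permissible because both norm ratios and the sign rule of Section~\ref{5.1} are preserved under coordinate change. After this reduction, the mirror curve takes the form
\[
H'(X,Y,q)=1+X+Y+\sum_{j\geq 4}a'_j(q)X^{m'_j}Y^{n'_j},
\]
with $(0,0)$ interior to $\Delta_{H'}$, and it suffices to produce the loop $W_{b_i}$ for this normalized polynomial. Next, I would introduce the dominance region
\[
V=\{(X,Y)\in(\mathbb{R}^*)^2 : |a'_j(q)X^{m'_j}Y^{n'_j}|<1+c\ \text{for all}\ j\neq 3\},
\]
whose amoeba image $\mu(V)$ is the intersection of open half-planes $m'_j u+n'_j v<\log(1+c)-\log|a'_j(q)|$. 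Because $(0,0)$ is interior to $\Delta_{H'}$, $\mu(V)$ is a bounded convex polytope, and near the large radius limit Lemma~\ref{l3.2} places a neighborhood of the origin inside $\mu(V)$; hence $V$ is nonempty and each of its four quadrant components is homeomorphic to $\mu(V)$, hence simply connected.

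Existence of $W_{b_i}$ would come from an intermediate value theorem argument inside the specific quadrant $Q$ selected by the sign rule. Among the neighbors of $(0,0)$ in the finest triangulation there is at least one lattice point $b_{j_0}$ with both coordinates odd and hence with $a'_{j_0}(q)<0$; the quadrant $Q$ is chosen so that the monomial $a'_{j_0}X^{m'_{j_0}}Y^{n'_{j_0}}$ is negative in $Q$. At a point of $\mu(V)$ where all non-$b_i$ monomials are small (which exists near the large radius limit by Lemma~\ref{l3.2}), $H'\approx 1>0$ in $Q$. Moving toward the face of $\partial V\cap Q$ where $|a'_{j_0}X^{m'_{j_0}}Y^{n'_{j_0}}|$ approaches $1+c$, that monomial approaches $-(1+c)$, and the choice $(1+c)^{2D}<4/3$ bounds the accumulated contribution of the remaining subdominant monomials, forcing $H'<0$. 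Since $V\cap Q$ is connected and $H'$ changes sign on it, the zero set of $H'$ inside $V\cap Q$ contains a closed curve encircling the region where $H'>0$; this curve is the candidate $W_{b_i}$.

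For uniqueness, Theorem~\ref{t3.3} together with the injectivity of $\psi$ from Section~\ref{2.6} assigns the lattice point $(0,0)$ to a single connected component of $\mathbb{R}^2-A_{H'}$, so any second loop contained in $V$ would bound a second bounded component of the complement of the amoeba with the same degree $(0,0)$, contradicting injectivity; convexity of the Ronkin function (Lemma~\ref{l3.1}) supplies the rigidity needed to carry out this argument in the real category. The $(1+c)$-dominance on the whole of $W_{b_i}$ rather than merely at some of its points is guaranteed by the same quantitative bound $(1+c)^{2D}<4/3$ combined with the fact that along the loop no subdominant monomial can cross the $1+c$ threshold without producing a spurious second sign change of $H'$. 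The main obstacle will be the quadrant selection and sign change step: tracking the parity of every neighboring lattice point of $b_i$ in $T_\Sigma$ and verifying that at least one negative-coefficient monomial is really available near $\partial V$ to pull $H'$ below zero requires a careful combinatorial analysis of the finest triangulation around an interior vertex of $T_\Sigma$, and handling the case in which several subdominant monomials interact on the same face of $\partial V$ will require the numerical slack encoded in $(1+c)^{2D}<4/3$.
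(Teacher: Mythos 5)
Your overall architecture (normalize $b_i$ to the origin, introduce the dominance region, run an intermediate value argument between its center and its boundary, and control error terms with $(1+c)^{2D}<4/3$) matches the paper's, but the crucial step --- where and why $H'$ becomes negative on the boundary of the dominance region --- has a genuine gap. First, your quadrant-selection premise is false: an interior vertex of a finest triangulation need not have any neighbor with both coordinates odd. For local $\mathbb{P}^1\times\mathbb{P}^1$ the interior point $(0,0)$ has neighbors exactly $(\pm 1,0)$ and $(0,\pm 1)$, none of which is both-odd, so your criterion selects no quadrant at all. Second, and more seriously, arranging for \emph{one} neighbor monomial to be negative in $Q$ is not enough. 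On a face of $\partial V\cap Q$ the maximizing monomial can be any direct neighbor $b_{j_1}$, and the bound $(1+c)^{2D}<4/3$ does \emph{not} make the other neighbor monomials small --- they are merely bounded by $1+c$ in absolute value, and Proposition \ref{p2.5.4} only produces a small non-constant factor $f_r(q)$ for lattice points that are \emph{not} direct neighbors of $b_i$. If some neighbor monomial is positive and close to $1+c$ on that face, $H'$ can stay positive there, and the component you produce can escape $V\cap Q$ through such a face, so the dominance inequality would fail on part of it. The paper's fix is exactly the point you skip: the quadrant is determined by the parity of $(m_i,n_i)$ (Lemma \ref{l5.2.2}) so that \emph{every} direct-neighbor monomial is simultaneously negative there (using that neighbors of $b_i$ in a finest triangulation never have both relative coordinates even, hence $(-1)^{m_j+n_j}a_j(q)<0$), every positive monomial corresponds to a non-neighbor and is therefore bounded by $\tfrac43|f_r(q)|$, and the dominant index on each boundary face is forced to be a direct neighbor. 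Only then is $H'<0$ on the \emph{entire} boundary of the quadrant piece, which is what traps the component.

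Your uniqueness argument is also not the paper's and is shaky as stated: injectivity of $\psi$ concerns connected components of $\mathbb{R}^2-A_{H'}$ (the complement of the amoeba of the \emph{complex} curve), and two real ovals inside $V\cap Q$ need not produce two such complement components, so no contradiction is immediate. The paper instead shows (via Lemma \ref{l5.2.2} and a ratio comparison using Proposition \ref{p2.5.4}) that a single real component cannot be the domain component of two distinct interior lattice points, and then the count of $g$ bounded components plus the unbounded one against Harnack's bound $g+1$ pins down uniqueness. You would need to either adopt that route or substantially justify the passage from real ovals to amoeba-complement components.
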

For the convention, if $W_{b_i}$ and $b_i$ satisfy the condition of Theorem \ref{t5.2.1}, we call $W_{b_i}$ a domain component of $b_i$, and $b_i$ the domain term of $W_{b_i}$. 

\begin{proof}

Without loss of generality, we assume that $b_3(0,0)$ is an interior lattice point of polytope $P_{\Sigma}$ in $N'$. Then we hope there exists a connected component which is a domain component of $b_3$. 

Because $b_3$ is an interior lattice point, first, we claim that \begin{align*}C_{b_3}=\{(x,y)\in (\mathbb{R}^*)^2|1+c\ge|a_i(q)x^{m_i}y^{n_i}|,\forall i\ne3 \}\end{align*} is a bounded compact subset in $(\mathbb{R}^*)^2$ with the boundary \begin{align*}\partial C_{b_3}=\{(x,y)\in (\mathbb{R}^*)^2|1+c=|a_{i_0}(q)x^{m_{i_0}}y^{n_{i_0}}|\ge|a_{i}(q)x^{m_{i}}y^{n_{i}}|,\exists i_0,\forall i\}.\end{align*} In $(\mathbb{R}^*)^2$, $C_{b_3}$ have 4 parts which are \begin{align*}C_{b_3,++}&=\{(x,y)\in C_{b_3},x>0,y>0\}, \\C_{b_3,+-}&=\{(x,y)\in C_{b_3},x>0,y<0\}, \\C_{b_3,-+}&=\{(x,y)\in C_{b_3},x<0,y>0\}, \\C_{b_3,--}&=\{(x,y)\in C_{b_3},x<0,y<0\}.\end{align*}

We prove our claim now. Obviously, $C_{b_3}=\bigsqcup C_{b_3,\pm\pm}$. We only need to prove $C_{b_3,++}$ is compacted, bounded with the boundary $$\{(x,y)\in (\mathbb{R}^+)^2|1+c=|a_{i_0}(q)x^{m_{i_0}}y^{n_{i_0}}|\ge|a_{i}(q)x^{m_{i}}y^{n_{i}}|, \exists i_0,\forall i\}.$$

Here `bounded' is obvious because $|x|\le 1+c,|y|\le 1+c$.

To show $C_{b_3,++}$ is compact, we only need to show its embedding in $\mathbb{R}^2$ is compact. It is to say $C_{b_3,++}$ is closed in $\mathbb{R}^2$. We only need to prove the closure of ${C_{b_3,++}}$ in $\mathbb{R}^2$ doesn't intersect $x=0$ or $y=0$. It suffices to show $\exists \epsilon_0>0,\epsilon_1>0$ making any $(x_0,y_0)\in C_{b_3,++}$ satisfy $|x_0|>\epsilon_0,|x_1|>\epsilon_1$.

\begin{figure}
    \centering
    \includegraphics[scale=0.4]{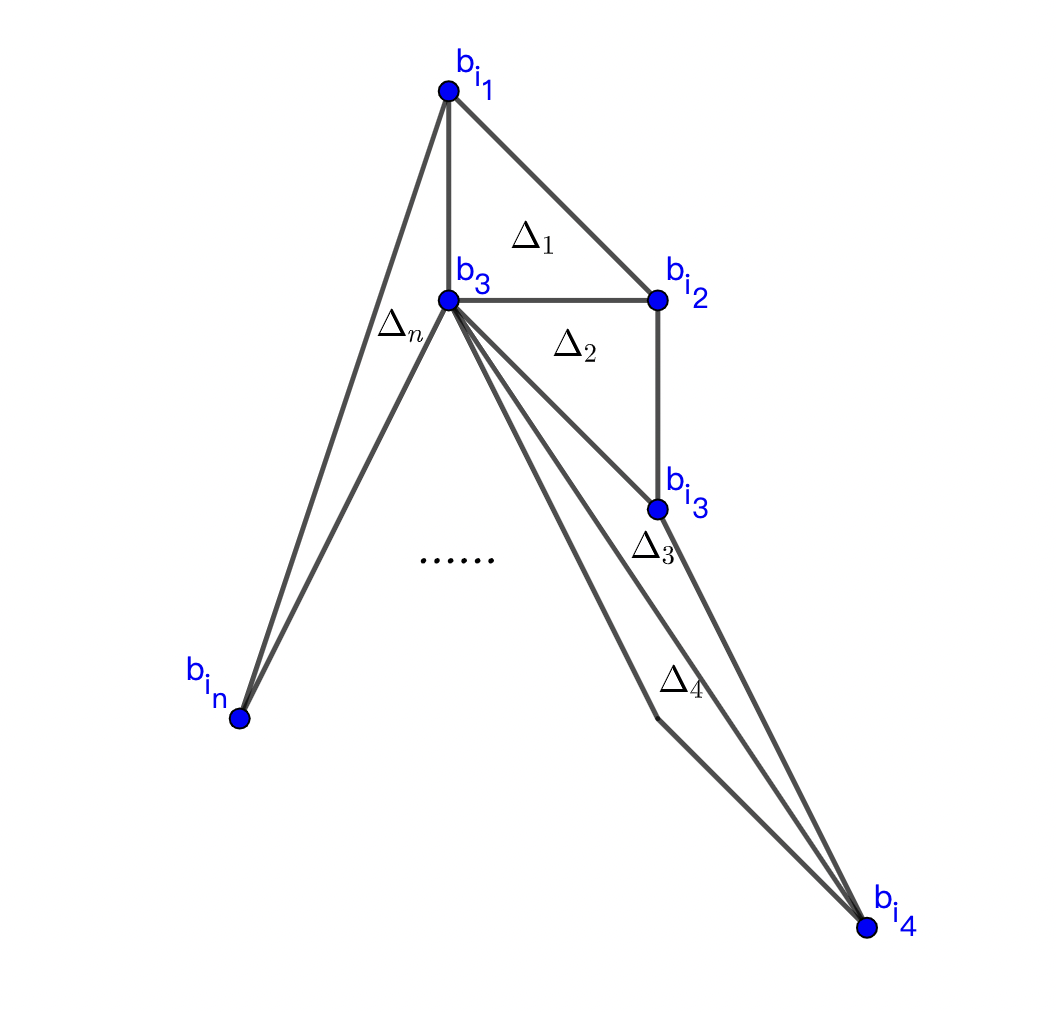}
    \caption{}
    \label{f19}
\end{figure}
Because $b_3(0,0)$ is the interior point of $\Sigma$, we assume that $b_3$ is the vertex of triangle $\Delta_1$, $\Delta_2$,..., $\Delta_n$ in the clockwise order as we show on Figure \ref{f19}, $\Delta_j=\Delta_{b_3b_{i_j}b_{i_{j+1}}}$, where $i_{n+1}=i_{1}$, $i_1=1$ , and $i_2=2$. Then there are n segments in $T_{\Sigma}$ connecting $b_3$ and $n$ other lattice points $b_{i_1}=b_1,b_{i_2}=b_2,...,b_{i_n}$. There must be two adjacent lattice points of $\{b_{i_r}\}_{r=1...n}$, which are denoted by $b_{i_{k}},b_{i_{k+1}}$, satisfying that $\Vec{a}=(-1,-1)$ has a non-negative integer linear coefficient combination under $\boldsymbol{b_{i_k}}$ and $\boldsymbol{b_{i_{k+1}}}$, as we show in Figure \ref{f20}. That is to say that there exists $\lambda_1,\lambda_2\in \mathbb{N}$ and $1\le k\le n$ making \begin{align*}\Vec{a}=\lambda_1\boldsymbol{b_{i_k}}+\lambda_2\boldsymbol{b_{i_{k+1}}}.\end{align*} Then we consider the following inequality \begin{align*}\frac{4}{3}&>(1+c)^{2D}\\&\ge(1+c)^{\lambda_1+\lambda_2}\\&\ge|a_{i_k}(q)x^{m_{i_k}}y^{n_{i_k}}|^{\lambda_1}|a_{i_{k+1}}(q)x^{m_{i_{k+1}}}y^{n_{i_{k+1}}}|^{\lambda_2}\\&=|a_{i_k}(q)|^{\lambda_1}|a_{i_{k+1}}(q)|^{\lambda_2}|x|^{-1}|y|^{-1}.\end{align*} Because $|y|\le1+c$, we then calculate the lower boundary of $|x|$ that \begin{align*}|x|>\frac{3}{4(1+c)}|a_{i_k}(q)|^{\lambda_1}|a_{i_{k+1}}(q)|^{\lambda_2}.\end{align*} Now we have shown that $C_{b_3,++}$ is compact with the boundary $$\partial C_{b_3,++}=\{(x,y)\in (\mathbb{R}^+)^2|1+c=|a_{i_0}(q)x^{m_{i_0}}y^{n_{i_0}}|\ge|a_{i}(q)x^{m_{i}}y^{n_{i}}|, \exists i_0,\forall i\}.$$
\begin{figure}
    \centering
    \includegraphics[scale=0.4]{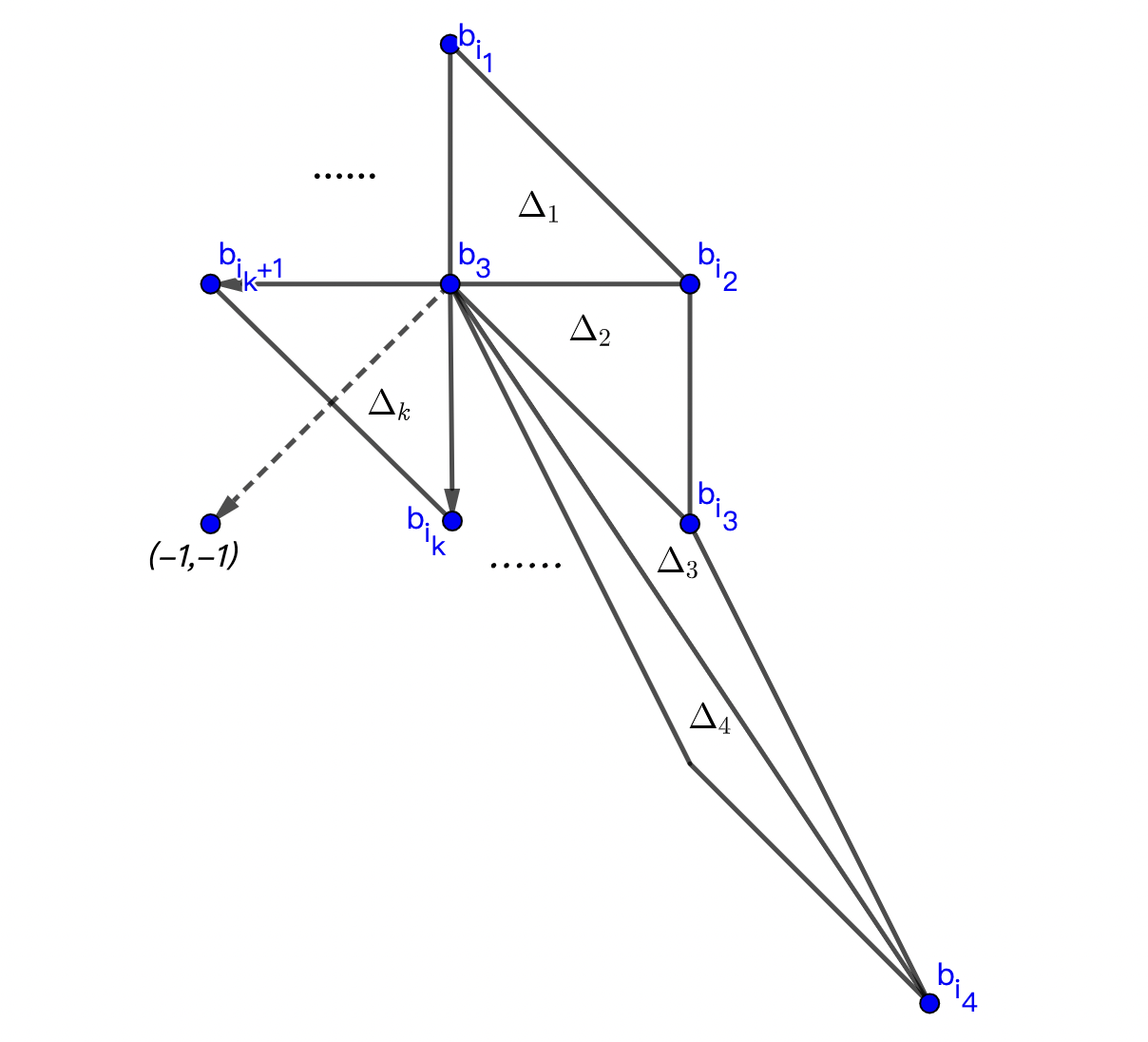}
    \caption{}
    \label{f20}
\end{figure}

Then we want to find a connected component of the mirror curve inside $C_{b_3,--}$. Near the large radius limit, it is easy to find an interior point $(x_0,y_0)=(-\frac{1}{3},-\frac{1}{3})$ of $C_{b_3,--}$ which makes $H(x_0,y_0,q)>0$, because $$H(-\frac{1}{3},-\frac{1}{3},q)=1-\frac{1}{3}-\frac{1}{3}+\sum\limits_{i=4}^{p+3}a_i(q)(-\frac{1}{3})^{m_i+n_i}=\frac{1}{3}+\sum\limits_{i=4}^{p+3}a_i(q)(-\frac{1}{3})^{m_i+n_i}$$, where $(-\frac{1}{3},-\frac{1}{3})$ is an interior point of $C_{b_3,--}$. Except $1$, all monomials in $H(x,y,q)$, taking values at $(-\frac{1}{3},-\frac{1}{3})$, are either $-\frac{1}{3}$ or non-constant monomials of $q$. Thus near the large radius limit, $H(-\frac{1}{3},-\frac{1}{3},q)>0$. 

Next, we show that for all $(x,y)$ on the boundary $\partial C_{b_3,--}$ of $C_{b_3,--}$, near the large radius limit, $H(x,y,q)<0$. With such a result, by the intermediate value theorem, there exists at least one connected component to be the domain component of $b_3(0,0)$.

Now we consider the mirror curve given by 
$$H(X,Y,q)=1+X+Y+\sum\limits_{i=4}^{p+3}a_i(q)X^{m_i}Y^{n_i}.$$
If there exist $(x_0,y_0)$ and $i_0$ which make $$1+c=|a_{i_0}(q)x_0^{m_{i_0}}y_0^{n_{i_0}}|\ge|a_i(q)x_0^{m_i}y_0^{n_i}|, \quad i=1,2,..,p+3,$$ we could prove $b_{i_0}$ must connect $b_3$ directly by a 1-cell in $T_{\Sigma}$. If not, there exists $\Delta_j$ with vertices $b_3,b_{i_j},b_{i_{j+1}}$, which makes $\boldsymbol{b_{i_0}}=\lambda_1\boldsymbol{b_{i_j}}+\lambda_2\boldsymbol{b_{i_{j+1}}}$, where $\lambda_1,\lambda_2$ are non-negative integers and $\lambda_1+\lambda_2\ge 2$ (because $b_{i_0}$ doesn't directly connect $b_3$). By Proposition \ref{p2.5.4}, we know that $$\frac{a_{i_0}(q)a_{3}(q)^{\lambda_1+\lambda_2-1}}{a_{i_j}(q)^{\lambda_1}a_{i_{j+1}}(q)^{\lambda_2}}=\frac{a_{i_0}(q)}{a_{i_j}(q)^{\lambda_1}a_{i_{j+1}}(q)^{\lambda_2}}$$ is a non-constant monomial of $q$ denoted by $f_{i_0}(q)$. 
Then we get \begin{align*}
    \frac{a_{i_0}(q)x_0^{m_{i_0}}y_0^{n_{i_0}}}{(a_{i_j}(q)x_0^{m_{i_j}}y_0^{n_{i_j}})^{\lambda_1}(a_{i_{j+1}}(q)x_0^{m_{i_{j+1}}}y_0^{n_{i_{j+1}}})^{\lambda_2}}=f_{i_0}(q).
\end{align*} Because near the large radius limit,  $|f_{i_0}(q)|<(1+c)^{1-\lambda_1-\lambda_2}$, we have \begin{align*}
    1+c&=|a_{i_0}(q)x_0^{m_{i_0}}y_0^{n_{i_0}}|\\&=|f_{i_0}(q)||a_{i_j}(q)x_0^{m_{i_j}}y_0^{n_{i_j}}|^{\lambda_1}|a_{i_{j+1}}(q)x_0^{m_{i_{j+1}}}y_0^{n_{i_{j+1}}}|^{\lambda_2}\\&\le|f_{i_0}(q)|(1+c)^{\lambda_1+\lambda_2}\\&<1+c.
\end{align*}
This is a contradiction. Thus $b_{i_0}$ must directly connect $b_3$. 

Next, we define $$\partial C_{b_3,--,j}=\{(x_0,y_0)\in (\mathbb{R}^-)^2|1+c=|a_{j}(q)x_0^{m_{j}}y_0^{n_{j}}|\ge|a_i(q)x_0^{m_i}y_0^{n_i}|,\forall i\ne j\}.$$ We have proved $\partial C_{b_3,--,j}= \emptyset$ if $b_j$ doesn't directly connect $b_3$ by 1-cell in $T_{\Sigma}$. So $\partial C_{b_3,--}=\bigcup_{j\in V}\partial C_{b_3,--,j}$, where $V$ is the index set of each $j$ which makes $b_j$ connect $b_3$ directly by a 1-cell in $T_{\Sigma}$.
 
It is enough to prove $H(x_0,y_0,q)<0$ for all points $(x_0,y_0)$ on $\partial C_{b_3,--,j}$ for a fixed $j\in V$ near the large radius limit. If $(x_0,y_0)\in \partial C_{b_3,--,j}$ for some fixed $j\in V$, then $m_j, n_j$ could not be even at the same time because $T_{\Sigma}$ is the finest triangulation. Then we have $|a_j(q)x_0^{m_j}y_0^{n_j}|=1+c$. Also, because $m_j,n_j$ could not be even at the same time, we have $(-1)^{m_j+n_j}a_j(q)<0$. One could prove this by enumerating the 3 cases. Then $a_j(q)x_0^{m_j}y_0^{n_j}=-1-c$. At this time, $$H(x_0,y_0,q)=-c+\sum_{i\ne 3,j}a_i(q)(1+c)^{m_i}y_0^{n_i}.$$ All the $b_j$ which connects $b_3$ directly correspond to negative value $a_j(q)x_0^{m_j}y_0^{n_j}<0$ because $m_j$ and $n_j$ couldn't be even at the same time. 

Among all the monomials in $H(x_0,y_0,q)$, any positive one $a_r(q)x_0^{m_r}y_0^{n_r}$ corresponds to a $b_r'$ which have two even coordinate components that $m_r$ and $n_r$ are both even. If $b_r'\ne (0,0)$, then there exist $b_{i_s}$ and $b_{i_{s+1}}$ which make $$\boldsymbol{b_r}=\lambda_1\boldsymbol{b_{i_s}}+\lambda_2\boldsymbol{b_{i_{s+1}}},$$ where $\lambda_1$ and $\lambda_2$ are non-negative integers, and $\lambda_1+\lambda_2\ge2$. By Proposition \ref{p2.5.4}, we know that \begin{align*}
    &\frac{a_r(q)x_0^{m_r}y_0^{n_r}}{(a_{i_s}(q)x_0^{m_{i_s}}y_0^{n_{i_s}})^{\lambda_1}(a_{i_{s+1}}(q)x_0^{m_{i_{s+1}}}y_0^{n_{i_{s+1}}})^{\lambda_2}}\\=&\frac{a_r(q)}{a_{i_s}(q)^{\lambda_1}a_{i_{s+1}}(q)^{\lambda_2}}
    \end{align*} is a non-constant monomial of $q$ denoted by $f_r(q)$. Because for any $i_s$, we have \begin{align*}
  &|a_r(q)x_0^{m_r}y_0^{n_r}|\\=&|f_r(q)||(a_{i_s}(q)x_0^{m_{i_s}}y_0^{n_{i_s}})^{\lambda_1}(a_{i_{s+1}}(q)x_0^{m_{i_{s+1}}}y_0^{n_{i_{s+1}}})^{\lambda_2}|\\\le&|f_r(q)|(1+c)^{\lambda_1+\lambda_2}\\\le&|f_r(q)|(1+c)^{2D}\\<&\frac{4}{3}|f_r(q)|     \end{align*} 
Therefore, near the large radius limit, $$H(x_0,y_0,q)<-c+\frac{4}{3}\sum_{r}|f_r(q)|<0.$$ Then we have found a domain connected component $W_{b_3}$ of $b_3(0,0)$. Generally, we could define $W_{b_i}$ by taking the affine coordinate change given by a fixed flag $(\tau,\sigma)$ which has $b_{i}$ as the origin, then under the coordinate $(X_{(\tau,\sigma)},Y_{(\tau,\sigma)})$, we find the connected component $W_{b_i}$ which is a domain component of $b_i$. By the affine equivalence, we could write down this connected component by the coordinate $(X,Y)$. It is easy to see the domain component of $b_i$ is irrelevant to the flag we choose.

We only need to prove that near the large radius limit, any connected component $W_{b_i}$ which we have given before could only be the domain component of a unique interior lattice point in $P_{\Sigma}$. It is a complicated proof. In the beginning, we state a lemma.

\begin{lemma}
\label{l5.2.2}
Near the large radius limit, any connected component $W_{b_i}$ which is the domain component of the interior lattice point $b_i(m_i,n_i)$ as we defined previously locates in one of the four components of $(\mathbb{R}^*)^2$: $(\mathbb{R}^-)^2$, $(\mathbb{R}^+)^2$, $(\mathbb{R}^+)*(\mathbb{R}^-)$, $(\mathbb{R}^-)*(\mathbb{R}^+)$ uniquely determined by the parity of $(m_i,n_i)$. Specifically, when $m_i$ and $n_i$ are both even, $W_{b_i}$ locates in $(\mathbb{R}^-)^2$. When $m_i$ and $n_i$ are both odd, $W_{b_i}$ locates in $(\mathbb{R}^+)^2$. When $m_i$ is even and $n_i$ is odd, $W_{b_i}$ locates in $(\mathbb{R}^+)*(\mathbb{R}^-)$. When $m_i$ is odd and $n_i$ is even, $W_{b_i}$ locates in $(\mathbb{R}^-)*(\mathbb{R}^+)$.
\end{lemma}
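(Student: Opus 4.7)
The plan is to reduce the general case to the origin-case already established in Theorem~\ref{t5.2.1} by passing to flag coordinates, and then to track the signs of $X$ and $Y$ through that coordinate change by a short mod-$2$ computation.

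I would begin by fixing a flag $(\tau,\sigma)$ of $\Sigma$ having $b_i$ as its origin, with $b_{i_1}, b_{i_2}$ the other two vertices of $\sigma$. By Proposition~\ref{p2.5.2} the coordinate change is
\[
X_{(\tau,\sigma)} = X^a Y^b \frac{a_{i_1}(q)}{a_i(q)}, \qquad Y_{(\tau,\sigma)} = X^c Y^d \frac{a_{i_2}(q)}{a_i(q)},
\]
with $(a,b) = b_{i_1}-b_i$, $(c,d)=b_{i_2}-b_i$, and $ad-bc = 1$ since the triangle has area $1/2$. By the remark at the end of Section~\ref{5.1}, the transformed coefficients $a_j'(q)$ still satisfy the sign convention, so the $b_3=(0,0)$ argument of Theorem~\ref{t5.2.1} applies verbatim to $H_{(\tau,\sigma)}$ and locates the domain component of the new origin---which is the image of $W_{b_i}$---inside $(\mathbb{R}^-)^2$. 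Hence on $W_{b_i}$ we have $X_{(\tau,\sigma)} < 0$ and $Y_{(\tau,\sigma)} < 0$.

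To translate these sign conditions back, I would introduce $\epsilon_X, \epsilon_Y \in \{0,1\}$ as indicators of $X<0$ and $Y<0$, and set $\mu_j = m_j \bmod 2$, $\nu_j = n_j \bmod 2$. The sign rule gives $\mathrm{sign}(a_j(q)) = (-1)^{\mu_j\nu_j}$, so taking signs in the coordinate change and reducing the exponents modulo $2$ yields the system
\[
\begin{pmatrix} a & b \\ c & d \end{pmatrix}\begin{pmatrix} \epsilon_X \\ \epsilon_Y \end{pmatrix} \equiv \begin{pmatrix} 1 + \mu_i\nu_i + \mu_{i_1}\nu_{i_1} \\ 1 + \mu_i\nu_i + \mu_{i_2}\nu_{i_2} \end{pmatrix} \pmod 2.
\]
Because $ad-bc = 1$, this has a unique solution mod $2$, and I would verify by direct substitution that $(\epsilon_X,\epsilon_Y) = (1+\nu_i,\, 1+\mu_i)$ is it. After using $a \equiv \mu_{i_1}+\mu_i$ and $b \equiv \nu_{i_1}+\nu_i$, the check reduces to the identity $a+b+ab \equiv 1 \pmod 2$, equivalently $(a+1)(b+1) \equiv 0 \pmod 2$; this holds because each edge $b_ib_{i_1}$ of the finest triangulation is primitive, so $\gcd(a,b)=1$ and at least one of $a,b$ is odd, and the same argument handles $(c,d)$. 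Reading off the four parity cases $(\mu_i,\nu_i)\in\{0,1\}^2$ then recovers the four quadrants stated in the lemma.

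The main obstacle is the final mod-$2$ verification: one must see how the apparent dependence on the auxiliary vertices $b_{i_1}, b_{i_2}$ collapses, and the essential input is precisely the primitivity condition $\gcd(a,b)=\gcd(c,d)=1$ coming from $T_\Sigma$ being finest. Without this, $(1+\nu_i,1+\mu_i)$ would not be forced upon us and the predicted quadrant could a priori depend on the choice of flag, so the argument genuinely requires the smoothness hypothesis on $X_\Sigma$.
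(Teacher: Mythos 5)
Your proposal is correct and follows essentially the same route as the paper: reduce to the origin case via the flag coordinate change, use $ad-bc=1$ to see that the two sign conditions $X_{(\tau,\sigma)}<0$, $Y_{(\tau,\sigma)}<0$ determine the signs of $X,Y$ uniquely, and verify the predicted quadrant. Your $\mathbb{F}_2$ linear system with the identity $(a+1)(b+1)\equiv 0 \pmod 2$ (from primitivity of the edges) is just a cleaner, uniform packaging of the paper's four-case parity enumeration, and it checks out.
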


\begin{proof}
Fix an flag $(\tau,\sigma)$ with $I_{\sigma}'=\{i_1,i_2,i_3\}$ and $I_{\tau}'=\{i_2,i_3\}$. If \begin{align*}\boldsymbol{b_{i_1}}=&\boldsymbol{b_{i_3}}+a\boldsymbol{e_1}+b\boldsymbol{e_2}\\ \boldsymbol{b_{i_2}}=&\boldsymbol{b_{i_3}}+c\boldsymbol{e_1}+d\boldsymbol{e_2}.\end{align*} Then for any $(x_1,y_1)\in W_{b_{i_3}}$, because $(x_{1,(\tau,\sigma)},y_{1,(\tau,\sigma)})$ locates in $W_{b_3}'$, we have 
\begin{align*}
    x_{1,(\tau,\sigma)}=x_1^ay_1^b\frac{a_{i_1}(q)}{a_{i_3}(q)}<0\\
    y_{1,(\tau,\sigma)}=x_1^cy_1^d\frac{a_{i_2}(q)}{a_{i_3}(q)}<0,
\end{align*} where we denote the domain component of $b_3'(0,0)$ written under the coordinate $(X_{(\tau,\sigma)},Y_{(\tau,\sigma)})$ of our fixed flag by $W_{b_3}'$. 

Because $ad-bc=1$, then the signs of $x_1$ and $y_1$ are uniquely determined by $a,b,c,d$ and $a_{i_1}(q),a_{i_2}(q),a_{i_3}(q)$. Next, we finish the proof of the lemma in four cases.

\begin{enumerate}
\item[\textbf{Case 1}]: When $m_{i_3}$ and $ n_{i_3}$ are both even, then $a_{i_3}(q)>0$. Because $ad-bc=1$, $a$ and $b$ contain at least one odd integer. If $a$ is even and $b$ is odd, then $m_{i_1}$ is even and $n_{i_1}$ is odd. Thus $a_{i_1}(q)>0$. Then if $x_1<0$ and $y_1<0$, the sign of $$x_1^ay_1^b\frac{a_{i_1}(q)}{a_{i_3}(q)}$$ is the same as $(-1)^b$, then we get $$x_1^ay_1^b\frac{a_{i_1}(q)}{a_{i_3}(q)}<0.$$ Similarly, if $a$ is odd and $b$ is even, we have $a_{i_1}(q)>0$. When $x_1<0$ and $y_1<0$, we have $$x_1^ay_1^b\frac{a_{i_1}(q)}{a_{i_3}(q)}<0.$$ Besides, if $a$ and $b$ are both odd, we have $a_{i_1}(q)<0$. When $x_1<0$ and $y_1<0$, we have $$x_1^ay_1^b\frac{a_{i_1}(q)}{a_{i_3}(q)}<0.$$ Then if $m_{i_3}$ and $n_{i_3}$ are both even, we could prove $$x_1^ay_1^b\frac{a_{i_1}(q)}{a_{i_3}(q)}<0,$$ and similarly $$x_1^cy_1^d\frac{a_{i_2}(q)}{a_{i_3}(q)}<0.$$ Because the signs of $x_1,y_1$ uniquely determine the sign of $x_{1,(\tau,\sigma)},y_{1,(\tau,\sigma)}$, we know $(x_1,y_1)\in (\mathbb{R}^-)^2$. Thus we get $W_{b_{i_3}}\subset (\mathbb{R}^-)^2$

\item[\textbf{Case 2}]: When $m_{i_3}$ and $n_{i_3}$ are both odd, then $a_{i_3}(q)<0$. Because $ad-bc=1$, $a$ and $b$ could not be even at the same time, and $c$ and $d$ could not be even at the same time as well, we know that $a_{i_1}(q)>0$ and $a_{i_2}(q)>0$. If $x_1>0,y_1>0$, $$x_1^ay_1^b\frac{a_{i_1}(q)}{a_{i_3}(q)}<0$$ $$x_1^cy_1^b\frac{a_{i_1}(q)}{a_{i_3}(q)}<0$$ satisfy the signs of $x_{1,(\tau,\sigma)},y_{1,(\tau,\sigma)}$. Similarly, we have $W_{b_{i_3}}\subset (\mathbb{R}^+)^2$.

\item[\textbf{Case 3}]: $m_{i_3}$ is odd. $n_{i_3}$ is even. 

\item[\textbf{Case 4}]: $m_{i_3}$ is even. $n_{i_3}$ is odd.
\end{enumerate}

Cases 3 and 4 are just similar, we only prove Lemma \ref{l5.2.2} in case 3. Because $m_{i_3}$ is odd, and $n_{i_3}$ is even, $a_{i_3}(q)>0$, where $(a,b)$ also have three kinds of parity. When $a$ is odd, and $b$ is even, $a_{i_1}(q)>0$, then $x_1<0$ and $y_1>0$ make $$x_1^ay_1^b\frac{a_{i_1}(q)}{a_{i_3}(q)}<0.$$ When $a$ is even and $b$ is odd, $a_{i_1}(q)<0$. We have $$x_1^ay_1^b\frac{a_{i_1}(q)}{a_{i_3}(q)}<0.$$ When $a$ and $b$ are both odd, $a_{i_1}(q)>0$. We have $$x_1^ay_1^b\frac{a_{i_1}(q)}{a_{i_3}(q)}<0.$$ Thus we have proved $W_{b_{i_3}}\subset (\mathbb{R}^-)*(\mathbb{R}^+)$. 

\end{proof}

With Lemma \ref{l5.2.2}, we only need to show the domain connected components of $b_{r_1}$ and $b_{r_2}$ have an empty intersection when $(m_{r_1},n_{r_1})$ has the same parity with $(m_{r_2},n_{r_2})$. Without loss of generality, we may assume $r_1=3$ and $b_{r_1}(0,0)$, then $m_{r_2}$ and $n_{r_2}$ are both even. If $(x_0,y_0)\in W_{b_{r_1}}\cap W_{b_{r_2}}$, then we consider $f_1=1$, $f_2=a_{r_2}(q)x_0^{m_{r_2}}y_0^{n_{r_2}}$. Choose a flag $(\tau,\sigma)$ which contains $b_{r_2}$ as the origin. Under the coordinates of $(\tau,\sigma)$, we assume $b_3=b_e'$ has the coordinate $(m_e',n_e')$ under the flag $(\tau,\sigma)$, where $m_e',n_e'$ must be even because $\boldsymbol{b_{i_1}}-\boldsymbol{b_{i_3}}$ and $\boldsymbol{b_{i_2}}-\boldsymbol{b_{i_3}}$ form a $\mathbb{Z}$-basis of $\mathbb{Z}e_1\bigoplus\mathbb{Z}e_2$. We write that $$\boldsymbol{b_{i_1}}-\boldsymbol{b_{i_3}}=a\boldsymbol{e_1}+b\boldsymbol{e_2}$$ $$\boldsymbol{b_{i_2}}-\boldsymbol{b_{i_3}}=c\boldsymbol{e_1}+d\boldsymbol{e_2},$$ and consider $$f_1'=a_e'(q)X_{1,(\tau,\sigma)}^{m_e'}Y_{1,(\tau,\sigma)}^{n_e'}, f_2'=1.$$ Here the coordinate change is that \begin{align*}
    X_{1,(\tau,\sigma)}=x_1^ay_1^b\frac{a_{i_1}(q)}{a_{i_3}(q)}\\Y_{1,(\tau,\sigma)}=x_1^cy_1^d\frac{a_{i_2}(q)}{a_{i_3}(q)}.
\end{align*} Therefore, we know $$f_1=a_{i_3}(q)x_1^{m_{i_3}}y_1^{n_{i_3}}f_1'$$ $$f_2=a_{i_3}(q)x_1^{m_{i_3}}y_1^{n_{i_3}}f_2'.$$ Then we have $$\frac{|f_1|}{|f_2|}=\frac{|f_1'|}{|f_2'|}.$$

When $(x_1,y_1)\in W_{b_{3}}$, and $b_{r_2}$ does not connect $b_3$ directly by any 1-cell in $T_{\Sigma}$, we have proved $$|f_2|<\frac{4}{3}|f_{r_2}(q)|,$$ where $f_{r_2}(q)$ is a non-constant monomial of $q$. Then we have $$\frac{|f_1|}{|f_2|}>\frac{3}{4|f_{r_2}(q)|}>1$$ near the large radius limit. But we could also similarly prove near the large radius limit, $$\frac{|f_2'|}{|f_1'|}>1$$ because $(x_1,y_1)\in W_{b_{r_2}}$. Thus we have the following contradiction 
\begin{align*}
    1>\frac{|f_1'|}{|f_2'|}=\frac{|f_1|}{|f_2|}>1.
\end{align*}
\end{proof}
We have finished the proof of Theorem \ref{t5.2.1}. By Theorem \ref{t5.2.1}, under our choice of coefficients near the large radius limit, any interior point of $P_{\Sigma}$ is the domain term of a unique connected component on the real toric surface which does not intersect axes of $X_{P_{\Sigma}}$. Besides, because there is still one connected component intersecting axes of $X_{P_{\Sigma}}$, we get the following corollary.

\begin{corollary}
\label{c5.2.3}
Near the large radius limit, if $a_i(q)<0$ when $m_i$ and $n_i$ are both odd, and $a_i(q)>0$ when $m_i$ or $n_i$ is even, the mirror curve given by $H(X,Y,q)=0$ is an M-curve. 
\end{corollary}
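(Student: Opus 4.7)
The plan is to assemble the corollary from three ingredients already in hand: Harnack's inequality as an upper bound, Theorem \ref{t5.2.1} as a source of bounded components indexed by interior lattice points, and the intermediate value argument at the start of Section \ref{5.2} as a source of one further axis-meeting component. Let $g$ denote the number of interior lattice points of $P_\Sigma$, which by the discussion before Definition \ref{def:M-curve} equals the genus of the compactified smooth curve $\bar A$ in $X_{\Delta_H}$. Harnack's inequality then bounds the number of connected components of $\mathbb{R}\bar A\subset \mathbb{R}X_{\Delta_H}$ from above by $g+1$, so it suffices to exhibit at least $g+1$ pairwise distinct connected components near the large radius limit.

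First I would recall the axis-intersecting component: by the intermediate value computation already performed (applying $f(-\tfrac23)>0$ and $f(-\tfrac43)<0$ to the restriction of $H$ to each axis, after the appropriate flag change of coordinates), the closure of $\{H=0\}$ in $\mathbb{R}X_{\Delta_H}$ meets every axis $l_j$. The component of $\mathbb{R}\bar A$ carrying these axis intersections we call $W_{\mathrm{ax}}$; since the axes of $\mathbb{R}X_{\Delta_H}$ intersect each other at the torus-fixed points of $\mathbb{R}X_{\Delta_H}$, all axis intersections lie in a single connected component, so $W_{\mathrm{ax}}$ is unambiguous.

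Next I would invoke Theorem \ref{t5.2.1}: for every interior lattice point $b_i$ of $P_\Sigma$, there is a unique domain component $W_{b_i}$ of the mirror curve in $(\mathbb R^*)^2$, and these $g$ components are pairwise distinct. Lemma \ref{l5.2.2} distributes them among the four quadrants of $(\mathbb R^*)^2$ according to the parity of $(m_i,n_i)$, and the compactness part of the proof of Theorem \ref{t5.2.1} shows that each $W_{b_i}$ is bounded away from the coordinate axes; in particular $W_{b_i}\neq W_{\mathrm{ax}}$ for all interior $b_i$, since $W_{\mathrm{ax}}$ by construction meets the axes and hence contains points outside every $W_{b_i}$. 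Collecting $W_{\mathrm{ax}}$ together with the $g$ distinct domain components $\{W_{b_i}\}$ produces at least $g+1$ pairwise distinct components of $\mathbb{R}\bar A$, which combined with Harnack's upper bound forces equality; thus the mirror curve is an M-curve in the sense of Definition \ref{def:M-curve}.

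The essentially all of the technical work is already encapsulated in Theorem \ref{t5.2.1}, so there is no serious obstacle left. The only subtlety I would handle carefully is the claim that the $g$ bounded domain components are genuinely different from the axis-meeting component $W_{\mathrm{ax}}$; this is immediate because each $W_{b_i}$ lives at positive distance from the axes (the bound $|x|>\tfrac{3}{4(1+c)}|a_{i_k}(q)|^{\lambda_1}|a_{i_{k+1}}(q)|^{\lambda_2}$ and its analogue for $|y|$ obtained inside the proof of Theorem \ref{t5.2.1}), whereas $W_{\mathrm{ax}}$ contains points of the axes in the compactification.
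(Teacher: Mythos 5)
Your proposal is correct and follows essentially the same route as the paper: it combines the $g$ pairwise-distinct bounded domain components supplied by Theorem \ref{t5.2.1} with the one axis-meeting component obtained from the intermediate value argument at the start of Section \ref{5.2}, and closes the count with Harnack's inequality. The extra care you take in separating the domain components from the axis-meeting one (via the positive distance to the axes established inside the proof of Theorem \ref{t5.2.1}) is exactly the point the paper relies on implicitly.
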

\subsection{Cyclic Condition}
\label{5.3}
In this chapter, we show that near the large radius limit, under our choices of coefficients, the mirror curve satisfies the cyclic condition.

If $P_{\Sigma}$ contains sides $r_1,...,r_n$, and they correspond to axes $l_1,...,l_n$ in the cyclic order, first, we could show under the special coefficients we have fixed, for any $i$, $l_i$ intersects with a unique connected component of the mirror curve in $d_i$ points, where $d_i$ is the integer length of $r_i$. For the convention, we denote the only connected component intersecting with axes by $W$ and call it the unbounded component.  

\begin{theorem}
\label{t5.3.1}
Near the large radius limit, with $a_i(q)<0$ when $m_i$ and $n_i$ are both odd, and $a_i(q)>0$ when $m_i$ or $n_i$ is even, if $l_i$ is an axis of $\mathbb{R}X_{P_{\Sigma}}$ corresponding to side $r_i$ with integer length $d_i$, then the unbounded component $W$ of the mirror curve $H(X,Y,q)=0$ intersects with $l_i$ in $d_i$ points. \end{theorem}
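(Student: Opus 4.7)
The plan is to reduce the count of intersections $\{H=0\} \cap l_i$ to counting real roots of a univariate polynomial along $l_i$ and then apply the intermediate value theorem using the strong log-concavity supplied by Proposition \ref{p2.5.3}. By the affine equivalence of Proposition \ref{p2.5.2} I first choose a flag $(\tau,\sigma)$ whose origin $b_{i_3}$ is a vertex of $r_i$ and whose second vertex $b_{i_2}$ is the lattice point on $r_i$ adjacent to $b_{i_3}$. In the new coordinates the $d_i + 1$ lattice points of $r_i$ acquire coordinates $(0, k)$ for $k = 0, 1, \ldots, d_i$, all other lattice points of $P_\Sigma$ lie in the first quadrant, and the intersection of the mirror curve with $l_i$ is cut out in the affine chart at $b_{i_3}$ by the one-variable polynomial
$$P(Y) \;=\; \sum_{k=0}^{d_i} a_{j_k}'(q)\, Y^{k},$$
with $a_{j_0}'(q) = a_{j_1}'(q) = 1$ and the remaining $a_{j_k}'(q)$ being non-constant monomials of $q$. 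By the sign-preservation remark of Section \ref{5.1} every coefficient $a_{j_k}'(q)$ is positive, since the first coordinate of $(0, k)$ is even, and toric B\'ezout gives intersection number $d_i$, so the roots of $P$ in $\mathbb{C}$ exhaust the intersections with $l_i$.

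Next I exploit log-concavity. Because consecutive lattice points on $r_i$ are joined by $1$-cells of $T_\Sigma$ and satisfy $2 b_{j_k} = b_{j_{k-1}} + b_{j_{k+1}}$, Proposition \ref{p2.5.3} guarantees that $\delta_k := a_{j_{k-1}}'(q)\, a_{j_{k+1}}'(q) / a_{j_k}'(q)^2$ is a non-constant positive monomial of $q$ (the ratio is invariant under affine change of flag). Hence the cutoff values $t_k := a_{j_{k-1}}'(q)/a_{j_k}'(q)$ satisfy $t_{k+1}/t_k = 1/\delta_k$, which is arbitrarily large near the large radius limit, so $0 < t_1 \ll t_2 \ll \cdots \ll t_{d_i}$. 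Writing $\widetilde P(t) := P(-t) = \sum_{k=0}^{d_i} (-1)^k a_{j_k}'(q)\, t^k$ and taking log-midpoints $s_k := \sqrt{t_k t_{k+1}} = \sqrt{a_{j_{k-1}}'(q)/a_{j_{k+1}}'(q)}$ for $k = 1, \ldots, d_i - 1$, a telescoping estimate using the $\delta_l$'s shows that at $s_k$ the single monomial $(-1)^k a_{j_k}'(q)\, s_k^k$ dominates the sum of absolute values of every other monomial by a factor that is a positive power of a product of $\delta_l$'s. Hence $\mathrm{sign}\, \widetilde P(s_k) = (-1)^k$, and together with $\widetilde P(0) = 1 > 0$ and $\widetilde P(t) \sim (-1)^{d_i} a_{j_{d_i}}'(q)\, t^{d_i}$ as $t \to +\infty$ we obtain $d_i + 1$ test values at which $\widetilde P$ takes strictly alternating signs. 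The intermediate value theorem then yields $d_i$ distinct positive real roots of $\widetilde P$, which are all its roots because $\deg \widetilde P = d_i$.

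Finally, each bounded component $W_{b_j}$ produced by Theorem \ref{t5.2.1} is compactly contained in $C_{b_j} \subset (\mathbb{R}^*)^2$ and therefore cannot meet the toric boundary $l_i$; by Corollary \ref{c5.2.3} the only remaining component is the unbounded one $W$, so all $d_i$ intersection points lie on $W$. The main obstacle will be the domination step at the test points $s_k$: one must chain Proposition \ref{p2.5.3} so that every non-dominant monomial---not merely the two immediate neighbors of index $k$---is bounded by a product of the $\delta_l$'s that vanishes uniformly near the large radius limit.
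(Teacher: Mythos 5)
Your proposal is correct and follows essentially the same route as the paper: reduce via the affine equivalence to the case where $l_i$ is the $y$-axis, observe that the restriction of $H$ is a degree-$d_i$ polynomial with positive coefficients whose consecutive ratios are separated by non-constant monomials of $q$ (Propositions \ref{p2.5.3} and \ref{p2.5.4}), and extract $d_i$ distinct negative roots by the intermediate value theorem. The only difference is cosmetic — you test signs at the geometric midpoints $-\sqrt{t_k t_{k+1}}$ where a single monomial dominates, while the paper brackets each root inside $(-2\,a_{j_r}/a_{j_{r+1}},\,-\tfrac{1}{2}\,a_{j_r}/a_{j_{r+1}})$ where two consecutive monomials compete — and the "chaining" step you flag as the remaining obstacle is exactly what the paper carries out with the monomials $b_{r,t}(q)$ from Proposition \ref{p2.5.4}.
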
     

\begin{proof}
Given any flag $(\tau,\sigma)$, we consider the affine coordinate change 
\begin{align*}
X_{(\tau,\sigma)}=X^{a(\tau,\sigma)}Y^{b(\tau,\sigma)}\frac{a_{i_1}(q)}{a_{i_3}(q)}, 
\\
Y_{(\tau,\sigma)}=X^{c(\tau,\sigma)}Y^{d(\tau,\sigma)}\frac{a_{i_2}(q)}{a_{i_3}(q)}.  
\end{align*}
Under this coordinate change, the sides of $\Delta_H$ are sent to the sides of $\Delta_{H_{(\tau,\sigma)}}$, and any axis would be sent to a new axis in the new real toric surface $\mathbb{R}X_{\Delta_{H_{(\tau,\sigma)}}}$ defined by $\Delta_{H_{(\tau,\sigma)}}$. Then without loss of generality, we could assume that $l_1$ is the axis corresponding to the side $r_1$ of $P_{\Sigma}$ containing $b_3(0,0), b_{j_k}(0,k)$ as two vertices, and other lattice points in $\Sigma$ outside $r_1$ has the coordinate $(m,n)$ with $m>0$ as shown in Figure \ref{f21}. It is sufficient to show the mirror curve intersects with $l_1$ in $k$ different points.
\begin{figure}
    \centering
    \includegraphics[scale=0.3]{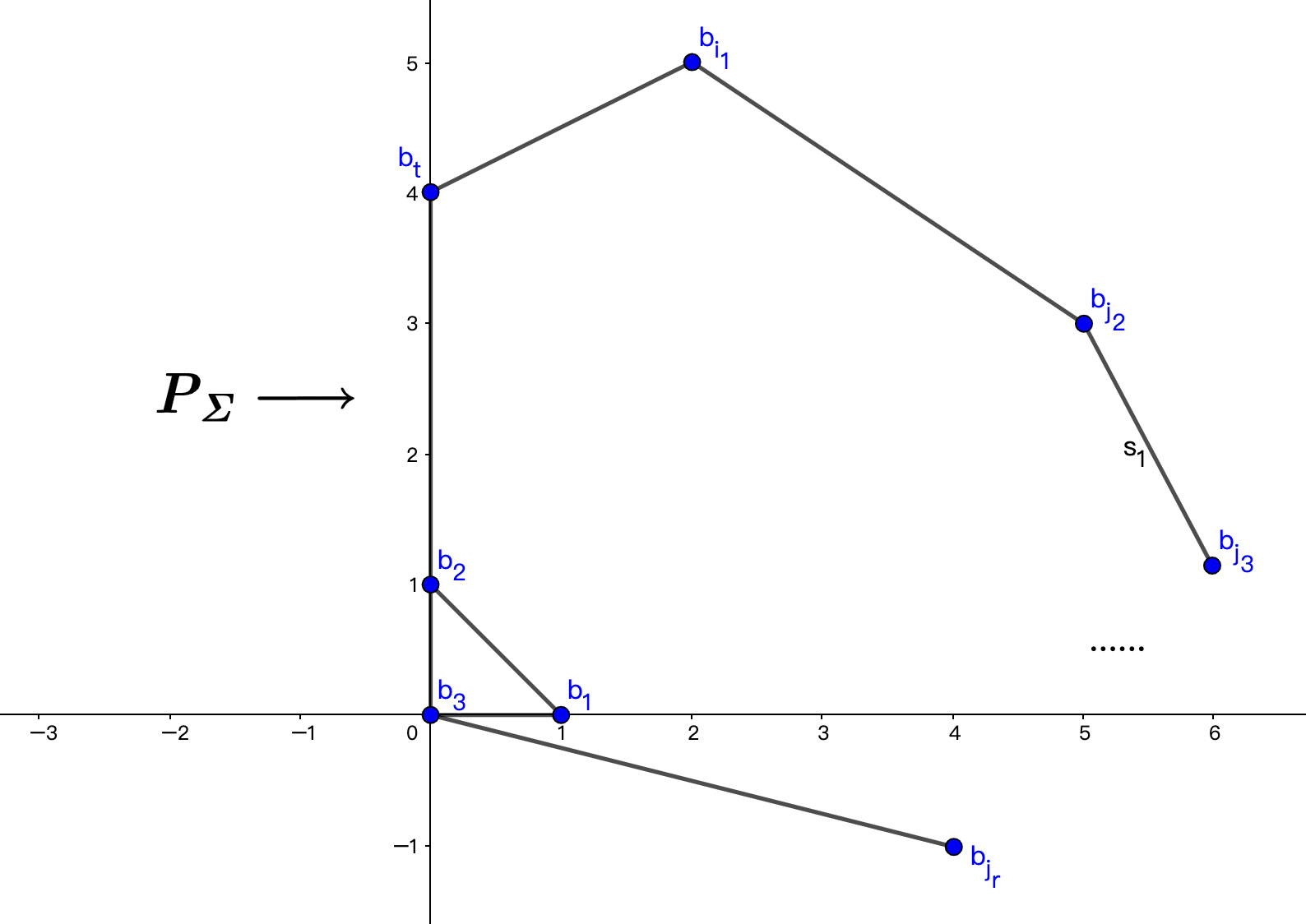}
    \caption{The polytope given by $\Sigma$}
    \label{f21}
\end{figure}

Then $l_1$ is the $y-axis$ at this time, by restricting $H(x,y,q)$ on $l_1$, we get \begin{align*}
    H|_{l_1}(y)=1+y+\sum\limits_{r=2}^ka_{j_r}(q)y^{r}=\sum\limits_{r=0}^ka_{j_r}(q)y^{r}
,\end{align*} where $j_r$ is the index which corresponds to vertex $(0,r)$. But by our choice of coefficients that all $a_{j_r}(q)>0$ with $m_{j_r}=0$, $H|_{l_1}$ only have negative zeroes.

Next, we show that $H|_{l_1}$ has exactly $k$ different negative zeroes near the large radius limit. For any fixed $0\le r\le k-1$, by Proposition \ref{p2.5.3},  $$b_r(q)=\frac{a_{j_{r+1}}(q)a_{j_{r-1}}(q)}{a_{j_{r}}(q)^2}$$ is a non-constant monomial of $q$. Moreover, if we choose any flag $(\tau,\sigma)$ which contains $(0,r)$ as the origin and $I_{\tau}=\{j_r,j_{r+1}\}$, then under such a flag, $b_{j_t}=(0,t)$ has the coordinate $(0,t-r)$. By Proposition \ref{p2.5.4}, we get $$b_{r,t}(q)=\frac{a_{j_t}(q)}{a_{j_r}(q)}*(\frac{a_{j_{r+1}}(q)}{a_{j_r}(q)})^{r-t}$$ is a non-constant monomial of $q$ when $t\ne r,r+1$.

Then we calculate \begin{align*}
    H|_{l_1}(-2\frac{a_{j_r}(q)}{a_{j_{r+1}}(q)})&=\sum\limits_{t=0}^k a_{j_t}(q)(-2\frac{a_{j_r}(q)}{a_{j_{r+1}}(q)})^t\\&=\frac{a_{j_r}(q)^{r+1}}{a_{j_{r+1}}(q)^{r}}(-(-2)^r+\sum\limits_{t\ne r,r+1}(-2)^tb_{r,t}(q)),
\end{align*} and \begin{align*}
    H|_{l_1}(-\frac{1}{2}\frac{a_{j_r}(q)}{a_{j_{r+1}}(q)})&=\sum\limits_{t=0}^k a_{j_t}(q)(-\frac{1}{2}\frac{a_{j_r}(q)}{a_{j_{r+1}}(q)})^t\\&=\frac{a_{j_r}(q)^{r+1}}{a_{j_{r+1}}(q)^{r}}(-(-\frac{1}{2})^{r+1}+\sum\limits_{t\ne r,r+1}(-\frac{1}{2})^tb_{r,t}(q)).
\end{align*} Near the large radius limit, we have $$|\sum\limits_{t\ne r,r+1}(-\frac{1}{2})^tb_{r,t}(q)|<\frac{1}{2^{r+1}}$$ $$|\sum\limits_{t\ne r,r+1}(-2)^tb_{r,t}(q)|<2^r.$$ So $H|_{l_1}(-2\frac{a_{j_r}(q)}{a_{j_{r+1}}(q)})$ has the same sign with $(-1)^{r+1}\frac{a_{j_r}(q)^{r+1}}{a_{j_{r+1}}(q)^{r}}$, and $H|_{l_1}(-\frac{1}{2}\frac{a_{j_r}(q)}{a_{j_{r+1}}(q)})$ has the same sign with $(-1)^{r}\frac{a_{j_r}(q)^{r+1}}{a_{j_{r+1}}(q)^{r}}$. We have shown that they have the opposite signs, by the intermediate value theorem, there exists at least a zero of $H|_{l_1}$ in $(-2\frac{a_{j_r}(q)}{a_{j_{r+1}}(q)},-\frac{1}{2}\frac{a_{j_r}(q)}{a_{j_{r+1}}(q)})$.

Because $b_r(q)=\frac{a_{j_{r+1}}(q)a_{j_{r-1}}(q)}{a_{j_r}(q)^2}$ is a non-constant monomial for any $r$, then near the large radius limit, we have $|b_r(q)|<\frac{1}{5}$. Therefore, $-\frac{1}{2}\frac{a_{j_r}(q)}{a_{j_{r+1}}(q)}<-2\frac{a_{j_{r-1}}(q)}{a_{j_r}(q)}$ for any $r$. We have seen that the $k$ intervals $$(-2\frac{a_{j_r}(q)}{a_{j_{r+1}}(q)},-\frac{1}{2}\frac{a_{j_r}(q)}{a_{j_{r+1}}(q)})\quad r=0,...,k-1$$ don't intersect with each other. Thus we have proved $H|_{l_1}$ has $k$ different zeroes.
\end{proof}

After we get Theorem \ref{t5.3.1}, we also hope that for any axis $l_i$, there exists an arc $u_i$ of the unbounded component $W$ intersects $l_i$ at $d_i$ points and doesn't intersect other axes as we show in Figure \ref{f22}. Actually, we have the following theorem.
\begin{figure}
    \centering
    \includegraphics[scale=0.4]{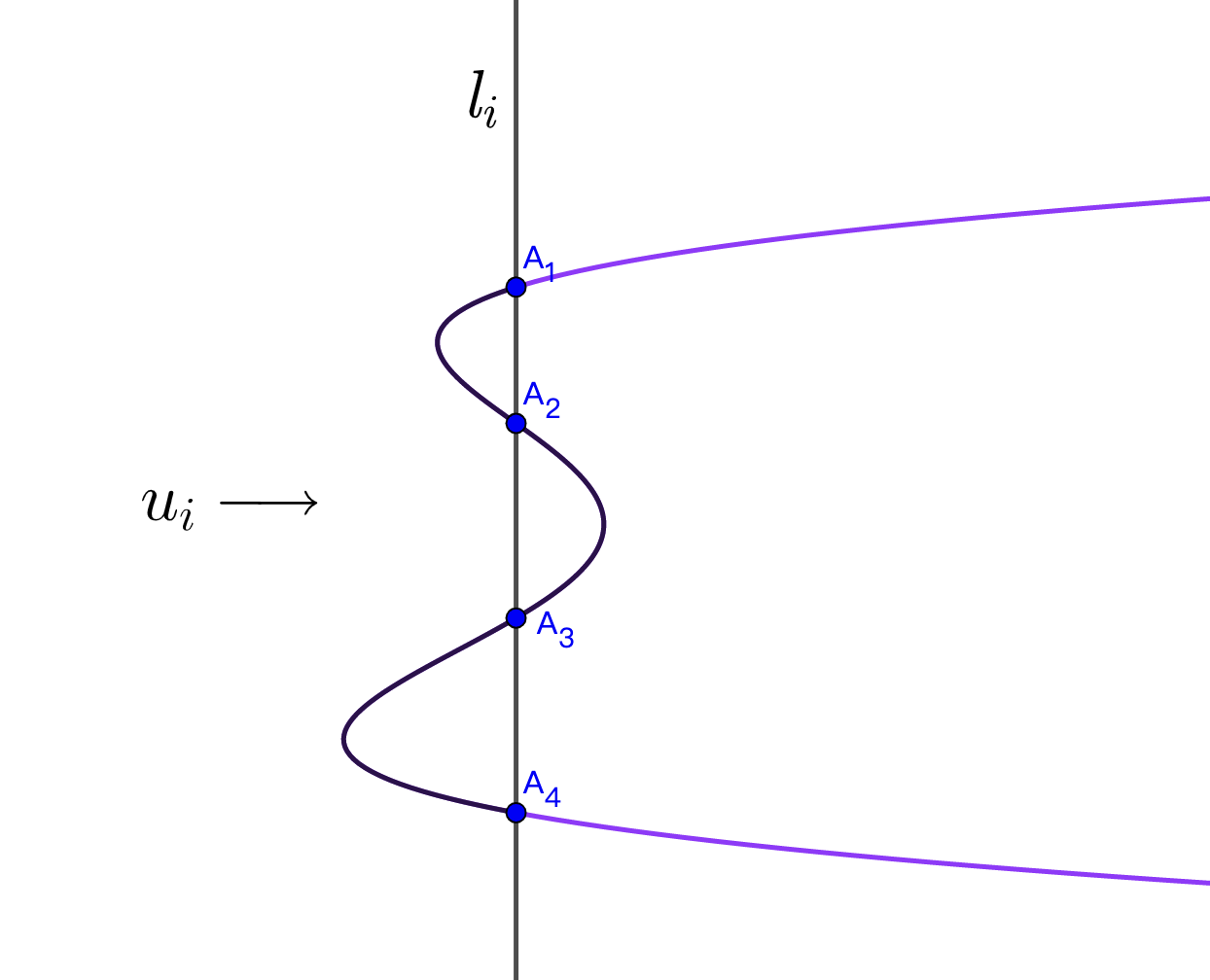}
    \caption{An example when $d_i=4$}
    \label{f22}
\end{figure}

\begin{theorem}
\label{t5.3.2}
Near the large radius limit, with $a_i(q)<0$ when $m_i,n_i$ are both odd and $a_i(q)>0$ when $m_i$ or $n_i$ is even, if $l_i$ is an axis of $\mathbb{R}X_{P_{\Sigma}}$ corresponding to side $r_i$ with integer length $d_i$, then their exists an arc of $W$ which intersecting with the axis $l_i$ in $d_i$ points and not intersecting with other axes.  
\end{theorem}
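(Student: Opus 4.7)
The plan is to build $u_i$ by assembling sub-arcs of $W$ between consecutive intersection points on $l_i$, each lying in one of the two quadrants of $(\mathbb{R}^*)^2$ adjacent to $l_i$, and constructed by combining the intermediate value theorem with the non-constant-monomial structure from Propositions \ref{p2.5.3} and \ref{p2.5.4}.

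First, apply Proposition \ref{p2.5.2} to pick affine coordinates so that $l_i = l_1$ is the $y$-axis, the lattice points on the corresponding side $r_1$ are $(0,0), (0,1), \ldots, (0,k)$ with $k = d_i$, and every other lattice point of $P_\Sigma$ has $m_j \geq 1$. By Theorem \ref{t5.3.1}, $H(0, y, q)$ has $k$ simple negative zeros $y_0^* > y_1^* > \cdots > y_{k-1}^*$, with $y_r^* \in J_r := (-2 a_{j_r}(q)/a_{j_{r+1}}(q), -\tfrac{1}{2} a_{j_r}(q)/a_{j_{r+1}}(q))$. Between consecutive zeros $H(0, y, q)$ has definite alternating sign, and in particular on the non-empty sub-interval $(-\tfrac{1}{2} a_{j_{r+1}}(q)/a_{j_{r+2}}(q), -2 a_{j_r}(q)/a_{j_{r+1}}(q)) \subset (y_{r+1}^*, y_r^*)$ the sign is fixed. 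Since each $y_r^*$ is a simple zero, $\partial_y H(0, y_r^*, q) \neq 0$, and the implicit function theorem produces a smooth local branch of the mirror curve at each $(0, y_r^*)$.

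Second, for each $r \in \{0, 1, \ldots, k-2\}$ I would construct a sub-arc $\gamma_r$ of $W$ in one of the open half-strips $U_r := \{(x, y) : x \neq 0 \text{ on a prescribed side},\; y_{r+1}^* < y < y_r^*\}$, with endpoints $(0, y_r^*)$ and $(0, y_{r+1}^*)$. The idea is to fix an intermediate level $y_r^{\sharp}$ in the sub-interval above and apply the intermediate value theorem to the single-variable function $x \mapsto H(x, y_r^{\sharp}, q)$: show that $H(0, y_r^{\sharp}, q)$ has one definite sign, while for some $x_r$ on the prescribed side of $l_1$ the interior contribution $\sum_{m_j \geq 1} a_j(q) x_r^{m_j} (y_r^{\sharp})^{n_j}$ grows enough to flip the sign, by dominant-term analysis via Propositions \ref{p2.5.3} and \ref{p2.5.4} combined with the large-radius limit. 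This yields a point $p_r \in W \cap U_r$. Since $W$ is a nonsingular closed curve in $\mathbb{R}X_{P_\Sigma}$ near the large radius limit, the connected component of $W$ through $p_r$ inside the closed strip $\{y_{r+1}^* \leq y \leq y_r^*\}$ is an embedded arc; transversality of the IFT branches at $(0, y_r^*)$ and $(0, y_{r+1}^*)$ forces the arc to meet $l_1$ only at simple zeros of $H(0, \cdot, q)$ in $[y_{r+1}^*, y_r^*]$, namely $(0, y_r^*)$ and $(0, y_{r+1}^*)$, while a compactness/exclusion argument rules out escape through the horizontal edges $y = y_r^*$ or $y = y_{r+1}^*$. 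Concatenating $\gamma_0, \ldots, \gamma_{k-2}$ through the $(0, y_r^*)$ yields $u_i$; since each $\gamma_r$ lies in a quadrant adjacent to $l_1$ and the neighboring axes meet $l_1$ only at toric-fixed corners, which are separated from the bounded zero set $\{y_r^*\}$ under the large-radius normalization, $u_i$ meets no axis other than $l_1$.

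The main obstacle is the sign-change step of paragraph two: producing $x_r$ of the right sign and magnitude so that $H(x_r, y_r^{\sharp}, q)$ flips sign, while remaining within a compact region that forbids the resulting arc of $W$ from escaping toward a different axis. This is a delicate dominant-term competition between the edge monomials $a_{j_s}(q) y^s$ and the interior monomials $a_j(q) x^{m_j} y^{n_j}$ with $m_j \geq 1$; the non-constant-monomial structure of Propositions \ref{p2.5.3} and \ref{p2.5.4} should force the interior terms to dominate after a modest scaling of $x$ near the large radius limit, but pinning down the explicit $x_r$ and confining the arc's horizontal excursion within the strip is where the real bookkeeping lies.
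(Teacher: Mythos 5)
Your setup (normalizing so $l_i$ is the $y$-axis, locating the $d_i$ negative zeros via Theorem \ref{t5.3.1}, and working quadrant by quadrant between consecutive zeros) matches the paper, but the core of your construction has a genuine gap. You apply the intermediate value theorem in the $x$-direction only at a \emph{single} intermediate level $y_r^{\sharp}$, obtaining one point $p_r\in W$ off the axis, and then assert that the connected component of $W$ through $p_r$ in the closed strip $\{y_{r+1}^*\le y\le y_r^*\}$ is an embedded arc terminating at $(0,y_r^*)$ and $(0,y_{r+1}^*)$, with escape through the horizontal edges ruled out by an unspecified ``compactness/exclusion argument.'' That argument is exactly the hard part and is not supplied: nothing you have established prevents the branch of $W$ through $p_r$ from leaving the strip through $y=y_r^*$ at a point with $x\neq 0$, or from closing up into an oval inside the strip without reaching the axis at all. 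The paper avoids this entirely by running the IVT \emph{fiberwise over every} $y\in[s_r',s_{r+1}']$: it builds an explicit barrier curve $x=g(y)$, defined by $\max_c|a_{l_c}(q)\,g(y)^{m_{l_c}'}y^{n_{l_c}'}|=3$ over the lattice points $b_{l_c}$ directly connected to the origin of the chosen flag, proves $H<0$ on the barrier and $H>0$ on the axis for $y$ strictly between the zeros, and then defines the arc as the graph $(h(y),y)$ of the least-norm non-positive zero $h(y)\in[g(y),0]$. Being a graph over $y$, this arc automatically stays in the strip, hits $l_1$ exactly at the two endpoints (where $h=0$), and is bounded in $x$, so it meets no other axis.

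A second, related gap is in your sign-flip step itself. You invoke a ``dominant-term competition'' to make the interior monomials overwhelm $H(0,y_r^{\sharp},q)$, but for a one-sided bound you need all the dominant interior terms to push in the \emph{same} direction; otherwise they can cancel. The paper's argument hinges on the observation that, under the prescribed signs, every monomial $a_{l_c}(q)x^{m_{l_c}'}y^{n_{l_c}'}$ attached to a lattice point directly connected to the flag's origin is \emph{negative} on the relevant quadrant (since $m_{l_c}',n_{l_c}'$ cannot both be even in the finest triangulation), so that once the largest of them reaches $3$ in absolute value one gets $H\le 1-3+(\text{error})$, with the error controlled via Proposition \ref{p2.5.4} by non-constant monomials of $q$. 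Without this sign uniformity your IVT step is not justified. To repair your proof you would either have to import the paper's barrier construction (at which point you recover the paper's proof) or genuinely carry out the confinement argument you deferred.
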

\begin{proof}
Similarly as the proof of Theorem \ref{t5.3.1}, without loss of generality, we assume $l_1$ is the axis corresponding to the side $r_1$ of $P_{\Sigma}$ containing $b_3(0,0), b_{j_k}(0,k)$ as the two vertices, and other lattice points in $P_{\Sigma}$ outside $r_1$ has coordinate $(m,n)$ with $m>0$. 

We denote $k$ negative zeroes of $H|_{l_1}$ by $s_1$,..., $s_k$, where $s_r$ is the zero lies in $(-2\frac{a_{j_r}(q)}{a_{j_{r+1}}(q)},-\frac{1}{2}\frac{a_{j_r}(q)}{a_{j_{r+1}}(q)})$, and we assume $A_r=(0,s_r)$. Then we hope to construct an arc that only intersects $l_1$ with the points $A_r$ and $A_{r+1}$ and doesn't intersect with the other axes. 

Now let us construct the arc connecting $A_r$ and $A_{r+1}$. We choose a flag $(\tau,\sigma)$ which contains $b_{j_{r}}$ as the origin and makes $I_{\tau}'=\{j_r,j_{r+1}\}$. Consider the affine coordinate change under this flag \begin{align*}
    &X_{(\tau,\sigma)}=X^{a}Y^{b}\frac{a_w(q)}{a_{j_{r}}(q)}
    \\&Y_{(\tau,\sigma)}=Y\frac{a_{j_{r+1}}(q)}{a_{j_{r}}(q)}.
\end{align*} Then for $e=1,...,k$, under the coordinate $(X_{(\tau,\sigma)},Y_{(\tau,\sigma)})$, $A_e=(0,\frac{a_{j_{r}}(q)}{a_{j_{r+1}}(q)}s_e)$. We denote $\frac{a_{j_{r}}(q)}{a_{j_{r+1}}(q)}s_e$ by $s_e'$. Then all zeroes of $H_{(\tau,\sigma)}|_{X_{(\tau,\sigma)}=0}$ are $\{s_e'|e=1,..,k\}$. Because under the flag $(\tau,\sigma)$, $b_{j_r}$ has the coordinate $(0,0)$, and $b_{j_{r+1}}$ has the coordinate $(0,1)$ with $a_{j_r}'(q)=a_{j_{r+1}}'(q)=1$, we know that $s_{r+1}'\in (-2a_{j_{r+2}}'(q),-\frac{1}{2}a_{j_{r+2}}'(q))$, and $s_{r}'\in (-2 ,-\frac{1}{2})$. Now we only need to construct the arc under the coordinate $(X_{(\tau,\sigma)},Y_{(\tau,\sigma)})$. 

We consider all lattice points $b_{l_1}$, ..., $b_{l_w}$ connecting $b_{j_{r+1}}$ directly. Under the flag $(\tau,\sigma)$, $b_{l_c}$ must have the coordinate $(m_{l_c}',n_{l_c}')$ where $m_{l_c}'\ge 0$, and $m_{l_c}',n_{l_c}'$ could not be even at the same time for $c=1,...,w$. Then for any $c=1,..,w$, we have $(-1)^{m_{l_c}'+n_{l_c}'}a_{l_c}^{\sigma}(q)<0$. Thus $a_{l_c}^{\sigma}(q)x^{m_{l_c}'}y^{n_{l_c}'}<0$ when $x<0$ and $y<0$.

For $y\in [s_{r}',s_{r+1}']$, we define $g(y)$ to be the unique negative $s$ which makes $$max\{|a_{l_c}^{\sigma}(q)s^{m_{l_c}'}y^{n_{l_c}'}|\}=3.$$ Here the existence and uniqueness of $s$ are due to the fact $m_{l_c}'\ge0$, and at least one $c$ makes $m_{l_c}'>0$ which makes $$max\{|a_{l_c}^{\sigma}(q)s^{m_{l_c}'}y^{n_{l_c}'}|\}$$ increase monotonically strictly with respect to $|s|$ without an upper bound.
Then we define $A$ to be the index set of the lattice points which do not connect $b_{j_r}$ directly. Similarly as the point $(-1,-1)$ in Figure \ref{f20}, for any point $b_i(m_i',n_i')$, there exists a flag $(\tau_1,\sigma_1)$ containing $b_{j_r}$ as the origin, and the two coordinate components of $b_i$, denoted by $m_i''$ and $n_i''$, are both non-negative. For $y\in[s_{r}',s_{r+1}']$, because $y<0$ and $g(y)<0$, we have $$a_{l_c}^{\sigma}(q)g(y)^{m_{l_c}'}y^{n_{l_c}'}<0 \quad\forall c=1,...,w.$$ Then \begin{align*} H_{(\tau,\sigma)}(g(y),y,q)&=\sum\limits_{i=1}^{p+3}a_i^{\sigma}(q)g(y)^{m_i'}y^{n_i'}\\&=1+\sum\limits_{c=1}^w a_{l_c}^{\sigma}(q)g(y)^{m_{l_c}'}y^{n_{l_c}'}+\sum\limits_{i\in A}a_i^{\sigma}(q)g(y)^{m_i'}y^{n_i'}\\&<1-3+\sum\limits_{i\in A}|a_i^{\sigma}(q)g(y)^{m_i'}y^{n_i'}|.
\end{align*}

By Proposition \ref{p2.5.4}, for any $i$, there exists a non-constant monomial $b_i(q)$ and $c_1,c_2$, makes \begin{align*}
|a_i^{\sigma}(q)g(y)^{m_i'}y^{n_i'}|&=|b_i(q)(a_{l_{c_1}}(q)g(y)^{m_{c_1}'}y^{n_{c_1}'})^{m_i''}(a_{l_{c_2}}(q)g(y)^{m_{c_2}'}y^{n_{c_2}'})^{n_i''}|\\&\le 3^{m_i''+n_i''}|b_i(q)|\\&<3^{2D}|b_i(q)|.
\end{align*} Finally, near the large radius limit, we get $$H_{(\tau,\sigma)}(g(y),y,q)<-2+3^{2D}\sum\limits_{i\in A}|b_i(q)|<0.$$
However, because near the large radius limit, for $-\frac{1}{2}\in [s_r',s_{r+1}']$, it is obvious that $H_{(\tau,\sigma)}(0,-\frac{1}{2},q)>0$, we have proved $$H_{(\tau,\sigma)}(0,y,q)>0\quad \forall y\in(s_{r}',s_{r+1}').$$ By the intermediate value theorem, for any fixed $y\in [s_{r}',s_{r+1}']$, there exists $z\in(g(y),0]$ making $H_{(\tau,\sigma)}(z,y,q)=0$. Denote the non-positive zero of $H_{(\tau,\sigma)}(\quad,y)$ with the least norm by $h(y)$. Then $g(y)\le h(y)\le 0$ for all $y\in [s_r',s_{r+1}']$. Besides, $h(y)=0$ iff $y=s_r'$ or $y=s_{r+1}'$. Finally, because $H_{(\tau,\sigma)}(X,Y,q)$ is a Laurent polynomial with respect to $X,Y$, $h(y)$ is a smooth function with respect to $y\in [s_r',s_{r+1}']$. We have constructed an arc connecting $A_r$ and $A_{r+1}$ with the parameterization given by $(h(y),y) ,\quad y\in [s_r',s_{r+1}']$.   
\end{proof}

Now, it suffices to show there exists an arc connecting two adjacent axes. We have the following theorem.

\begin{theorem}
\label{t5.3.3}
Near the large radius limit, with $a_i(q)<0$ when $m_i$ and $n_i$ are both odd, and $a_i(q)>0$ when $m_i$ or $n_i$ is even, if $l_i$ and $l_{i+1}$ are two axes of $\mathbb{R}X_{P_{\Sigma}}$ corresponding to two adjacent sides $r_i$, $r_{i+1}$ in $P_{\Sigma}$, then there exists an arc of $W$ only intersecting with these two axes.
\end{theorem}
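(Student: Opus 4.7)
The plan is to realise the desired arc as a graph $(h(Y), Y)$ over an interval on one axis in the affine chart of $\mathbb{R}X_{P_\Sigma}$ at the vertex common to $r_i$ and $r_{i+1}$, using strict monotonicity of $H$ in the complementary variable. By Proposition \ref{p2.5.2} we may assume, after a flag change, that this shared vertex is $b_3 = (0,0)$, with $r_i$ the segment from $(0,0)$ to $(0,d_i)$ and $r_{i+1}$ the segment from $(0,0)$ to $(d_{i+1},0)$; then $P_\Sigma$ sits in the first quadrant and the mirror curve becomes the honest polynomial
\[H(X, Y, q) = 1 + X + Y + \sum_{i \ge 4} a_i(q) X^{m_i} Y^{n_i}\]
with $m_i, n_i \ge 0$. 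In the affine chart at $b_3$ the axes $l_i, l_{i+1}$ appear as $\{X=0\}, \{Y=0\}$, and by the proof of Theorem \ref{t5.3.1} the intersections of $W$ with them closest to the corner are $A_1 = (0, y_\ast)$ and $B_1 = (x_\ast, 0)$ with $x_\ast, y_\ast \in (-2, -\tfrac{1}{2})$.

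Fix a constant $c > 2$ chosen so that $R := [-c, 0]^2$ separates $\{A_1, B_1\}$ from the remaining axis-zeros of $H$; this is possible near the large radius limit because the next-closest axis-zeros lie near $-a_{j_r}(q)/a_{j_{r+1}}(q)$ for some $r \ge 1$, and these diverge to $-\infty$ by Proposition \ref{p2.5.3}. Since every $a_i(q)$ with $i \ge 4$ is a non-constant monomial of $q$ (Propositions \ref{p2.5.3} and \ref{p2.5.4}) and hence vanishes at $q = 0$, two uniform estimates on the compact set $R$ follow: first,
\[\partial_X H(X, Y, q) = 1 + \sum_{i \ge 4,\, m_i \ge 1} m_i a_i(q) X^{m_i - 1} Y^{n_i} = 1 + o(1),\]
so $\partial_X H > 0$ uniformly on $R$; and second, $H(-c, Y, q) \le 1 - c + Y + o(1) < 0$ uniformly in $Y \in [-c, 0]$. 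In particular, $X \mapsto H(X, Y, q)$ is strictly increasing on $[-c, 0]$ for each fixed $Y$.

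Given these, for each $Y \in [y_\ast, 0]$ we have $H(-c, Y, q) < 0$ and $H(0, Y, q) \ge 0$, with equality only at $Y = y_\ast$ by the choice of $c$. The intermediate value theorem together with strict monotonicity in $X$ yield a unique $h(Y) \in [-c, 0]$ with $H(h(Y), Y, q) = 0$; the implicit function theorem makes $h$ smooth on $(y_\ast, 0)$, and one reads off $h(y_\ast) = 0$, $h(0) = x_\ast$, and $h(Y) \in (-c, 0)$ for $Y \in (y_\ast, 0)$. The image $\{(h(Y), Y) : Y \in [y_\ast, 0]\}$ is therefore a continuous arc of the mirror curve joining $A_1$ to $B_1$, lying inside the affine chart at $b_3$; it meets $l_i$ only at $A_1$, $l_{i+1}$ only at $B_1$, and no other axis of $\mathbb{R}X_{P_\Sigma}$ (which do not intersect this chart). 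Because $A_1 \in W$ and the arc is connected, it is contained in $W$.

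The hardest step is the pair of uniform estimates on $R$, namely $\partial_X H > 0$ and $H|_{\{X = -c\}} < 0$; both reduce to the smallness of the non-constant monomials $a_i(q)$ at the large radius limit, which is the mechanism used throughout Section \ref{5}. A minor bookkeeping task is choosing $c$ so that $R$ isolates $A_1, B_1$ from the remaining axis-zeros, which is handled by the axis-zero bounds established in Theorem \ref{t5.3.1}.
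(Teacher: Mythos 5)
There is a genuine gap, and it sits exactly where the real difficulty of this theorem lies. Your normalization assumes that after a flag change the two adjacent sides $r_i$, $r_{i+1}$ can be placed along the two coordinate axes, i.e.\ that their primitive edge directions form a $\mathbb{Z}$-basis of $N'$. That is only true when the common vertex is a smooth corner of $P_{\Sigma}$, and it fails in the most basic examples: for local $\mathbb{P}^2$ the Newton polytope is the triangle with vertices $(1,0)$, $(0,1)$, $(-1,-1)$, and at each vertex the two edge directions have determinant $\pm 3$, so no $GL_2(\mathbb{Z})$ change of coordinates puts both sides on the axes. The affine equivalences of Proposition \ref{p2.5.2} only let you normalize \emph{one} side (and one triangle of $T_{\Sigma}$); the second side then has a general primitive direction $(w_1,w_2)$ with $w_1>0$, $w_2\le 0$, which is precisely the setup the paper's proof works in. Once $w_2<0$, the Laurent polynomial has negative exponents in $Y$, your box $R=[-c,0]^2$ and the estimate $\partial_X H=1+o(1)$ no longer make sense as stated, and---more importantly---the claim ``$h(0)=x_*$ lands on $l_{i+1}$'' has no meaning in the affine chart, because $l_{i+1}$ is not $\{Y=0\}$ there.

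The missing content is the endgame: showing that the arc, parametrized over $t\in[s,0)$ on the $l_i$ side, actually closes up on the boundary divisor $l_{i+1}$ of the (possibly singular) toric surface. The paper does this by proving $\lim_{t\to 0}h(t)^{w_1}t^{w_2}=u_1$, where $u_1$ is the least-norm zero of $H$ restricted to $l_{i+1}$, and then passing to homogeneous coordinates (with the fractional powers $t^{1/w_1}$) to extend the arc continuously to the point of $l_{i+1}$ over $u_1$. None of this is automatic and none of it appears in your argument. That said, in the special case of a unimodular corner your monotonicity device ($\partial_X H>0$ on a compact box, giving existence, uniqueness and smoothness of $h$ in one stroke) is cleaner than the paper's barrier function $g(t)$ defined by $\max_c|a_{l_c}(q)x^{m_{l_c}}t^{n_{l_c}}|=3$; it would be worth keeping as a simplification there, but it does not replace the limit analysis at a singular corner.
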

\begin{proof}
Still without loss of generality, we assume $l_i$ and $l_{i+1}$ are the axes corresponding to side $r_i$, $r_{i+1}$, where $r_i$ is the segment having the two vertices $b_3(0,0)$ and $b_{j_k}(0,k)$, and $r_{i+1}$ having the two vertices $b_3(0,0)$ and $b_{l_e}(ew_1,ew_2)$. Here $e$ is an integer, $w_1$ and $w_2$ are coprime, and $w_1>0$, $w_2\le 0$, as we show in Figure \ref{f23}.
\begin{figure}
    \centering
    \includegraphics[scale=0.4]{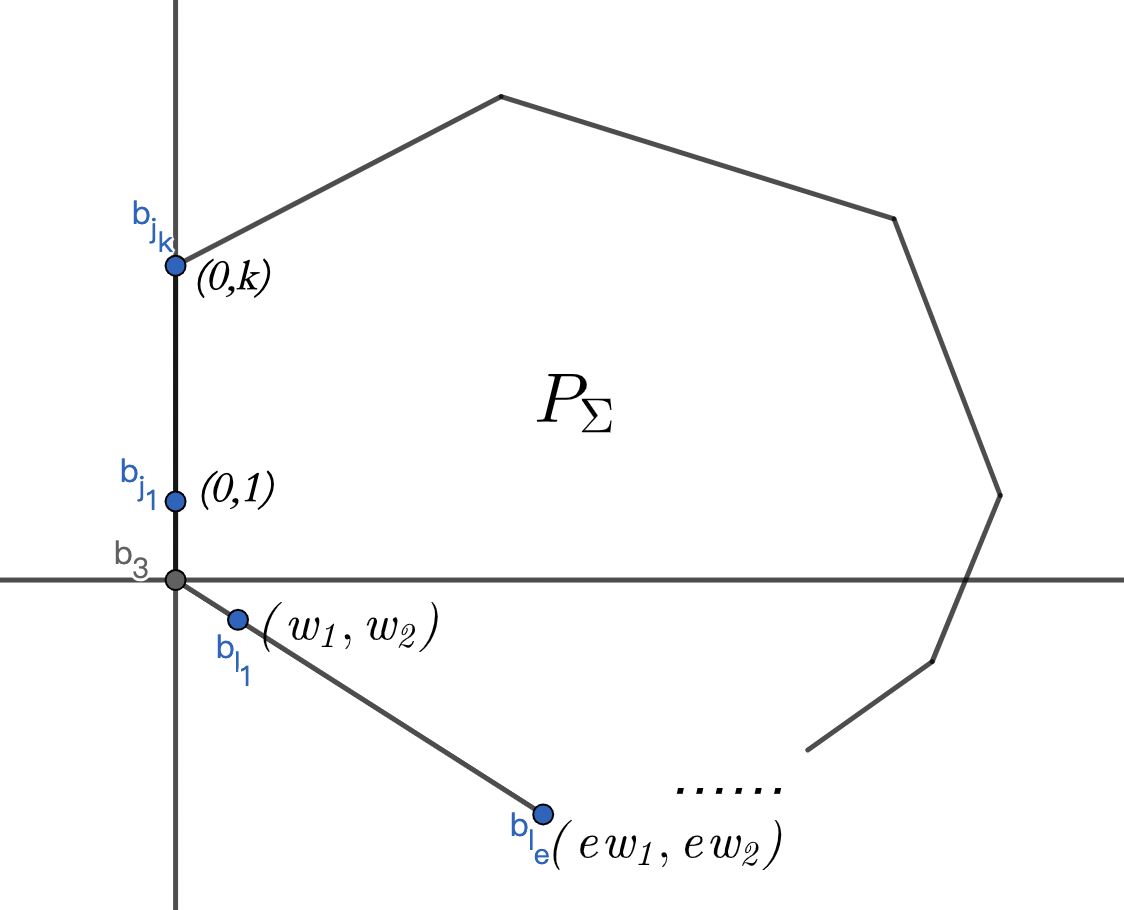}
    \caption{}
    \label{f23}
\end{figure}

Then $l_i$ is the y-axis, because $H|_{y-axis}=1+y+a_{j_2}(q)y^2+...+a_{j_k}(q)y^k$ has the zero with the least norm in $(-2,-\frac{1}{2})$ near the large radius limit. We denote this zero by $s$. Thus $H(0,t,q)>0$ for $s\le t<0$.

Next, we consider all lattice points $b_{l_1}$, ..., $b_{l_w}$ connecting $b_3$ directly, similarly, for any fixed $t\in [s,0)$, $$v(|x|)=max\{|a_{l_c}(q)x^{m_{l_c}}t^{n_{l_c}}|\}$$ increases monotonically strictly with $|x|$ increasing without an upper boundary. We still define $g(t)$ to be the unique negative $x$ which makes $$max\{|a_{l_c}(q)x^{m_{l_c}}t^{n_{l_c}}|\}=3.$$  
Similarly, if we define $A$ to be the index set that corresponds to all lattices which do not connect $b_3$ directly, we have \begin{align*} H(g(t),t,q)&=\sum\limits_{i=1}^{p+3}a_i(q)g(t)^{m_i}t^{n_i}\\&=1+\sum\limits_{c=1}^w a_{l_c}(q)g(t)^{m_{l_c}}t^{n_{l_c}}+\sum\limits_{i\in A}a_i(q)g(t)^{m_i}t^{n_i}\\&<1-3+\sum\limits_{i\in A}|a_i(q)g(t)^{m_i}t^{n_i}|
\end{align*}

However, similarly, by Proposition \ref{p2.5.4}, for any $i$, there exist a non-constant monomial $b_i(q)$ and $c_1,c_2$, making \begin{align*}
|a_i(q)g(t)^{m_i}t^{n_i}|&=|b_i(q)(a_{l_{c_1}}(q)g(t)^{m_{c_1}}t^{n_{c_1}})^{m_i'}(a_{l_{c_2}}(q)g(t)^{m_{c_2}}t^{n_{c_2}})^{n_i'}|\\&\le 3^{m_i'+n_i'}|b_i(q)|\\&<3^{2D}|b_i(q)|.
\end{align*} Finally, near the large radius limit, we get $$H(g(t),t,q)<-2+3^{2D}\sum\limits_{i\in A}|b_i(q)|<0.$$ Then by the intermediate value theorem, letting $h(t)$ be the non-positive zero of $H(\quad,t,q)$ with the least norm, we have $h(t)\in [g(t),0]$. Besides, $h(t)=0$ iff $t=s$.

Finally, we consider the arc $(h(t),t)$ with the parameter $t\in [s,0)$. Because $H(h(t),t,q)=0$ for all $t\in [s,0)$, then \begin{align*}
0&=\sum\limits_{i=1}^{p+3}a_i(q)h(t)^{m_i}t^{n_i}\\&=\sum\limits_{i=0}^{e}a_{l_i}(q)(h(t)^{w_1}t^{w_2})^i+\sum\limits_{j\ne l_i}a_j(q)h(t)^{m_j}t^{n_j}.
\end{align*}
Thus, for any $j\ne l_i$, there exist positive integer numbers $k_{1,j}$, $k_{2,j}$, $k_{3,j}$ which make $k_{1,j}(w_1,w_2)+k_{2,j}(0,1)=k_{3,j}(m_j,n_j)$. Then $$|a_j(q)h(t)^{m_j}t^{n_j}|=|a_j(q)||t|^{\frac{k_{2,j}}{k_{3,j}}}|h(t)^{w_1}t^{w_2}|^{\frac{k_{1,j}}{k_{3,j}}}.$$ 
Therefore, near the large radius limit, if we denote $e$ different zeroes of $$\sum\limits_{i=0}^{e}a_{l_i}(q)x^i$$ by $u_1$, ..., $u_e$ with $|u_1|<...<|u_e|$, where there exist $e$ different zeroes because we could see $$\sum\limits_{i=0}^{e}a_{l_i}(q)x^i$$ as $H$ restricted on axis $l_{i+1}$, we have \begin{align*}
0=H(h(t),t,q)&=\sum\limits_{i=0}^{e}a_{l_i}(q)(h(t)^{w_1}t^{w_2})^i+\sum\limits_{j\ne l_i}a_j(q)h(t)^{m_j}t^{n_j}\\&\le\sum\limits_{i=0}^{e}a_{l_i}(q)(h(t)^{w_1}t^{w_2})^i+\sum\limits_{j\ne l_i}|a_j(q)||t|^{\frac{k_{2,j}}{k_{3,j}}}|h(t)^{w_1}t^{w_2}|^{\frac{k_{1,j}}{k_{3,j}}}.
\end{align*}

Similarly, we could prove \begin{align*}-\sum\limits_{j\ne l_i}|a_j(q)||t|^{\frac{k_{2,j}}{k_{3,j}}}|h(t)^{w_1}t^{w_2}|^{\frac{k_{1,j}}{k_{3,j}}}&\le\sum\limits_{i=0}^{e}a_{l_i}(q)(h(t)^{w_1}t^{w_2})^i\\&\le\sum\limits_{j\ne l_i}|a_j(q)||t|^{\frac{k_{2,j}}{k_{3,j}}}|h(t)^{w_1}t^{w_2}|^{\frac{k_{1,j}}{k_{3,j}}}.
\end{align*}

Because $|h(t)^{w_1}t^{w_2}|\le \frac{3}{|a_{l_1}(q)|}$, take the limit of $t$ tending to $0$, we get $$lim_{t\to 0}\sum\limits_{i=0}^{e}a_{l_i}(q)(h(t)^{w_1}t^{w_2})^i=0.$$ Finally, because $h(t)$ corresponds to the zero with the least norm, we have $$lim_{t\to 0}h(t)^{w_1}t^{w_2}=u_1.$$ If we write such an arc by the homogeneous coordinate, for the parameter $t\in [s,0)$, the arc is given as $(1,1,....,h(t)*t^{\frac{w_2}{w_1}},t^{\frac{1}{w_1}},...,1)$. Here we choose a fixed branch of $x^{\frac{1}{w_1}}$. Then $$lim_{t\to 0}h(t)t^{\frac{w_2}{w_1}}=u_1^{\frac{1}{w_1}}.$$ Because for any branch of $x^{\frac{1}{w_1}}$, we have $(1,1,...,u_1^{\frac{1}{w_1}},0,1,...,1)$ corresponding to the same point in the toric surface (one could see this by considering the group action on $\mathbb{C}^{p+3}$), we could naturally extend the arc with the parameter $t\in[s,0]$. Besides, such an extended arc only intersects with the axes $l_i$, $l_{i+1}$.
\end{proof}

With the previous three theorems, we have shown the corollary given below.
\begin{corollary}
\label{cr5.4}
Near the large radius limit, with $a_i(q)<0$ when $m_i$ and $n_i$ are both odd, and $a_i(q)>0$ when $m_i$ or $n_i$ is even, the mirror curve is a cyclic M-curve.
\end{corollary}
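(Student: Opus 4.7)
The plan is to verify, condition by condition, the three requirements in Definition \ref{def:cyclic-M curve} using the four results immediately preceding the corollary. First, Corollary \ref{c5.2.3} already gives that, under the prescribed sign pattern and near the large radius limit, $H=0$ is an M-curve, so $\mathbb{R}\bar A$ has exactly $g+1$ components on $\mathbb{R}X_{P_\Sigma}$, where $g=\#(\mathrm{Int}(P_\Sigma)\cap\mathbb{Z}^2)$.

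For condition (1) of the definition (only one component meets the axes), I would invoke the construction from Theorem \ref{t5.2.1}: each interior lattice point $b_i$ has a domain component $W_{b_i}$ contained in the compact set $C_{b_i}\subset(\mathbb{R}^*)^2$, and Lemma \ref{l5.2.2} locates $W_{b_i}$ inside a specific open quadrant bounded away from the axes. Since these $g$ components account for $g$ of the $g+1$ components of $\mathbb{R}\bar A$ and all avoid $\bigcup_i l_i$, the remaining component---which is already known to intersect the axes by the intermediate value argument at the start of Section \ref{5.2}---is unique; denote it $W$. This is the component $W_p$ demanded by the definition.

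For condition (2), Theorem \ref{t5.3.1} says that $W$ intersects each axis $l_i$ in exactly $d_i$ points. Theorem \ref{t5.3.2} then produces, for each $i$, an explicit arc $c_i\subset W$ passing through all $d_i$ of those intersection points and avoiding every other axis. Since $\#(W\cap l_i)=d_i$ and $c_i$ already contains $d_i$ points of $W\cap l_i$, the arc $c_i$ exhausts $W\cap l_i$. The $c_i$'s are pairwise disjoint because each $c_i$ stays within a neighbourhood of its own axis and meets no other $l_j$.

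Finally, for condition (3) (matching cyclic orders), Theorem \ref{t5.3.3} furnishes, for every pair of cyclically adjacent sides $r_i,r_{i+1}$ of $P_\Sigma$, an arc $e_{i,i+1}\subset W$ whose two endpoints lie on $l_i$ and $l_{i+1}$ respectively and which meets no other axis. Since $W$ is a single embedded topological circle in $\mathbb{R}X_{P_\Sigma}$, and the arcs $c_1,e_{1,2},c_2,e_{2,3},\dots,c_n,e_{n,1}$ are pairwise interior-disjoint with endpoints exhausting $W\cap\bigcup_i l_i$, they concatenate into all of $W$ in the prescribed cyclic order, which is precisely the cyclic order of the $r_i$ on $\partial P_\Sigma$. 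The most delicate step will be this last bookkeeping: arguing that $W$ is topologically a circle (which comes from non-singularity of the mirror curve near the large radius limit, ensuring smoothness of $\bar A$), and that the $c_i$'s and $e_{i,i+1}$'s jointly and without overlap cover $W$ in exactly the cyclic order dictated by $P_\Sigma$; everything else is an immediate bundling of the previous three theorems. Once this is verified, all three axioms of Definition \ref{def:cyclic-M curve} hold, so the mirror curve is a cyclic-M curve, which is the content of Corollary \ref{cr5.4}.
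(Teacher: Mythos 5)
Your proposal is correct and follows essentially the same route as the paper, which offers no separate argument for Corollary \ref{cr5.4} beyond the remark that it follows from Corollary \ref{c5.2.3} and Theorems \ref{t5.3.1}, \ref{t5.3.2}, and \ref{t5.3.3}. Your assembly of those ingredients (using the domain components of Theorem \ref{t5.2.1} and Lemma \ref{l5.2.2} to isolate the unique unbounded component, and then checking the three conditions of Definition \ref{def:cyclic-M curve} in turn) is in fact more explicit about the final cyclic-order bookkeeping than the paper itself.
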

With the corollary, we could give a refinement of the topology of the mirror curve near the large radius limit. Because we have already calculated the tropicalization of the mirror curve near the large radius limit which is dual to the subdivision $T_{\Sigma}$ given by $\Sigma$. For any bounded 2-cell $C$ in the subdivision given by the tropicalization, there exists an oval contains in $C$. Besides, for unbounded 2-cell $C'$, there exists an unbounded convex half circle contains in $C'$, as we show in Figure \ref{f24}.
\begin{figure}
    \centering
    \includegraphics[scale=0.4]{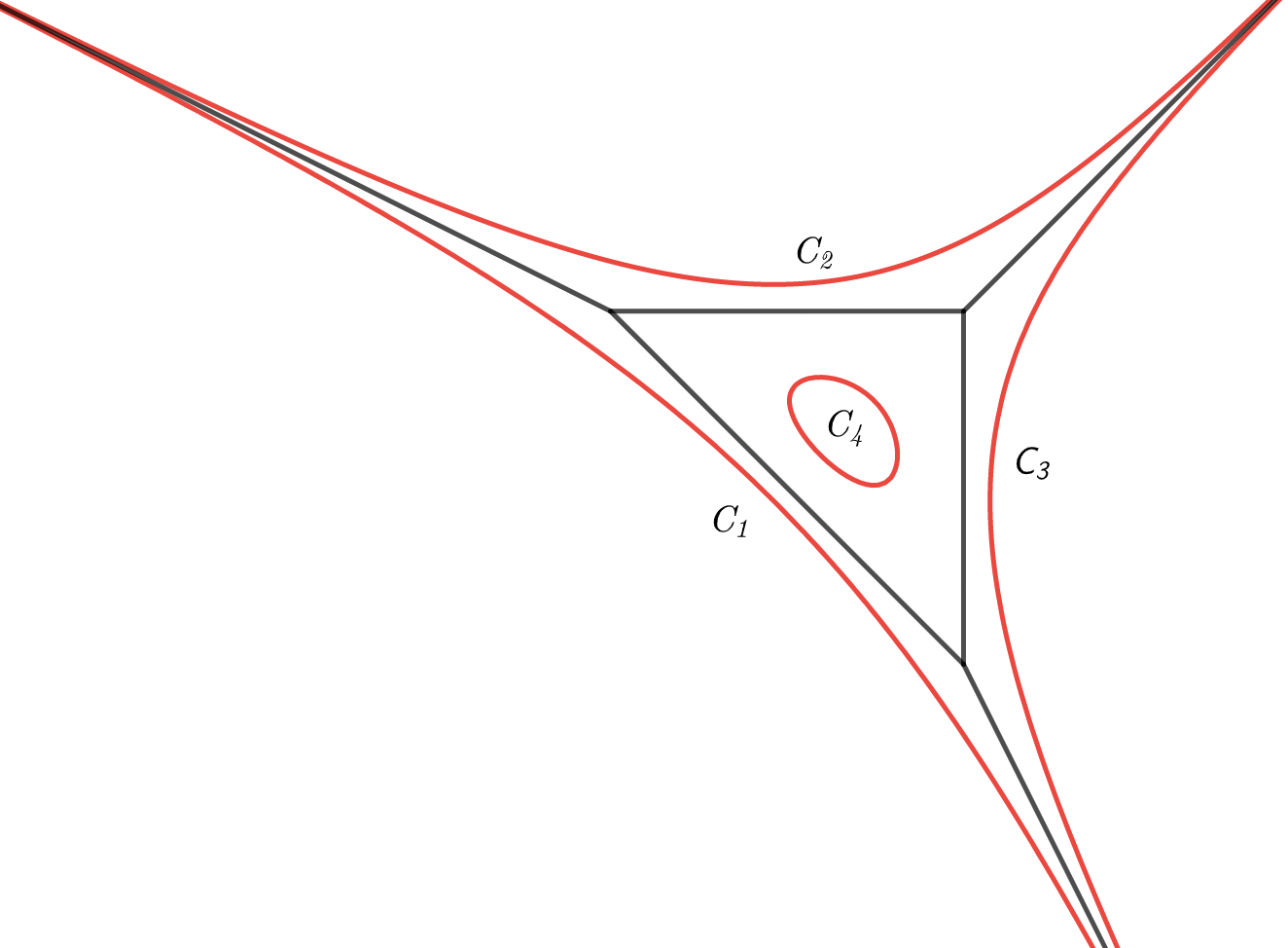}
    \caption{}
    \label{f24}
\end{figure}

Then because the amoeba map is an embedding on the boundary of the amoeba and 2-1 in the interior of the amoeba, we could show that the mirror curve under such special coefficients is glued by tubes and pairs of pants. Any vertex corresponds to a pair of pants, and any edge corresponds to a tube, as we show in Figure \ref{f25} and Figure \ref{f26}. Finally, because $q$ is a flat family of parameters\cite{fang2020remodeling} near the large radius limit, we have seen that, for a general $q$, the mirror curve is glued from tubes and pairs of pants, and these tubes and pairs of pants move smoothly with the change of the parameter $q\in (\mathbb{C^*})^p$ near the large radius limit. 

\begin{aremark}This result is only a refinement because the topological type of a non-singular algebraic curve on a toric surface is uniquely determined by its genus. However, near the large radius limit, we have shown how to glue these tubes and pairs of pants concretely. This gives us the local way to see the topology of the mirror curve.
\end{aremark}

\begin{figure}[htbp]
\centering
\begin{minipage}[t]{0.48\textwidth}
\centering
\includegraphics[width=6cm]{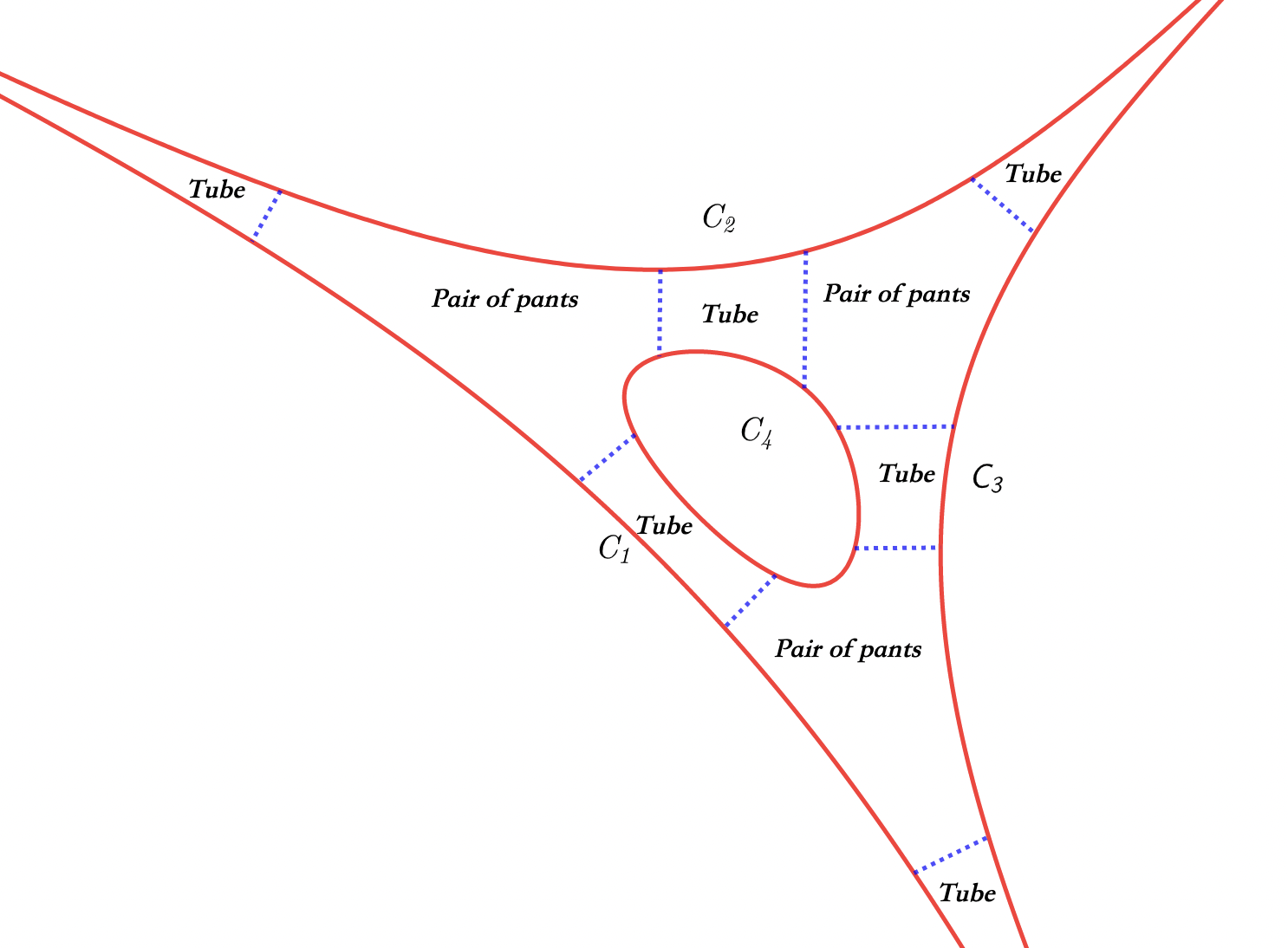}
\caption{$1+x+y-0.01x^{-1}y^{-1}=0$}
\label{f25}
\end{minipage}
\begin{minipage}[t]{0.48\textwidth}
\centering
\includegraphics[width=6cm]{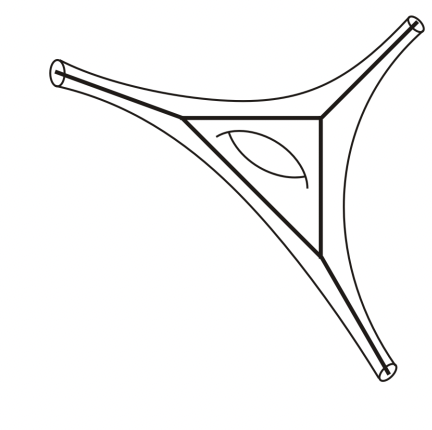}
\caption{Topology of Mirror curve}
\label{f26}
\end{minipage}
\end{figure}

\bibliography{ref}
\end{document}